\let\today\relax
\def\ps@pprintTitle{%
    \let\@oddhead\@empty
    \let\@evenhead\@empty
    \def\@oddfoot{\footnotesize\itshape
         {} \hfill\today}%
    \let\@evenfoot\@oddfoot
    }
\newtheorem{theorem}{Theorem}
\newtheorem{lemma}{Lemma}
\newtheorem{corollary}{Corollary}
\journal{Journal of Multivariate Analysis}
\begin{document}

\begin{frontmatter}



\title{Asymptotic testing of covariance separability for matrix elliptical data} 


\author[1]{Joni Virta\corref{mycorrespondingauthor}}
\author[2,3]{Takeru Matsuda}

\address[1]{Department of Mathematics and Statistics, University of Turku}

\address[2]{Graduate School of Information Science and Technology, University of Tokyo}

\address[3]{Statistical Mathematics Unit, RIKEN Center for Brain Science}

\cortext[mycorrespondingauthor]{Corresponding author. Email address: \url{joni.virta@utu.fi}}

\begin{abstract}
We propose a new asymptotic test for the separability of a covariance matrix. The null distribution is valid in wide matrix elliptical model that includes, in particular, both matrix Gaussian and matrix $t$-distribution. The test is fast to compute and makes no assumptions about the component covariance matrices. An alternative, Wald-type version of the test is also proposed. Our simulations reveal that both versions of the test have good power even for heavier-tailed distributions and can compete with the Gaussian likelihood ratio test in the case of normal data.
\end{abstract}



\begin{keyword}
asymptotic test \sep elliptical distribution \sep Kronecker structure \sep matrix data \sep separability


\end{keyword}

\end{frontmatter}




\section{Introduction}\label{sec:intro}

Let $(\Omega, \mathcal{F}, \mathbb{P})$ be a probability space. Let $X_1, \ldots, X_n$ be a random sample of $p_1 \times p_2$ matrices, let $\mathrm{vec}(\cdot)$ denote the column-wise vectorization operator and let $\mathcal{S}_+^{p}$ be the set of $p \times p$ positive definite matrices. The analysis of matrix-valued data is commonly equipped with the assumption that the covariance matrix $\Sigma \in \mathcal{S}_+^{p_1 p_2}$ of $\mathrm{vec}(X)$ is \textit{separable}. That is, one assumes that $\Sigma = \Sigma_2 \otimes \Sigma_1$ for some positive definite matrices $\Sigma_1 \in \mathcal{S}_+^{p_1}$ and $\Sigma_2 \in \mathcal{S}_+^{p_2}$, where $\otimes$ is the Kronecker product. This structural assumption is natural in that $\Sigma_1$ and $\Sigma_2$ capture the dependency structures of the rows and columns of $X$, respectively.  

However, the separability assumption is also very strict, the representation $\Sigma_2 \otimes \Sigma_1$ admitting only $p_1(p_1 + 1)/2 + p_2(p_2 + 1)/2 - 1$ degrees of freedom versus the $p_1 p_2(p_1 p_2 + 1)/2$ in the fully unstructured covariance $\Sigma$. As such, before any analyses, it makes sense to carry out a test for the null hypothesis
\begin{align*}
    H_0: \mathrm{Cov} \{ \mathrm{vec}(X) \} = \Sigma_2 \otimes \Sigma_1 \quad \mbox{for some } \Sigma_1 \in \mathcal{S}_+^{p_1}, \Sigma_2 \in \mathcal{S}_+^{p_2}.
\end{align*}
The main contribution of the current work is to construct a test statistic for $H_0$ and derive its asymptotic null distribution. We work under the assumption that the sample $X_1, \ldots, X_n$ is drawn i.i.d. from the matrix-variate elliptical model $P(M, \Sigma_1, \Sigma_2, F)$ defined as follows. For fixed $p_1, p_2 \in \mathbb{Z}_+$, $M \in \mathbb{R}^{p_1 \times p_2}$ and $\Sigma_1 \in \mathcal{S}_+^{p_1}$, $\Sigma_2 \in \mathcal{S}_+^{p_2}$, we let $P(M, \Sigma_1, \Sigma_2, F)$ denote the probability distribution of the matrix
\begin{align}\label{eq:main_model}
    X := M + \Sigma_1^{1/2} Z \Sigma_2^{1/2},
\end{align}
where the $p_1 \times p_2$ random matrix $Z \sim F$ and $F$ is assumed to be continuous and matrix spherical, i.e., $O_1 Z O_2^{\top} \sim Z$ for all orthogonal matrices $O_1 \in \mathbb{R}^{p_1 \times p_1}$ and $O_2 \in \mathbb{R}^{p_2 \times p_2}$, see \cite{gupta2018matrix}. Further, we also assume that $F$ admits a finite fourth moment, that is, $\mathrm{E} \| Z \|_F^4 < \infty$, where $\| \cdot \|_F$ is the Frobenius norm. Theorem \ref{theo:fisher_consistency_1} later states that $H_0$ is indeed satisfied under \eqref{eq:main_model}. We also note that the scales of the parameters $\Sigma_1, \Sigma_2$ are confounded in the sense that $P(M, \Sigma_1, \Sigma_2, F)$ and $P(M, \lambda \Sigma_1, \lambda^{-1} \Sigma_2, F)$, $\lambda > 0$, are both the same model. The model $P(M, \Sigma_1, \Sigma_2, F)$ is a matrix-variate extension of the family of elliptical distributions commonly used in multivariate analysis \citep{fang1990symmetric}.

We base our testing of $H_0$ on estimating $\mathrm{Cov} \{ \mathrm{vec}(X) \}$ in both a separable and non-separable manner and comparing the two estimators in a suitable way. For the separable estimator we take the maximum likelihood estimator of the matrix normal distribution, which is defined as any matrices $S_{1n} \in \mathcal{S}_+^{p_1}$ and $S_{2n} \in \mathcal{S}_+^{p_2}$ such that
\begin{align}\label{eq:sample_structured}
\begin{split}
    S_{1n} =& \frac{1}{n p_2} \sum_{i = 1}^n (X_i - \Bar{X}) S_{2n}^{-1} (X_i - \Bar{X})^{\top},\\
    S_{2n} =& \frac{1}{n p_1} \sum_{i = 1}^n (X_i - \Bar{X})^{\top} S_{1n}^{-1} (X_i - \Bar{X}),
\end{split}
\end{align}
where $\Bar{X}=(1/n)\sum_{i=1}^n X_i$.
This estimator has been studied extensively in the context of matrix normal distribution, see, e.g., \cite{werner2008estimation, manceur2013maximum, drton2021existence, franks2021near, hoff2023core}, and we later show that $S_{1n}$ and $S_{2n}$ estimate $\Sigma_1$ and $\Sigma_2$ up to scale, respectively, also under the wider matrix elliptical model~\eqref{eq:main_model}. As with the covariance parameters, also the estimator pair $S_{1n}, S_{2n}$ is defined only up to scale in the sense that for any solution to \eqref{eq:sample_structured}, also the pair $\lambda S_{1n}, \lambda^{-1} S_{2n}$, $\lambda > 0$, is a solution. As such we standardize the solutions via determinants to $S_{1n}/|S_{1n}|^{1/p_1}$, $S_{2n}/|S_{2n}|^{1/p_2}$.

As our non-separable estimator we take the sample covariance matrix of the vectorized sample,
\begin{align*}
    S_n := \frac{1}{n} \sum_{i = 1}^n \mathrm{vec}(X_i - \Bar{X}) \mathrm{vec}(X_i - \Bar{X})^{\top},
\end{align*}
which completely ignores any possible structure in $\Sigma$, and which we also standardize by its determinant. Consequently, under the separable model \eqref{eq:main_model}, both of the matrices
\begin{align*}
    \frac{S_{2n}}{| S_{2n} |^{1/p_2}} \otimes \frac{S_{1n}}{| S_{1n} |^{1/p_1}} \quad \mbox{and} \quad \frac{S_{n}}{| S_{n} |^{1/(p_1 p_2)}}
\end{align*}
estimate the same quantity $\Theta := ( \Sigma_{2}/|\Sigma_{2}|^{1/p_2} ) \otimes ( \Sigma_{1}/|\Sigma_{1}|^{1/p_1} ) $. As such, a natural test statistic for the null hypothesis $H_0$ of separability is $ \| V_n - I_{p_1 p_2} \|_F^2 $, where
\begin{align}\label{eq:vn_definition}
     V_n := \left( \frac{S_n}{|S_n|^{1/(p_1 p_2)}} \right)^{-1/2} \left( \frac{S_{2n}}{| S_{2n} |^{1/p_2}} \otimes \frac{S_{1n}}{| S_{1n} |^{1/p_1}} \right) \left( \frac{S_n}{|S_n|^{1/(p_1 p_2)}} \right)^{-1/2},
\end{align}
and $\| \cdot \|_F$ denotes the Frobenius norm. In case $H_0$ is violated, the separable and non-separable estimator estimate different quantities and we expect $V_n$ to differ from $I_{p_1 p_2}$, making large values of the test statistic signify departures from $H_0$. The main theoretical result of this work concerns the limiting null distribution of this test statistic and is as follows.

\begin{theorem}\label{theo:main_limiting}
    Under $P(M, \Sigma_1, \Sigma_2, F)$, the limiting distribution of $n \| V_n - I_{p_1 p_2} \|_F^2$, as $n \rightarrow \infty$, is
    \begin{align*}
         \frac{ \mathrm{E}(z_{11}^2 z_{22}^2) + \mathrm{E}(z_{11}^2 z_{12}^2)}{\{ \mathrm{E}(z_{11}^2) \}^2} \chi^2_{\frac{1}{4}(p_1 + 2)(p_1 - 1)(p_2 + 2)(p_2 - 1)} + \frac{3 \mathrm{E}(z_{11}^2 z_{22}^2) - \mathrm{E}(z_{11}^2 z_{12}^2)}{\{ \mathrm{E}(z_{11}^2) \}^2} \chi^2_{\frac{1}{4} p_1 p_2 (p_1 - 1) (p_2 - 1)},
    \end{align*}
    where the two $\chi^2$-variates are independent.
\end{theorem}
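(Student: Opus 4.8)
The plan is to pass to a canonical model, linearise $V_n$ around $I_{p_1 p_2}$, and identify the resulting Gaussian quadratic form through the symmetry of the matrix elliptical family. \textbf{Reduction.} Since $V_n = \widehat\Theta_{\mathrm{un}}^{-1/2}\widehat\Theta_{\mathrm{sep}}\widehat\Theta_{\mathrm{un}}^{-1/2}$ --- writing $\widehat\Theta_{\mathrm{un}} := S_n/|S_n|^{1/(p_1 p_2)}$ and $\widehat\Theta_{\mathrm{sep}} := (S_{2n}/|S_{2n}|^{1/p_2})\otimes(S_{1n}/|S_{1n}|^{1/p_1})$ --- it has the same eigenvalues $\lambda_1,\dots,\lambda_{p_1p_2}$ as $\widehat\Theta_{\mathrm{un}}^{-1}\widehat\Theta_{\mathrm{sep}}$, and $\|V_n - I_{p_1p_2}\|_F^2 = \sum_j(\lambda_j - 1)^2$. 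Replacing $X_i$ by $\Sigma_1^{-1/2}(X_i - M)\Sigma_2^{-1/2}$ transforms $S_n$, $S_{1n}$, $S_{2n}$ equivariantly (for $S_{1n},S_{2n}$ this follows from the structure of \eqref{eq:sample_structured}), so after the determinant standardisations both $\widehat\Theta_{\mathrm{un}}$ and $\widehat\Theta_{\mathrm{sep}}$ are multiplied by the same scalar and conjugated by the same matrix $\Sigma_2^{1/2}\otimes\Sigma_1^{1/2}$; hence the eigenvalues of $\widehat\Theta_{\mathrm{un}}^{-1}\widehat\Theta_{\mathrm{sep}}$, and with them the test statistic, are invariant. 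It therefore suffices to treat $X_i = Z_i$, and I put $\sigma^2 := \mathrm{E}(z_{11}^2)$, so $\mathrm{Cov}\{\mathrm{vec}(X)\} = \sigma^2 I_{p_1p_2}$ by sphericity. The cases $\min(p_1,p_2)=1$ are trivial (then one of the two degrees of freedom is $0$ and $H_0$ holds automatically), so I also assume $p_1,p_2\ge 2$.

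\textbf{Joint asymptotics --- the main obstacle.} Let $G := \lim_n \sqrt n(S_n - \sigma^2 I_{p_1p_2})$, a centred Gaussian symmetric matrix furnished by the ordinary CLT for the sample covariance of $\mathrm{vec}(X)$ (centring at $\bar X$ is asymptotically negligible). Using the consistency of $S_{1n}, S_{2n}$ established earlier together with a first-order expansion of \eqref{eq:sample_structured}, I then show that the coupling term in the flip--flop equations is, modulo $o_{\mathbb P}(n^{-1/2})$, a multiple of $I_{p_1}$ --- here one uses that $\mathrm{E}(ZAZ^\top) = \mathrm{tr}(A)I_{p_1}$ for deterministic $A$, again by sphericity --- and is hence killed by the determinant standardisation. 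Thus $S_{1n}/|S_{1n}|^{1/p_1}$ coincides, up to $o_{\mathbb P}(n^{-1/2})$, with the determinant-standardised partial trace $\mathcal P_1(S_n) := p_2^{-1}\sum_{k=1}^{p_2}[S_n]_{kk}$, where $[S_n]_{kk}$ is the $k$-th diagonal $p_1\times p_1$ block of $S_n$ viewed as a $p_2\times p_2$ array of such blocks; symmetrically, $S_{2n}/|S_{2n}|^{1/p_2}$ reduces to the standardised partial trace $\mathcal P_2(S_n)$ over the other index. Carrying this out rigorously --- the consistency argument, the uniform control in the Taylor expansion, and the resulting joint Gaussian limit --- is the step I expect to be the most technical.

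\textbf{Linearisation.} With $H := \sigma^{-2}G$ and $d := p_1p_2$, the delta method applied to $S \mapsto S/|S|^{1/d}$ at $\sigma^2 I_d$, applied to $S_n$, $S_{1n}$, $S_{2n}$, together with expansion of the Kronecker product, gives
\[
\sqrt n\,(V_n - I_{p_1p_2}) = \sqrt n\,(\widehat\Theta_{\mathrm{sep}} - \widehat\Theta_{\mathrm{un}}) + o_{\mathbb P}(1) \ \longrightarrow\ -Q(H),
\]
where $Q(H) := H - I_{p_2}\otimes\mathcal P_1(H) - \mathcal P_2(H)\otimes I_{p_1} + d^{-1}\mathrm{tr}(H)\,I_{p_1p_2}$, so that $n\|V_n - I_{p_1p_2}\|_F^2 \to \|Q(H)\|_F^2$. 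Checking $Q$ on Kronecker products $B\otimes A$ shows that $Q$ is the orthogonal projection of the space of symmetric $p_1p_2\times p_1p_2$ matrices onto $(\mathcal T_2\otimes\mathcal T_1)\oplus(\mathcal A_2\otimes\mathcal A_1)$, where $\mathcal T_j$ is the space of traceless symmetric $p_j\times p_j$ matrices and $\mathcal A_j$ the space of antisymmetric $p_j\times p_j$ matrices. In particular $\mathrm{rank}(Q) = (\tfrac12 p_1(p_1+1)-1)(\tfrac12 p_2(p_2+1)-1) + \binom{p_1}{2}\binom{p_2}{2} = \tfrac14(p_1+2)(p_1-1)(p_2+2)(p_2-1) + \tfrac14 p_1 p_2(p_1-1)(p_2-1)$, which already accounts for the two degrees of freedom appearing in the theorem.

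\textbf{Identification of the limiting law.} Because $Z$ is matrix spherical, the covariance operator of $\mathrm{vec}(H)$ commutes with every conjugation $M \mapsto (O_2\otimes O_1)M(O_2\otimes O_1)^\top$ with $O_1\in\mathbb R^{p_1\times p_1}$, $O_2\in\mathbb R^{p_2\times p_2}$ orthogonal. The two summands $\mathcal T_2\otimes\mathcal T_1$ and $\mathcal A_2\otimes\mathcal A_1$ are non-isomorphic irreducible representations of $O(p_1)\times O(p_2)$, so by Schur's lemma the covariance of $H$ restricts to a scalar multiple of the identity on each; consequently $Q(H)$ is the sum of two independent isotropic centred Gaussians on these subspaces and $\|Q(H)\|_F^2$ is a weighted sum of two independent $\chi^2$ variables with the stated degrees of freedom. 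The two weights are the per-coordinate variances, obtained as $\mathrm{vec}(M)^\top\mathrm{Cov}\{\mathrm{vec}(H)\}\mathrm{vec}(M)/\|M\|_F^2$ at the representatives $M = (f_1 f_1^\top - f_2 f_2^\top)\otimes(e_1 e_1^\top - e_2 e_2^\top)$ for the first summand and $M = (f_1 f_2^\top - f_2 f_1^\top)\otimes(e_1 e_2^\top - e_2 e_1^\top)$ for the second; this reduces to a short fourth-moment computation that invokes the sphericity identities $\mathrm{E}(z_{11}^4) = 3\mathrm{E}(z_{11}^2 z_{12}^2) = 3\mathrm{E}(z_{11}^2 z_{21}^2)$ and $\mathrm{E}(z_{11}z_{12}z_{21}z_{22}) = \tfrac12\{\mathrm{E}(z_{11}^2 z_{12}^2) - \mathrm{E}(z_{11}^2 z_{22}^2)\}$, and produces the weights $\{\mathrm{E}(z_{11}^2 z_{22}^2) + \mathrm{E}(z_{11}^2 z_{12}^2)\}/\{\mathrm{E}(z_{11}^2)\}^2$ and $\{3\mathrm{E}(z_{11}^2 z_{22}^2) - \mathrm{E}(z_{11}^2 z_{12}^2)\}/\{\mathrm{E}(z_{11}^2)\}^2$, completing the identification.
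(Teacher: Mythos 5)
Your argument is correct, and the decisive step is done by a genuinely different route than the paper's. The early part of your proposal (affine-invariant reduction, linearisation of the determinant-scaled estimators, and the observation that the flip--flop coupling term is asymptotically a multiple of the identity and is annihilated by the trace/determinant standardisation) is essentially the paper's own path through its Lemma~\ref{lem:det_expansion} and Theorems~\ref{theo:bahadur_1}--\ref{theo:bahadur_5}; you are right to flag it as the technical bottleneck, and note that the $\sqrt{n}$-rate of $(S_{1n},S_{2n})$ does not follow from consistency alone --- in the paper it is extracted from the linear fixed-point system itself (equation~\eqref{eq:h1n_expansion}), so your Taylor expansion must be organised the same way. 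Where you genuinely diverge is the identification of the limit law. The paper first computes the full fourth-moment matrix $A$ of a matrix spherical $Z$ (Theorem~\ref{theo:fourth_moment}) and then verifies by explicit Kronecker algebra that $\Omega^{1/2}B\Omega^{1/2}=(m_4+m_2)G_1BG_1+(3m_4-m_2)G_2BG_2$ with $G_1BG_1,G_2BG_2$ mutually orthogonal projections of the stated ranks. You instead recognise the limiting linear map as the orthogonal projection of the symmetric matrices onto $(\mathcal{T}_2\otimes\mathcal{T}_1)\oplus(\mathcal{A}_2\otimes\mathcal{A}_1)$, invoke equivariance of the asymptotic covariance under conjugation by $O_2\otimes O_1$, and apply Schur's lemma to these two non-isomorphic irreducibles of $O(p_1)\times O(p_2)$ (non-isomorphic since $(p_j+2)(p_j-1)/2\neq p_j(p_j-1)/2$), so that the covariance is automatically scalar on each block with no cross-covariance. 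This buys you three things: the degrees of freedom become transparent dimension counts; the full matrix $A$ is never needed, only the two scalars; and those scalars are obtained by evaluating $\operatorname{Var}\{\mathrm{vec}(Z)^{\top}M\,\mathrm{vec}(Z)\}/\|M\|_F^2$ at one representative per block --- I checked that your representatives give $4(m_2+m_4)/4$ and $4(3m_4-m_5\text{-corrected})/4=3m_4-m_2$ via the identities of Lemma~\ref{lem:moment_connections}, matching the stated weights. The paper's computation, while heavier, yields Theorem~\ref{theo:fourth_moment} as a reusable by-product (needed elsewhere, e.g.\ for Lemma~\ref{lem:m2_and_m4} and the Wald statistic), which your shortcut does not provide.
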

The limiting distribution in Theorem \ref{theo:main_limiting} depends on the distribution $F$ only through the three moments, $\mathrm{E}(z_{11}^2 z_{22}^2)$, $\mathrm{E}(z_{11}^2 z_{12}^2)$ and $\mathrm{E}(z_{11}^2)$, and later in Section \ref{sec:h0_test} we show how to estimate these. In the special case of matrix normal distribution, for which the elements of $Z$ are i.i.d. standard normal, the two coefficients in Theorem \ref{theo:main_limiting} both equal 2 and the limiting distribution takes the simple form
\begin{align*}
    2 \chi^2_{\frac{1}{2} (p_1^2 - 1) (p_2^2 - 1) + \frac{1}{2} (p_1 - 1) (p_2 - 1)}.
\end{align*}

As an alternative approach, we consider also a Wald-type version of the test statistic, $n \mathrm{vec}(V_n - I_{p_1 p_2})' \hat{\Upsilon} \mathrm{vec}(V_n - I_{p_1 p_2})$, where $\hat{\Upsilon}$ is an estimate of the Moore-Penrose pseudoinverse of the asymptotic covariance matrix of $\sqrt{n} \mathrm{vec}(V_n - I_{p_1 p_2})$. When the data comes from the separable model $P(M, \Sigma_1, \Sigma_2, F)$, this statistic is shown to have a limiting chi-squared distribution with degrees of freedom $\frac{1}{2} (p_1^2 - 1) (p_2^2 - 1) + \frac{1}{2} (p_1 - 1) (p_2 - 1)$, i.e., the rank of the asymptotic covariance matrix, leading to a second novel test of separability.

Covariance separability testing has been considered in the earlier literature by the following works: \cite{lu2005likelihood, mitchell2005testing} proposed using a likelihood ratio test under the assumption of normality, as did \cite{roy2005implementation, simpson2010adjusted, filipiak2016score} but under the further assumptions that the covariance components were either compound symmetric or AR(1)-type; \cite{guggenberger2023test} based their test on the concept of closest separable covariance matrix \citep{van1993approximations} under the assumption that the data are rank-1 matrices; \cite{park2019permutation} divide the null hypothesis into subhypotheses concerning the separability of submatrices of $\Sigma$ and use a distribution-free permutation procedure to obtain the null distribution; \cite{kim2023robust} assumed the data to vector-elliptical and used the likelihood ratio test of the angular Gaussian distribution, simulating the null distribution with a permutation test; \cite{sung2025testing} considered a high-dimensional case where $p_1, p_2$ grow proportionally to $\sqrt{n}$ and based their test on the so-called Kroncker-core decomposition. To obtain the null distribution, they used either Monte Carlo simulation or an asymptotic distribution under the assumption that the elements of $Z$ are i.i.d.

Hence, compared to these previous works, our approach has the following advantages: (i) We make no restrictive assumptions about the structures of the covariance components $\Sigma_1, \Sigma_2$.  (ii) Our tests are asymptotic, meaning that running them is extremely fast, compared to computationally intensive permutation procedures. (iii) Our model $P(M, \Sigma_1, \Sigma_2, F)$ includes a large class of distributions, going significantly beyond Gaussianity and being also larger than the vector-elliptical family assumed in \cite{kim2023robust}. A matrix $X$ is said to be vector-elliptical if it obeys a variant of the model \eqref{eq:main_model} where $Z$ satisfies $\mathrm{vec}(Z) \sim O \mathrm{vec}(Z)$ for all $p_1 p_2 \times p_1 p_2$ orthogonal matrices $O$. The vector-elliptical model (which is sometimes also called matrix elliptical) is a strict subclass of our matrix elliptical model \eqref{eq:main_model}. For example, the matrix $t$-distribution \citep{gupta2018matrix} is a member of the latter family but does not belong to the former. Finally, we note that the standard matrix normal distribution is the only distribution that is simultaneously matrix spherical and has i.i.d. elements. Hence, our work can be seen as a complementary approach to \cite{sung2025testing}, who extended separability testing from Gaussianity to matrices with i.i.d. elements. 

The matrix elliptical family \eqref{eq:main_model} has been studied comparatively little in the literature, apart from the special case of the matrix normal distribution. \cite{gupta2013elliptically} provide a general introduction to matrix elliptical distributions and discuss the maximum likelihood estimation of the model parameters when the sample size is $n = 1$. \cite{diaz2021matrix, diaz2023singular} use matrix elliptical distributions to derive Birnbaum–Saunders distributions for random matrices. \cite{caro2016matrix, shinozaki2018distribution} derive the distribution of the largest eigenvalue of $X^{\top}X$ when $X \sim P(0, I_{p_1}, \Sigma_2, F)$. \cite{diaz2012statistical} use matrix elliptical distributions in shape analysis. \cite{van2016estimation} estimate the model parameters using a Bayesian approach. \cite{arashi2023some} consider maximum likelihood estimation under the tensor-variate counterpart of~\eqref{eq:main_model}. As a secondary contribution of this work, we provide novel results about the matrix elliptical model \eqref{eq:main_model}, in particular, the formula for the fourth moment matrix $\mathrm{E} [ \{ \mathrm{vec}(Z) \otimes \mathrm{vec}(Z) \} \{ \mathrm{vec}(Z) \otimes \mathrm{vec}(Z) \}^{\top} ]$ in Theorem \ref{theo:fourth_moment}. This result is of independent interest and valuable whenever one is interested in the asymptotic behavior of second-order methods for matrix-valued data.

This work is organized as follows. In Section \ref{sec:estimation} we derive asymptotic linearizations of the separable and non-separable covariance estimators under the matrix elliptical model \eqref{eq:main_model}. In Section \ref{sec:fourth_moment_Z} we compute the fourth moment matrix of the matrix spherical distribution $Z$ and evaluate it under a specific singular value decomposition based characterization of $Z$. In Section \ref{sec:h0_test} the previous results are combined to prove our main result Theorem \ref{theo:main_limiting} and, in the same section, we also formulate the alternative, Wald-type version of the test statistic along with its null distribution. Simulation results are given in Section \ref{sec:simulations}. Throughout the paper, we rely heavily on the formula $\mathrm{vec}(AXB^{\top}) = (B \otimes A) \mathrm{vec}(X)$, valid for all matrices $A, X, B$ of compatible sizes. Similarly, a major role is played by the commutation matrices $K_{m,n}$, defined as the $mn \times mn$ permutation matrices that satisfy
\begin{align*}
    K_{m,n} (A \otimes B) K_{s, t} = B \otimes A,
\end{align*}
for all matrices $A \in \mathbb{R}^{n \times s}$ and $B \in \mathbb{R}^{m \times t}$.

\section{Estimation theory}\label{sec:estimation}

\subsection{Population level}

We begin by formally showing that both estimators detailed earlier are Fisher consistent for $\Theta = ( \Sigma_{2}/|\Sigma_{2}|^{1/p_2} ) \otimes ( \Sigma_{1}/|\Sigma_{1}|^{1/p_1} )$ under the model \eqref{eq:main_model}. The population counterpart of the unstructured estimator is simply $S := \mathrm{E} ( [ \{ \mathrm{vec}(X) - \mathrm{vec}\{ \mathrm{E}(X) \} ] [ \{ \mathrm{vec}(X) - \mathrm{vec}\{ \mathrm{E}(X) \} ]^{\top} )$ and for the structured estimator one searches for $B_1 \in \mathcal{S}_+^{p_1}, B_2 \in \mathcal{S}_+^{p_2}$ satisfying the pair of equations

\begin{equation}\label{eq:population_structured}
    \begin{cases}
      B_1 &= \frac{1}{p_2} \mathrm{E} [ \{ X - \mathrm{E}(X) \} B_2^{-1} \{ X - \mathrm{E}(X) \} ^{\top} ] \\
      B_2 &= \frac{1}{p_1} \mathrm{E} [ \{ X - \mathrm{E}(X) \}^{\top} B_1^{-1} \{ X - \mathrm{E}(X) \} ].
    \end{cases}       
\end{equation}

Through this section, we use the notation $\beta_F := \mathrm{E}(Z_{11}^2)$.

\begin{theorem}\label{theo:fisher_consistency_1}
    For $X \sim P(M, \Sigma_1, \Sigma_2, F)$, we have
    \begin{align*}
        S = \beta_F (\Sigma_2 \otimes \Sigma_1).
    \end{align*}
\end{theorem}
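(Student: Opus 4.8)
The plan is to compute $S = \mathrm{Cov}\{\mathrm{vec}(X)\}$ directly from the representation \eqref{eq:main_model}. Since $X = M + \Sigma_1^{1/2} Z \Sigma_2^{1/2}$, we have $\mathrm{vec}(X) - \mathrm{E}\{\mathrm{vec}(X)\} = (\Sigma_2^{1/2} \otimes \Sigma_1^{1/2}) \mathrm{vec}(Z)$ using the identity $\mathrm{vec}(AZB^{\top}) = (B \otimes A)\mathrm{vec}(Z)$ (noting $\Sigma_2^{1/2}$ is symmetric). Hence
\begin{align*}
    S = (\Sigma_2^{1/2} \otimes \Sigma_1^{1/2}) \, \mathrm{E}\{\mathrm{vec}(Z)\mathrm{vec}(Z)^{\top}\} \, (\Sigma_2^{1/2} \otimes \Sigma_1^{1/2}),
\end{align*}
so everything reduces to identifying the second moment matrix $\Lambda := \mathrm{E}\{\mathrm{vec}(Z)\mathrm{vec}(Z)^{\top}\}$ of the matrix spherical distribution $F$.

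The second step is to show $\Lambda = \beta_F I_{p_1 p_2}$. This follows from the matrix sphericity assumption $O_1 Z O_2^{\top} \sim Z$. First, taking $O_1 = I_{p_1}$ and $O_2$ a signed permutation, and similarly $O_2 = I_{p_2}$ with $O_1$ a signed permutation, shows that all entries $Z_{jk}$ have the same second moment $\beta_F$ and that $\mathrm{E}(Z_{jk} Z_{lm}) = 0$ whenever $(j,k) \neq (l,m)$ but they share a row or column index. For a pair of entries sharing neither index, one can rotate within, say, the two relevant columns by a $2\times 2$ rotation (embedded in an orthogonal $O_2$) to force the cross-moment to zero as well; alternatively, and more cleanly, one can invoke a single orthogonal transformation argument: for any fixed unit vectors, sphericity implies the distribution of $a^{\top} Z b$ depends only on $\|a\|,\|b\|$, and differentiating/polarizing the resulting quadratic form identities pins down $\Lambda$ as a scalar multiple of the identity. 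Either way, $\mathrm{E}(Z_{jk} Z_{lm}) = \beta_F \mathbf{1}\{(j,k)=(l,m)\}$, i.e.\ $\Lambda = \beta_F I_{p_1 p_2}$.

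Plugging this in gives $S = \beta_F (\Sigma_2^{1/2} \otimes \Sigma_1^{1/2})(\Sigma_2^{1/2} \otimes \Sigma_1^{1/2}) = \beta_F (\Sigma_2 \otimes \Sigma_1)$, using $(A\otimes B)(C\otimes D) = AC \otimes BD$ and $\Sigma_i^{1/2}\Sigma_i^{1/2} = \Sigma_i$. This also immediately confirms the claim made earlier in the text that $H_0$ holds under the model, since $\beta_F \Sigma_2 \otimes \Sigma_1 = \Sigma_2 \otimes (\beta_F \Sigma_1)$ is separable.

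The only mildly delicate point is the second step — verifying that sphericity forces the full second-moment matrix to be scalar, in particular killing the cross-moments between entries that share no row or column index; signed permutations alone handle the diagonal and the "shared-index" off-diagonal terms, but one genuinely needs a continuous rotation (or a Householder reflection swapping two coordinate directions) to handle the remaining off-diagonal terms. This is standard for elliptical families, so I would state it briefly, perhaps noting that it is the matrix analogue of the classical fact that a spherical random vector has covariance proportional to the identity, applied separately to the row and column spaces.
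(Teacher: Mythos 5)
Your proof is correct and follows essentially the same route as the paper: reduce to the second-moment matrix $\mathrm{E}\{\mathrm{vec}(Z)\mathrm{vec}(Z)^{\top}\}$ and use the two-sided orthogonal invariance to show it equals $\beta_F I_{p_1 p_2}$. One small correction to your closing remark: no continuous rotation or reflection is needed for the entries sharing neither a row nor a column index, since negating a single row (take $O_1$ diagonal with $-1$ in position $j$ and $O_2 = I_{p_2}$) already flips the sign of $Z_{jk}Z_{lm}$ whenever $j \neq l$ and hence kills that cross-moment; diagonal $\pm 1$ matrices together with permutations are exactly what the paper's proof uses.
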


\begin{theorem}\label{theo:fisher_consistency_2}
    For $X \sim P(M, \Sigma_1, \Sigma_2, F)$, the following hold.
    \begin{itemize}
        \item[(i)] $(B_1, B_2) = ( \beta_F^{1/2} \Sigma_1, \beta_F^{1/2} \Sigma_2)$ is a solution to \eqref{eq:population_structured}.
        \item[(ii)] If $(B_1, B_2)$ is a solution to \eqref{eq:population_structured}, then $B_1 = b_1 \Sigma_1$ and $B_2 = b_2 \Sigma_2$ for some $b_1, b_2 > 0$.
    \end{itemize}    
\end{theorem}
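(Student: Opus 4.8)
The plan is to exploit the matrix spherical structure of $Z$ to compute the relevant conditional expectations, then use a uniqueness argument for the fixed-point system \eqref{eq:population_structured}. Write $Z \sim F$ with $X = M + \Sigma_1^{1/2} Z \Sigma_2^{1/2}$, so that $X - \mathrm{E}(X) = \Sigma_1^{1/2} Z \Sigma_2^{1/2}$ (the centering is legitimate since sphericity of $Z$ forces $\mathrm{E}(Z) = 0$; indeed $-Z \sim Z$). The key computational fact I would establish first is that for any fixed symmetric positive definite $p_2 \times p_2$ matrix $C$,
\begin{align*}
    \mathrm{E}( Z C Z^{\top} ) = \frac{\mathrm{tr}(C)}{p_2}\, \mathrm{E}(Z_{11}^2)\, I_{p_1} = \frac{\mathrm{tr}(C)}{p_2}\, \beta_F\, I_{p_1},
\end{align*}
and symmetrically $\mathrm{E}(Z^{\top} D Z) = (\mathrm{tr}(D)/p_1)\,\beta_F\, I_{p_2}$ for symmetric positive definite $D \in \mathbb{R}^{p_1 \times p_1}$. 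This follows from the invariance $O_1 Z O_2^{\top} \sim Z$: the $(j,k)$ entry of $\mathrm{E}(ZCZ^\top)$ is $\sum_{a,b} C_{ab}\,\mathrm{E}(Z_{ja}Z_{kb})$, and sphericity forces $\mathrm{E}(Z_{ja}Z_{kb}) = \delta_{jk}\delta_{ab}\beta_F$ (permuting rows/columns and using sign flips kills all off-diagonal second moments and equalizes the diagonal ones). Diagonalizing $C$ via an orthogonal matrix then gives the trace formula.

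For part (i), I substitute $(B_1,B_2) = (\beta_F^{1/2}\Sigma_1, \beta_F^{1/2}\Sigma_2)$ into the right-hand side of the first equation of \eqref{eq:population_structured}:
\begin{align*}
    \frac{1}{p_2}\mathrm{E}\big[ \Sigma_1^{1/2} Z \Sigma_2^{1/2} (\beta_F^{1/2}\Sigma_2)^{-1} \Sigma_2^{1/2} Z^{\top} \Sigma_1^{1/2} \big] = \frac{\beta_F^{-1/2}}{p_2} \Sigma_1^{1/2}\, \mathrm{E}(ZZ^{\top})\, \Sigma_1^{1/2} = \frac{\beta_F^{-1/2}}{p_2}\,\Sigma_1^{1/2}(p_2\beta_F I_{p_1})\Sigma_1^{1/2},
\end{align*}
which equals $\beta_F^{1/2}\Sigma_1 = B_1$, using the special case $C = I_{p_2}$, $\mathrm{tr}(I_{p_2}) = p_2$. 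The second equation is verified identically, so $(\beta_F^{1/2}\Sigma_1,\beta_F^{1/2}\Sigma_2)$ is a genuine solution.

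For part (ii), suppose $(B_1, B_2)$ is any solution. The same computation as above, now with $C = \Sigma_2^{1/2} B_2^{-1} \Sigma_2^{1/2}$ (symmetric positive definite), gives from the first equation
\begin{align*}
    B_1 = \frac{1}{p_2}\Sigma_1^{1/2}\,\mathrm{E}(ZCZ^\top)\,\Sigma_1^{1/2} = \frac{\mathrm{tr}(C)\,\beta_F}{p_2^2}\,\Sigma_1,
\end{align*}
so $B_1 = b_1 \Sigma_1$ with $b_1 = \mathrm{tr}(\Sigma_2^{1/2} B_2^{-1}\Sigma_2^{1/2})\beta_F/p_2^2 > 0$; symmetrically the second equation yields $B_2 = b_2\Sigma_2$ for some $b_2 > 0$. (One could alternatively pin down $b_1 b_2 = \beta_F$ from the confounding relation, but this is not needed for the stated claim.) The only real obstacle is the careful justification of the second-moment identity for $Z$ from sphericity — making sure the row/column permutation and sign-flip arguments are spelled out correctly — and ensuring that interchanging expectation and the (deterministic, bounded-on-the-support-after-multiplication) linear maps is valid, which follows from the assumed finite fourth moment $\mathrm{E}\|Z\|_F^4 < \infty$ (finite second moment suffices here). $\qed$
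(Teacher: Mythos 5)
Your approach is essentially the same as the paper's: establish that right-and-left orthogonal invariance forces $\mathrm{E}(ZCZ^{\top})$ to be a multiple of the identity proportional to $\mathrm{tr}(C)$, then substitute into the fixed-point equations. One slip to fix: your displayed key identity carries a spurious factor $1/p_2$ — from your own entrywise computation $\mathrm{E}(Z_{ja}Z_{kb}) = \delta_{jk}\delta_{ab}\beta_F$ one gets $\mathrm{E}(ZCZ^{\top}) = \beta_F\,\mathrm{tr}(C)\, I_{p_1}$, not $\beta_F\,\mathrm{tr}(C)\, I_{p_1}/p_2$ (check $C = I_{p_2}$, which gives $p_2\beta_F I_{p_1}$, the value you correctly use in part (i) but which contradicts the stated formula). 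The inconsistency propagates into part (ii), where the constant should be $b_1 = \beta_F\,\mathrm{tr}(\Sigma_2^{1/2}B_2^{-1}\Sigma_2^{1/2})/p_2$ rather than with $p_2^2$ in the denominator; this does not affect either conclusion, since part (i) uses the correct value and part (ii) only needs $b_1 > 0$, but it should be made consistent.
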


Theorems \ref{theo:fisher_consistency_1} and \ref{theo:fisher_consistency_2} imply that, after the scaling via determinants, Fisher consistent estimates of $\Theta$ are indeed obtained for both approaches. It can be shown that, in part (ii), the constants $b_1, b_2$ for any solution satisfy $b_1 b_2 = \beta_F$. We also note that, while the solution $(B_1, B_2)$ of \eqref{eq:population_structured} can thus be made unique by suitably constraining the scales of $B_1, B_2$, how this is done is not relevant due to our determinant scaling. For the sake of simplicity, we thus assume that $|B_1| = 1$ in \eqref{eq:population_structured} and that $|S_{1n}| = 1$ in the sample counterpart \eqref{eq:sample_structured}, again noting that this particular choice has no effect on our subsequent conclusions.

\subsection{Sample level}

Let next $X_1, \ldots, X_n$ be a random sample from the model $P(M, \Sigma_1, \Sigma_2, F)$. We begin by showing that both the unstructured and structured sample estimators converge in the almost sure sense. For the former, the claim follows directly from the classical strong law of large numbers and continuous mapping theorem, and we omit its proof. We use the notation $\Sigma := \Sigma_2 \otimes \Sigma_1$, implying that
\begin{align*}
    \frac{\Sigma}{| \Sigma |^{1/(p_1 p_2)}} = \frac{\Sigma_2}{| \Sigma_2 |^{1/p_2}} \otimes \frac{\Sigma_1}{| \Sigma_1 |^{1/p_1}}.
\end{align*}

\begin{theorem}
    For a random sample $X_1, \ldots, X_n$ from $P(M, \Sigma_1, \Sigma_2, F)$, we have
    \begin{align*}
        \frac{S_{n}}{| S_{n} |^{1/(p_1 p_2)}} \rightarrow_{a.s.} \frac{\Sigma}{| \Sigma |^{1/(p_1 p_2)}},
    \end{align*}
    as $n \rightarrow \infty$.
\end{theorem}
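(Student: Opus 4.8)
The plan is to reduce the claim to an application of the strong law of large numbers together with the continuous mapping theorem, exploiting the fact that the unknown nuisance scalar $\beta_F$ cancels under the determinant normalization. Since $F$ has a finite fourth moment and $X = M + \Sigma_1^{1/2} Z \Sigma_2^{1/2}$, the vector $\mathrm{vec}(X)$ has finite second moments, so by the strong law of large numbers both $\bar{X} \rightarrow_{a.s.} M$ and $(1/n) \sum_{i=1}^n \mathrm{vec}(X_i) \mathrm{vec}(X_i)^\top \rightarrow_{a.s.} \mathrm{E}\{ \mathrm{vec}(X) \mathrm{vec}(X)^\top \}$. Expanding $S_n = (1/n)\sum_{i=1}^n \mathrm{vec}(X_i)\mathrm{vec}(X_i)^\top - \mathrm{vec}(\bar X)\mathrm{vec}(\bar X)^\top$ and combining these two limits via the continuous mapping theorem gives $S_n \rightarrow_{a.s.} S$, where $S = \mathrm{Cov}\{\mathrm{vec}(X)\}$.

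By Theorem \ref{theo:fisher_consistency_1}, $S = \beta_F \Sigma$ with $\Sigma = \Sigma_2 \otimes \Sigma_1$ positive definite and $\beta_F > 0$. Hence, applying the continuous mapping theorem to the determinant, $|S_n| \rightarrow_{a.s.} |S| = \beta_F^{p_1 p_2} |\Sigma| > 0$. In particular $|S_n|$ is strictly positive on an event of probability one for all $n$ large enough, so the normalization $S_n/|S_n|^{1/(p_1 p_2)}$ is eventually well defined, and a final application of the continuous mapping theorem yields
\begin{align*}
    \frac{S_n}{|S_n|^{1/(p_1 p_2)}} \rightarrow_{a.s.} \frac{S}{|S|^{1/(p_1 p_2)}} = \frac{\beta_F \Sigma}{(\beta_F^{p_1 p_2} |\Sigma|)^{1/(p_1 p_2)}} = \frac{\Sigma}{|\Sigma|^{1/(p_1 p_2)}}.
\end{align*}
The stated form of the limit then follows from the Kronecker-product identity for $\Sigma/|\Sigma|^{1/(p_1 p_2)}$ recorded just above the theorem.

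There is essentially no serious obstacle here; the proof is a routine stringing-together of standard limit theorems. The only points warranting a word of care are (a) verifying that the finite-fourth-moment assumption on $F$ furnishes the finite second moments of $\mathrm{vec}(X)$ needed for the strong law, which is immediate from the representation \eqref{eq:main_model} and linearity of the expectation, and (b) observing that the scale confounding of the model, reflected in the factor $\beta_F$, is precisely annihilated by dividing by $|S_n|^{1/(p_1 p_2)}$, which is the very reason the determinant standardization was introduced.
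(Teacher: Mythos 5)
Your argument is correct and is exactly the route the paper takes: the authors explicitly omit this proof, stating that it ``follows directly from the classical strong law of large numbers and continuous mapping theorem,'' and your write-up simply fills in those steps, including the key observation that $S = \beta_F \Sigma$ (from the Fisher consistency result) and that the factor $\beta_F$ cancels under the determinant normalization. Nothing further is needed.
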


\begin{theorem}\label{theo:consistency_separable}
    For a random sample $X_1, \ldots, X_n$ from $P(M, \Sigma_1, \Sigma_2, F)$, we have
    \begin{align*}
        \frac{S_{1n}}{| S_{1n} |^{1/p_1}} \rightarrow_{a.s.} \frac{\Sigma_1}{| \Sigma_1 |^{1/p_1}} \quad \mbox{and} \quad \frac{S_{2n}}{| S_{2n} |^{1/p_2}} \rightarrow_{a.s.} \frac{\Sigma_2}{| \Sigma_2 |^{1/p_2}},
    \end{align*}
    as $n \rightarrow \infty$.
\end{theorem}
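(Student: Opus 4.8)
The plan is to observe that the flip-flop equations \eqref{eq:sample_structured} determine $(S_{1n},S_{2n})$ as a function of the unstructured sample covariance $S_n$ alone, and then to exploit the almost sure convergence $S_n \rightarrow_{a.s.} S = \beta_F \Sigma$ (strong law of large numbers together with Theorem \ref{theo:fisher_consistency_1}). Concretely, for any conformable matrix $A$ the maps $A \mapsto \frac{1}{n}\sum_i (X_i-\bar X)A(X_i-\bar X)^\top$ and $A \mapsto \frac{1}{n}\sum_i(X_i-\bar X)^\top A(X_i-\bar X)$ are linear in $A$ with coefficients that are entries of $S_n$; hence \eqref{eq:sample_structured} may be read as a fixed-point system depending on the data only through $S_n$, with \eqref{eq:population_structured} being exactly the same system with $S_n$ replaced by $S = \beta_F\Sigma$. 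Writing $\mathcal{M}$ for the induced solution map --- well defined, with the scale pinned down by $|S_{1n}|=1$, on the set of positive definite inputs for which the matrix-normal MLE exists and is unique up to scale, which, since $F$ is continuous, happens almost surely for all large $n$; see \cite{drton2021existence} --- Theorem \ref{theo:fisher_consistency_2} (together with the relation $b_1 b_2 = \beta_F$ noted thereafter) identifies $\mathcal{M}(\beta_F\Sigma) = (\Sigma_1/|\Sigma_1|^{1/p_1},\ \beta_F|\Sigma_1|^{1/p_1}\Sigma_2)$. If $\mathcal{M}$ is continuous at $\beta_F\Sigma$, then $(S_{1n},S_{2n}) \rightarrow_{a.s.} \mathcal{M}(\beta_F\Sigma)$, and one final application of the continuous mapping theorem to the determinant standardizations --- under which the nuisance scalar $\beta_F|\Sigma_1|^{1/p_1}$ cancels --- gives the statement.

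Thus the real work is in showing that $\mathcal{M}$ is continuous at $\beta_F\Sigma$. I would do this by a Wald-type subsequence argument on the standardized estimators $T_{1n} := S_{1n}/|S_{1n}|^{1/p_1}$ and $T_{2n} := S_{2n}/|S_{2n}|^{1/p_2}$, both of unit determinant: it suffices to show that along any subsequence there is a further subsequence on which $(T_{1n},T_{2n})$ converges, and that every subsequential limit $(T_1^\ast,T_2^\ast)$ --- necessarily positive definite of unit determinant --- solves \eqref{eq:population_structured} up to scale. Theorem \ref{theo:fisher_consistency_2}(ii) then forces $(T_1^\ast,T_2^\ast) = (\Sigma_1/|\Sigma_1|^{1/p_1}, \Sigma_2/|\Sigma_2|^{1/p_2})$, so the full sequence converges to this common limit. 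Passing to the limit inside \eqref{eq:sample_structured} is legitimate because, on any fixed compact subset of the positive definite cone, the right-hand sides of \eqref{eq:sample_structured} converge almost surely and uniformly to their population counterparts: this is immediate from the linear-in-$S_n$ representation above together with $S_n \rightarrow_{a.s.} \beta_F\Sigma$ and the boundedness and uniform continuity of $A \mapsto A^{-1}$ on compacta.

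The step I expect to be the main obstacle is the tightness input: showing that, almost surely, the condition numbers of $T_{1n}$ and $T_{2n}$ stay bounded for all large $n$ (equivalently, since $|T_{1n}|=|T_{2n}|=1$, that their largest eigenvalues are eventually bounded), so that Bolzano--Weierstrass supplies the convergent subsequences and their limits are nondegenerate. For this I would combine: (i) the crude two-sided bounds obtained by inserting $\lambda_{\min}(S_{jn})I \preceq S_{jn}^{-1} \preceq \lambda_{\max}(S_{jn})I$ into \eqref{eq:sample_structured}, which tie the extreme eigenvalues of $S_{1n},S_{2n}$ to those of the almost surely convergent Gram matrices $\frac{1}{n}\sum_i(X_i-\bar X)(X_i-\bar X)^\top$ and $\frac{1}{n}\sum_i(X_i-\bar X)^\top(X_i-\bar X)$; (ii) the trace identity $\frac{1}{n}\sum_i\|S_{1n}^{-1/2}(X_i-\bar X)S_{2n}^{-1/2}\|_F^2 = \mathrm{tr}\{(S_{2n}^{-1}\otimes S_{1n}^{-1})S_n\} = p_1p_2$, a direct consequence of \eqref{eq:sample_structured}, which with $S_n$ eventually positive definite yields a bound of the form $\lambda_{\min}(S_{1n})\lambda_{\min}(S_{2n}) \geq c > 0$; and (iii) the determinant normalization, which couples $\lambda_{\min}$ and $\lambda_{\max}$ of each estimator and so prevents the two scales from escaping in opposite directions. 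Forcing these estimates to close the loop is the delicate part of the proof; a cleaner shortcut is to invoke directly stability or sample-complexity bounds for the matrix-normal MLE (e.g., \cite{franks2021near}), which control the normalized estimator's condition number for all large $n$, almost surely, via Borel--Cantelli. Once tightness is secured, the subsequence argument closes and the theorem follows.
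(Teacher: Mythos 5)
Your proposal is correct, but it takes a genuinely different route from the paper. The paper views $(S_{1n},S_{2n})$ as the minimizer of the matrix-normal negative log-likelihood over the product manifold $\mathcal{M}^{p_1}\times\mathcal{S}_+^{p_2}$, invokes joint geodesic convexity of that objective \citep{wiesel2012geodesic}, $g$-convexity of the constraint set \citep{duembgen2016geodesic}, almost sure uniqueness of the sample minimizer \citep{soloveychik2016gaussian}, and then applies a general argmin-consistency result for $g$-convex functions on complete Riemannian manifolds (Theorem \ref{theo:manifold_minimization}, a variant of \cite{brunel2023geodesically}). The decisive advantage of that route is that it never needs tightness of the estimator sequence: convexity confines the minimizer to a geodesic ball as soon as the sample objective on the bounding sphere exceeds its value at the center, so the whole eigenvalue bookkeeping you anticipate as ``the delicate part'' simply disappears. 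Your route instead treats \eqref{eq:sample_structured} as a fixed-point system in $S_n$, proves tightness by hand, and closes with a Wald-type subsequence argument plus the uniqueness statement of Theorem \ref{theo:fisher_consistency_2}(ii); this is more elementary and self-contained (no geodesic-convexity machinery), at the price of the explicit spectral estimates.

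On the step you flagged as uncertain: your three ingredients do close the loop, so there is no gap, only an unexecuted computation. Writing $a_n=\lambda_{\min}(S_{1n})$, $A_n=\lambda_{\max}(S_{1n})$, $b_n=\lambda_{\min}(S_{2n})$, $B_n=\lambda_{\max}(S_{2n})$, your trace identity gives $p_1p_2=\mathrm{tr}\{(S_{2n}^{-1}\otimes S_{1n}^{-1})S_n\}\geq\lambda_{\min}(S_n)/(a_nb_n)$, hence $a_nb_n\geq c>0$ eventually a.s.; the normalization $|S_{1n}|=1$ forces $a_n\leq 1$, whence $b_n\geq c$; the crude bound $S_{1n}\preceq (p_2 b_n)^{-1}\frac{1}{n}\sum_i(X_i-\bar X)(X_i-\bar X)^{\top}$ then caps $A_n$; the determinant constraint $1\leq A_n^{p_1-1}a_n$ floors $a_n$; and the remaining crude bounds cap $B_n$ and floor $b_n$. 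With all four spectral quantities eventually trapped in a compact subset of $(0,\infty)$, Bolzano--Weierstrass applies, every subsequential limit of the unit-determinant pair solves \eqref{eq:population_structured} up to the scalar $\lim(|S_{1n}|^{1/p_1}|S_{2n}|^{1/p_2})$, and Theorem \ref{theo:fisher_consistency_2}(ii) pins it down. The only housekeeping items to state explicitly are the a.s. existence of a solution to \eqref{eq:sample_structured} for large $n$ (which you cover via \cite{drton2021existence}; the paper uses \cite{soloveychik2016gaussian} for the same purpose) and the fact that the argument applies to any measurable selection of solutions, so uniqueness of the sample solution is not actually needed for your subsequence argument.
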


Having shown consistency, our next objective is to derive asymptotic linearizations for the sample counterparts of the estimators. But, before that, we present a general lemma for the first-order expansion of determinant-scaled estimators. Let $P_p := (1/p) \mathrm{vec}(I_p) \mathrm{vec}(I_p)^{\top}$ and $Q_p := I_{p^2} - P_p$. Namely, $Q_p$ denotes the projection matrix onto the orthogonal complement of the span of $\mathrm{vec}(I_p)$.

\begin{lemma}\label{lem:det_expansion}
    Let $A_n \in \mathcal{S}_+^p$ be a sequence of random matrices satisfying $\sqrt{n} (A_n - \gamma A) = \mathcal{O}_p(1)$ for some $\gamma > 0$ and $A \in \mathcal{S}_+^p$. Then,
    \begin{align*}
        \sqrt{n} \mathrm{vec} \left( \frac{A_n}{|A_n|^{1/p}} - \frac{A}{|A|^{1/p}} \right) = \frac{1}{\gamma |A|^{1/p}} (A^{1/2} \otimes A^{1/2}) Q_p (A^{-1/2} \otimes A^{-1/2}) \sqrt{n} \mathrm{vec} (A_n - \gamma A) + o_p(1). 
    \end{align*}
\end{lemma}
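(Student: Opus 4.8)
The plan is to treat the map $B \mapsto B/|B|^{1/p}$ via the delta method, expanding the scalar factor $|A_n|^{-1/p}$ around $|\gamma A|^{-1/p}$. First I would write $A_n = \gamma A + n^{-1/2} W_n$ with $W_n := \sqrt{n}(A_n - \gamma A) = \mathcal{O}_p(1)$, so that by the standard first-order expansion of the determinant, $|A_n| = |\gamma A|\, (1 + n^{-1/2}\,\mathrm{tr}\{(\gamma A)^{-1} W_n\} + o_p(n^{-1/2}))$, and hence
\begin{align*}
    |A_n|^{-1/p} = |\gamma A|^{-1/p}\Bigl( 1 - \tfrac{1}{p}\,n^{-1/2}\,\mathrm{tr}\{(\gamma A)^{-1} W_n\} + o_p(n^{-1/2}) \Bigr).
\end{align*}
Multiplying this by $A_n = \gamma A + n^{-1/2} W_n$ and keeping only the $\mathcal{O}(n^{-1/2})$ terms, the $\gamma A$ leading term reproduces $\gamma A/|\gamma A|^{1/p} = A/|A|^{1/p}$ (using the homogeneity $|\gamma A|^{1/p} = \gamma |A|^{1/p}$), and the $n^{-1/2}$-order term is
\begin{align*}
    \frac{1}{\gamma |A|^{1/p}}\Bigl( W_n - \tfrac{1}{p}\,\mathrm{tr}\{A^{-1} W_n\}\, A \Bigr) + o_p(n^{-1/2}),
\end{align*}
where I have also used $\mathrm{tr}\{(\gamma A)^{-1} W_n\} = \gamma^{-1}\mathrm{tr}\{A^{-1} W_n\}$ and collected the constants. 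Vectorizing and multiplying by $\sqrt{n}$ then gives $\sqrt{n}\,\mathrm{vec}(A_n/|A_n|^{1/p} - A/|A|^{1/p})$ equal to $(\gamma |A|^{1/p})^{-1}$ times $\mathrm{vec}(W_n) - p^{-1}\mathrm{vec}(I_p)\mathrm{vec}(I_p)^\top$ wait — more precisely $\mathrm{vec}(W_n) - p^{-1}\,\mathrm{tr}(A^{-1}W_n)\,\mathrm{vec}(A)$, plus $o_p(1)$.

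The remaining task is purely algebraic: to show that the linear operator $W \mapsto \mathrm{vec}(W) - p^{-1}\,\mathrm{tr}(A^{-1}W)\,\mathrm{vec}(A)$ coincides with $(A^{1/2}\otimes A^{1/2})\,Q_p\,(A^{-1/2}\otimes A^{-1/2})$ applied to $\mathrm{vec}(W)$. Using $\mathrm{vec}(AXB^\top) = (B\otimes A)\mathrm{vec}(X)$, the operator $A^{-1/2}\otimes A^{-1/2}$ sends $\mathrm{vec}(W)$ to $\mathrm{vec}(A^{-1/2} W A^{-1/2}) =: \mathrm{vec}(\tilde W)$; then $Q_p = I_{p^2} - p^{-1}\mathrm{vec}(I_p)\mathrm{vec}(I_p)^\top$ subtracts off $p^{-1}\mathrm{vec}(I_p)^\top\mathrm{vec}(\tilde W)\,\mathrm{vec}(I_p) = p^{-1}\mathrm{tr}(\tilde W)\,\mathrm{vec}(I_p)$; and finally $A^{1/2}\otimes A^{1/2}$ sends $\mathrm{vec}(I_p)$ to $\mathrm{vec}(A)$ and $\mathrm{vec}(\tilde W)$ back to $\mathrm{vec}(W)$. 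Since $\mathrm{tr}(\tilde W) = \mathrm{tr}(A^{-1/2}WA^{-1/2}) = \mathrm{tr}(A^{-1}W)$, this yields exactly $\mathrm{vec}(W) - p^{-1}\mathrm{tr}(A^{-1}W)\mathrm{vec}(A)$, matching the expression above and completing the identification.

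I do not anticipate a serious obstacle here; the only points requiring a little care are the rigorous justification of the $o_p$ bookkeeping — i.e. that the $o_p(n^{-1/2})$ remainder in the determinant expansion, once multiplied by $\sqrt{n}$ and by the bounded matrix factors, is genuinely $o_p(1)$, which follows because $A_n \to_{a.s.}$ or at least $\to_p \gamma A$ by the assumed $\sqrt{n}$-rate together with continuity of the determinant and matrix inverse on a neighbourhood of $\gamma A$ in $\mathcal{S}_+^p$ — and the homogeneity identity $|\gamma A|^{1/p} = \gamma|A|^{1/p}$ that makes the leading terms cancel cleanly. Everything else is the delta method plus the two $\mathrm{vec}$/Kronecker identities already highlighted in the introduction.
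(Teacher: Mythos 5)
Your proof is correct and follows essentially the same route as the paper's: a first-order Taylor expansion of the determinant (equivalently, $|I_p+\varepsilon G|^{1/p}=1+(1/p)\mathrm{tr}(G)\varepsilon+o(\varepsilon)$), cancellation of the leading term via the homogeneity $|\gamma A|^{1/p}=\gamma|A|^{1/p}$, and the identification of the linear map $W\mapsto \mathrm{vec}(W)-p^{-1}\mathrm{tr}(A^{-1}W)\,\mathrm{vec}(A)$ with $(A^{1/2}\otimes A^{1/2})Q_p(A^{-1/2}\otimes A^{-1/2})$ through $\mathrm{vec}(A)=(A^{1/2}\otimes A^{1/2})\mathrm{vec}(I_p)$ and $\mathrm{vec}(A^{-1})=(A^{-1/2}\otimes A^{-1/2})\mathrm{vec}(I_p)$. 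The only difference is organizational (you expand $|A_n|^{-1/p}$ and multiply by $A_n$ directly, whereas the paper first splits the difference into two Slutsky-handled terms), which is immaterial.
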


The next two theorems and corollary now give the asymptotic expansions of the determinant-scaled estimators in terms of the second moment vector of the data,
\begin{align*}
    f_n := \sqrt{n} \mathrm{vec} \left( \frac{1}{n} \sum_{i=1}^n \mathrm{vec}(Z_i) \mathrm{vec}(Z_i)^{\top} - \beta_F I_{p_1 p_2} \right) =\sqrt{n} \left[ \frac{1}{n} \sum_{i = 1}^n  \{ \mathrm{vec}(Z_i) \otimes \mathrm{vec}(Z_i) \} - \beta_F \mathrm{vec}(I_{p_1 p_2}) \right].
\end{align*}

\begin{theorem}\label{theo:bahadur_1}
    For a random sample $X_1, \ldots, X_n$ from $P(M, \Sigma_1, \Sigma_2, F)$, we have
    \begin{align*}
        \sqrt{n} \mathrm{vec} \left( \frac{S_n}{|S_n|^{1/(p_1 p_2)}} - \frac{\Sigma}{|\Sigma|^{1/(p_1 p_2)}} \right) = \frac{(\Sigma^{1/2} \otimes \Sigma^{1/2})}{\beta_F |\Sigma|^{1/(p_1 p_2)}} Q_{p_1 p_2} f_n + o_p(1).
    \end{align*}
\end{theorem}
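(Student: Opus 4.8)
The plan is to apply Lemma \ref{lem:det_expansion} with $A_n = S_n$, $A = \Sigma$, and $\gamma = \beta_F$, after which only two things remain: verifying the $\sqrt{n}$-boundedness hypothesis of the lemma, and rewriting $\sqrt{n}\,\mathrm{vec}(S_n - \beta_F \Sigma)$ in terms of the vector $f_n$. Starting from the model \eqref{eq:main_model}, write $X_i - \bar X = \Sigma_1^{1/2}(Z_i - \bar Z)\Sigma_2^{1/2}$, so that $\mathrm{vec}(X_i - \bar X) = (\Sigma_2^{1/2}\otimes\Sigma_1^{1/2})\,\mathrm{vec}(Z_i - \bar Z)$. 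Hence
\begin{align*}
S_n = (\Sigma_2^{1/2}\otimes\Sigma_1^{1/2}) \left( \frac{1}{n}\sum_{i=1}^n \mathrm{vec}(Z_i - \bar Z)\mathrm{vec}(Z_i - \bar Z)^\top \right) (\Sigma_2^{1/2}\otimes\Sigma_1^{1/2}),
\end{align*}
and since $\Sigma^{1/2} = \Sigma_2^{1/2}\otimes\Sigma_1^{1/2}$ (up to the usual commutation-matrix bookkeeping, which I would state carefully), $\sqrt{n}\,\mathrm{vec}(S_n - \beta_F\Sigma) = (\Sigma^{1/2}\otimes\Sigma^{1/2})\,\sqrt{n}\,\mathrm{vec}(\tilde S_n - \beta_F I_{p_1p_2})$, where $\tilde S_n$ is the inner sample second-moment matrix of the $Z_i - \bar Z$.

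The next step is to replace $Z_i - \bar Z$ by $Z_i$ inside $\tilde S_n$ at the cost of an $o_p(1)$ term. Expanding, $\tilde S_n = n^{-1}\sum_i \mathrm{vec}(Z_i)\mathrm{vec}(Z_i)^\top - \mathrm{vec}(\bar Z)\mathrm{vec}(\bar Z)^\top$; since $\sqrt{n}\,\mathrm{vec}(\bar Z) = \mathcal{O}_p(1)$ (finite second moments, $\mathrm{E}(Z)=0$ by sphericity) the correction is $\mathcal{O}_p(1/\sqrt n)\cdot\mathcal{O}_p(1/\sqrt n)$ after multiplication by $\sqrt n$, hence $o_p(1)$. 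Therefore $\sqrt{n}\,\mathrm{vec}(\tilde S_n - \beta_F I_{p_1p_2}) = f_n + o_p(1)$ by the very definition of $f_n$, and the central limit theorem (applicable because $\mathrm{E}\|Z\|_F^4 < \infty$) gives $f_n = \mathcal{O}_p(1)$. Combined with the boundedness of $(\Sigma^{1/2}\otimes\Sigma^{1/2})$, this verifies the hypothesis $\sqrt{n}(S_n - \beta_F\Sigma) = \mathcal{O}_p(1)$ of Lemma \ref{lem:det_expansion}.

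Finally, plug into Lemma \ref{lem:det_expansion}: with $\gamma = \beta_F$ and $A = \Sigma$,
\begin{align*}
\sqrt{n}\,\mathrm{vec}\!\left( \frac{S_n}{|S_n|^{1/(p_1p_2)}} - \frac{\Sigma}{|\Sigma|^{1/(p_1p_2)}} \right)
= \frac{1}{\beta_F|\Sigma|^{1/(p_1p_2)}}(\Sigma^{1/2}\otimes\Sigma^{1/2})\,Q_{p_1p_2}\,(\Sigma^{-1/2}\otimes\Sigma^{-1/2})\,(\Sigma^{1/2}\otimes\Sigma^{1/2})\,f_n + o_p(1),
\end{align*}
and the two middle Kronecker factors cancel, $(\Sigma^{-1/2}\otimes\Sigma^{-1/2})(\Sigma^{1/2}\otimes\Sigma^{1/2}) = I_{(p_1p_2)^2}$, leaving exactly the claimed expression. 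The only genuinely delicate point is the Kronecker/commutation bookkeeping: one must be careful that $(\Sigma_2^{1/2}\otimes\Sigma_1^{1/2})$ is literally $\Sigma^{1/2}$ and that $\mathrm{vec}(A B A^\top)$-type identities line up the factors correctly; everything else is routine CLT plus an $o_p$ estimate for the mean-centering correction. I would also note in passing that $(\Sigma_2^{1/2}\otimes\Sigma_1^{1/2})^2 = \Sigma_2\otimes\Sigma_1 = \Sigma$, which is what makes this the principal square root and justifies writing $\Sigma^{1/2}\otimes\Sigma^{1/2}$ — a slight abuse of notation for $\Sigma_2^{1/2}\otimes\Sigma_1^{1/2}$ that the paper should flag.
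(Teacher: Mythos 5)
Your proposal is correct and follows essentially the same route as the paper: verify $\sqrt{n}(S_n-\beta_F\Sigma)=\mathcal{O}_p(1)$ via the CLT, factor $S_n$ through $\Sigma^{1/2}=\Sigma_2^{1/2}\otimes\Sigma_1^{1/2}$ (which is indeed exactly, not approximately, the symmetric square root of $\Sigma$), and apply Lemma \ref{lem:det_expansion} with $\gamma=\beta_F$, the Kronecker factors cancelling as you describe. If anything you are slightly more careful than the paper, which silently drops the $\bar{Z}$-centering term that you correctly dispose of as $o_p(1)$.
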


\begin{theorem}\label{theo:bahadur_2}
    For a random sample $X_1, \ldots, X_n$ from $P(M, \Sigma_1, \Sigma_2, F)$, let $(S_{1n}, S_{2n})$ be a solution to \eqref{eq:sample_structured} under the constraint that $| S_{1n} | = 1$. Then, it holds that
    \begin{align*}
        \sqrt{n} \mathrm{vec} \left( \frac{S_{1n}}{|S_{1n}|^{1/p_1}} - \frac{\Sigma_1}{|\Sigma_1|^{1/p_1}} \right) = \frac{(\Sigma_1^{1/2} \otimes \Sigma_1^{1/2})}{\beta_F p_2 |\Sigma_1|^{1/p_1}}  Q_{p_1} \{ \mathrm{vec}(I_{p_2})^{\top} \otimes I_{p_1^2} \} (I_{p_2} \otimes K_{p_2, p_1} \otimes I_{p_1}) f_n + o_p(1),
    \end{align*}
    and
    \begin{align*}
        \sqrt{n} \mathrm{vec} \left( \frac{S_{2n}}{|S_{2n}|^{1/p_2}} - \frac{\Sigma_2}{|\Sigma_2|^{1/p_2}} \right)
        = \frac{(\Sigma_2^{1/2} \otimes \Sigma_2^{1/2})}{\beta_F p_1 |\Sigma_2|^{1/p_2}} Q_{p_2} \{ \mathrm{vec}(I_{p_1})^{\top} \otimes I_{p_2^2} \} (I_{p_1} \otimes K_{p_1, p_2} \otimes I_{p_2}) (K_{p_1, p_2} \otimes K_{p_1, p_2}) f_n + o_p(1).
    \end{align*}
\end{theorem}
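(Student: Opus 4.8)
The plan is to reduce to the canonical case $M = 0$, $\Sigma_1 = I_{p_1}$, $\Sigma_2 = I_{p_2}$ by equivariance, establish $\sqrt{n}$-consistency of a conveniently normalized solution of \eqref{eq:sample_structured}, linearize the fixed-point equations, and read off the answer after passing to determinant-scaled quantities; the expansion for $S_{2n}$ will then drop out of that for $S_{1n}$ by the transpose symmetry of \eqref{eq:sample_structured}. Throughout I write $\Phi_1 := \{\mathrm{vec}(I_{p_2})^{\top} \otimes I_{p_1^2}\}(I_{p_2} \otimes K_{p_2,p_1} \otimes I_{p_1})$ for the matrix appearing in the first claim.

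Since $X_i - \bar X = \Sigma_1^{1/2}(Z_i - \bar Z)\Sigma_2^{1/2}$, a pair $(S_{1n}, S_{2n})$ solves \eqref{eq:sample_structured} for $X_1, \dots, X_n$ if and only if $(\Sigma_1^{-1/2} S_{1n} \Sigma_1^{-1/2}, \Sigma_2^{-1/2} S_{2n} \Sigma_2^{-1/2})$ solves it for $Z_1, \dots, Z_n$; the vector $f_n$ is literally the same object in both problems, and $S_{1n}/|S_{1n}|^{1/p_1}$ transforms by conjugation with $\Sigma_1^{1/2}$, divided by $|\Sigma_1|^{1/p_1}$. Using $\sqrt{n}\,\mathrm{vec}\{\Sigma_1^{1/2}(\,\cdot\,)\Sigma_1^{1/2}\} = (\Sigma_1^{1/2} \otimes \Sigma_1^{1/2})\sqrt{n}\,\mathrm{vec}(\,\cdot\,)$, one checks that both sides of the first claimed identity carry the same prefactor $(\Sigma_1^{1/2}\otimes\Sigma_1^{1/2})/|\Sigma_1|^{1/p_1}$, so it suffices to prove $\sqrt{n}\,\mathrm{vec}(S_{1n}/|S_{1n}|^{1/p_1} - I_{p_1}) = (\beta_F p_2)^{-1} Q_{p_1}\Phi_1 f_n + o_p(1)$ for $X_i = Z_i$. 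As the left-hand side is the same for every solution of \eqref{eq:sample_structured}, I fix the one with $|S_{1n}| = 1$ (which exists for large $n$ by the existence theory for the matrix-normal maximum likelihood estimator), so that $S_{1n}/|S_{1n}|^{1/p_1} = S_{1n}$. By Theorem \ref{theo:consistency_separable} and then the second line of \eqref{eq:sample_structured}, $S_{1n} \rightarrow_{a.s.} I_{p_1}$ and $S_{2n} \rightarrow_{a.s.} \beta_F I_{p_2}$, where $\beta_F = \mathrm{E}(Z_{11}^2)$ and, as in the proof of Theorem \ref{theo:fisher_consistency_1}, $\mathrm{Cov}\{\mathrm{vec}(Z)\} = \beta_F I_{p_1 p_2}$, which also gives $n^{-1}\sum_i Z_i A Z_i^{\top} \rightarrow_{a.s.} \beta_F\,\mathrm{tr}(A)\, I_{p_1}$ for fixed $A$ (and likewise with the transpose).

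Put $R_{1n} := S_{1n} - I_{p_1}$ and $R_{2n} := S_{2n} - \beta_F I_{p_2}$, both $\rightarrow_{a.s.} 0$. Substituting $S_{2n}^{-1} = \beta_F^{-1} I_{p_2} - \beta_F^{-2} R_{2n} + \mathcal{O}(\|R_{2n}\|^2)$ and $S_{1n}^{-1} = I_{p_1} - R_{1n} + \mathcal{O}(\|R_{1n}\|^2)$ into \eqref{eq:sample_structured}, discarding the centering terms $\bar Z(\,\cdot\,)\bar Z^{\top} = \mathcal{O}_p(n^{-1})$, and invoking the moment convergences above, multiplication by $\sqrt{n}$ yields
\begin{align*}
    \sqrt{n}\, R_{1n} &= \frac{1}{\beta_F p_2} A_n - \frac{1}{\beta_F p_2}\,\mathrm{tr}(\sqrt{n}\, R_{2n})\, I_{p_1} + o_p(1), \\
    \sqrt{n}\, R_{2n} &= \frac{1}{p_1} B_n - \frac{\beta_F}{p_1}\,\mathrm{tr}(\sqrt{n}\, R_{1n})\, I_{p_2} + o_p(1),
\end{align*}
where $A_n := \sqrt{n}(n^{-1}\sum_i Z_i Z_i^{\top} - \beta_F p_2 I_{p_1})$ and $B_n := \sqrt{n}(n^{-1}\sum_i Z_i^{\top} Z_i - \beta_F p_1 I_{p_2})$ are $\mathcal{O}_p(1)$ by the central limit theorem and $\mathrm{E}\|Z\|_F^4 < \infty$. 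Because $(S_{1n}, S_{2n})$ is a consistent Z-estimator and the population map \eqref{eq:population_structured}, restricted to the manifold $\{|B_1| = 1\}$ on which the scale confounding is removed, has a nonsingular derivative at $(I_{p_1}, \beta_F I_{p_2})$, standard M-estimation arguments give $\sqrt{n}\, R_{1n} = \mathcal{O}_p(1)$ and $\sqrt{n}\, R_{2n} = \mathcal{O}_p(1)$. The normalization forces $0 = \log|I_{p_1} + R_{1n}| = \mathrm{tr}(R_{1n}) + \mathcal{O}(\|R_{1n}\|^2)$, hence $\mathrm{tr}(\sqrt{n}\, R_{1n}) = o_p(1)$; projecting the first relation by $Q_{p_1}$, which annihilates every $I_{p_1}$-term (in particular the trace coupling), then gives $\sqrt{n}\,\mathrm{vec}(R_{1n}) = (\beta_F p_2)^{-1} Q_{p_1}\mathrm{vec}(A_n) + o_p(1)$. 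Finally, writing $Z Z^{\top} = Z I_{p_2} Z^{\top}$ and $\mathrm{vec}(Z)\otimes\mathrm{vec}(Z) = \mathrm{vec}\{\mathrm{vec}(Z)\mathrm{vec}(Z)^{\top}\}$ and applying $\mathrm{vec}(A X B^{\top}) = (B\otimes A)\mathrm{vec}(X)$ and $K_{m,n}(A\otimes B)K_{s,t} = B\otimes A$ repeatedly shows $\Phi_1\{\mathrm{vec}(Z)\otimes\mathrm{vec}(Z)\} = \mathrm{vec}(Z Z^{\top})$ and $\Phi_1\mathrm{vec}(I_{p_1 p_2}) = p_2\,\mathrm{vec}(I_{p_1})$, so $\mathrm{vec}(A_n) = \Phi_1 f_n$; combining this with the previous display and undoing the reduction of the second paragraph yields the first identity in full generality.

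For the second identity I would exploit that replacing every $X_i$ by $X_i^{\top}$ turns \eqref{eq:sample_structured} into the identical system with the roles of $(p_1, S_{1n})$ and $(p_2, S_{2n})$ interchanged; hence $S_{2n}$ built from $X_1, \dots, X_n$ coincides with the first-factor estimator built from $X_1^{\top}, \dots, X_n^{\top}$, and under this transpose $f_n \mapsto (K_{p_1,p_2}\otimes K_{p_1,p_2}) f_n$, since $\mathrm{vec}(Z^{\top}) = K_{p_1,p_2}\mathrm{vec}(Z)$ and $(K_{p_1,p_2}\otimes K_{p_1,p_2})\mathrm{vec}(I_{p_1 p_2}) = \mathrm{vec}(I_{p_1 p_2})$. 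Feeding the transposed sample into the first identity, with $p_1$ and $p_2$ swapped, produces precisely the stated expansion, with $\{\mathrm{vec}(I_{p_1})^{\top}\otimes I_{p_2^2}\}(I_{p_1}\otimes K_{p_1,p_2}\otimes I_{p_2})(K_{p_1,p_2}\otimes K_{p_1,p_2})$ emerging as the $p_1\leftrightarrow p_2$ analogue of $\Phi_1$ precomposed with that transpose. I expect the genuinely delicate step to be the $\sqrt{n}$-consistency of the structured estimator together with the solvability of the linearized system: the system is degenerate exactly along the overall-scale direction, reflecting the confounding of the scales of $\Sigma_1$ and $\Sigma_2$, and it is the determinant normalization $|S_{1n}| = 1$ — equivalently $\mathrm{tr}(\sqrt{n}\, R_{1n}) = o_p(1)$ — that removes this degeneracy; the remaining commutation-matrix identities are routine but bookkeeping-heavy.
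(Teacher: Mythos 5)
Your proposal is correct and follows essentially the same route as the paper's proof: linearize the coupled fixed-point equations \eqref{eq:sample_structured}, use the scale normalization to annihilate the degenerate trace-coupling direction via $Q_{p_1}$, and identify $\mathrm{vec}(Z Z^{\top})$ with $\Phi_1\{\mathrm{vec}(Z)\otimes\mathrm{vec}(Z)\}$ together with $\Phi_1\mathrm{vec}(I_{p_1 p_2}) = p_2\mathrm{vec}(I_{p_1})$ (these are Lemmas \ref{lem:vec_aaprime} and \ref{lem:vec_aaprime_2} in the paper), your equivariance reduction to $\Sigma_1 = I_{p_1}$, $\Sigma_2 = I_{p_2}$ merely playing the role of Lemma \ref{lem:det_expansion} and the paper's intermediate trace normalization. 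The one step you outsource --- $\sqrt{n}(S_{1n}-I_{p_1}) = \mathcal{O}_p(1)$ via ``standard M-estimation arguments'' on the constrained manifold --- is instead extracted in the paper directly from the solved coupled linear system (whose right-hand side is $\mathcal{O}_p(1)$ by the CLT once the normalization forces $H_1 H_2 h_{1n} = 0$), which spares one from separately verifying nonsingularity of the restricted population derivative; either justification is acceptable, but you should note that your order of argument (rate first, then linearization) is what keeps the quadratic remainders and the $\mathrm{tr}(\sqrt{n}R_{1n}) = o_p(1)$ step from being circular.
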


We denote the two coefficient matrices in Theorem \ref{theo:bahadur_2} by
\begin{align}\label{eq:R1_R2_definition}
\begin{split}
    R_1 &:= \frac{1}{p_2} Q_{p_1} \{ \mathrm{vec}(I_{p_2})^{\top} \otimes I_{p_1^2} \} (I_{p_2} \otimes K_{p_2, p_1} \otimes I_{p_1}), \\
    R_2 &:= \frac{1}{p_1} Q_{p_2} \{ \mathrm{vec}(I_{p_1})^{\top} \otimes I_{p_2^2} \} (I_{p_1} \otimes K_{p_1, p_2} \otimes I_{p_2}) (K_{p_1, p_2} \otimes K_{p_1, p_2}).
\end{split}
\end{align}
The following result combines the two expansions in Theorem \ref{theo:bahadur_2} to obtain an analogous result for the Kronecker product estimator.

\begin{theorem}\label{theo:bahadur_4}
    Under the conditions of Theorem \ref{theo:bahadur_2}, we have
    \begin{align*}
        \sqrt{n} \mathrm{vec} \left( \frac{S_{2n}}{| S_{2n} |^{1/p_2}} \otimes \frac{S_{1n}}{| S_{1n} |^{1/p_1}} - \frac{\Sigma}{|\Sigma|^{1/(p_1 p_2)}} \right) = \frac{(\Sigma^{1/2} \otimes \Sigma^{1/2})}{\beta_F |\Sigma|^{1/(p_1 p_2)}} (I_{p_2} \otimes K_{p_1, p_2} \otimes I_{p_1}) \{ R_2 \otimes \mathrm{vec}(I_{p_1}) + \mathrm{vec}(I_{p_2}) \otimes R_1 \} f_n + o_p(1).
    \end{align*}
\end{theorem}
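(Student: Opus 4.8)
The plan is to combine the two expansions of Theorem \ref{theo:bahadur_2} using the standard first-order product rule for Kronecker products. Write $T_{1n} := S_{1n}/|S_{1n}|^{1/p_1}$, $T_{2n} := S_{2n}/|S_{2n}|^{1/p_2}$ and $\Theta_1 := \Sigma_1/|\Sigma_1|^{1/p_1}$, $\Theta_2 := \Sigma_2/|\Sigma_2|^{1/p_2}$, so that $\Theta_2 \otimes \Theta_1 = \Sigma/|\Sigma|^{1/(p_1 p_2)}$. Since both $T_{1n} \to_{a.s.} \Theta_1$ and $T_{2n} \to_{a.s.} \Theta_2$ by Theorem \ref{theo:consistency_separable}, the bilinearity of the Kronecker product gives
\begin{align*}
    T_{2n} \otimes T_{1n} - \Theta_2 \otimes \Theta_1 = (T_{2n} - \Theta_2) \otimes \Theta_1 + \Theta_2 \otimes (T_{1n} - \Theta_1) + (T_{2n} - \Theta_2) \otimes (T_{1n} - \Theta_1),
\end{align*}
and the last term is $o_p(1/\sqrt{n})$ because each factor is $\mathcal{O}_p(1/\sqrt{n})$. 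Applying $\mathrm{vec}(\cdot)$ to the two leading terms, I would use the identity $\mathrm{vec}(A \otimes B) = (I_{s} \otimes K_{q,t} \otimes I_{p}) \{ \mathrm{vec}(A) \otimes \mathrm{vec}(B) \}$ for $A \in \mathbb{R}^{s \times t}$, $B \in \mathbb{R}^{p \times q}$; here with $A$ of size $p_2 \times p_2$ and $B$ of size $p_1 \times p_1$ the prefactor is exactly $(I_{p_2} \otimes K_{p_1, p_2} \otimes I_{p_1})$, matching the statement. This yields
\begin{align*}
    \sqrt{n}\,\mathrm{vec}(T_{2n} \otimes T_{1n} - \Theta_2 \otimes \Theta_1) = (I_{p_2} \otimes K_{p_1, p_2} \otimes I_{p_1}) \left[ \sqrt{n}\,\mathrm{vec}(T_{2n} - \Theta_2) \otimes \mathrm{vec}(\Theta_1) + \mathrm{vec}(\Theta_2) \otimes \sqrt{n}\,\mathrm{vec}(T_{1n} - \Theta_1) \right] + o_p(1).
\end{align*}

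Next I would substitute the two expansions from Theorem \ref{theo:bahadur_2}. Writing the first as $\sqrt{n}\,\mathrm{vec}(T_{1n} - \Theta_1) = \{(\Sigma_1^{1/2} \otimes \Sigma_1^{1/2})/(\beta_F |\Sigma_1|^{1/p_1})\} R_1 f_n + o_p(1)$ and similarly for $T_{2n}$ with $(\Sigma_2^{1/2} \otimes \Sigma_2^{1/2})/(\beta_F |\Sigma_2|^{1/p_2})$ and $R_2$, the bracket becomes, up to $o_p(1)$,
\begin{align*}
    \left[ \frac{\Sigma_2^{1/2} \otimes \Sigma_2^{1/2}}{\beta_F |\Sigma_2|^{1/p_2}} R_2 f_n \right] \otimes \mathrm{vec}(\Theta_1) + \mathrm{vec}(\Theta_2) \otimes \left[ \frac{\Sigma_1^{1/2} \otimes \Sigma_1^{1/2}}{\beta_F |\Sigma_1|^{1/p_1}} R_1 f_n \right].
\end{align*}
The remaining work is purely algebraic bookkeeping with Kronecker products. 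Using $a \otimes b = (a \otimes I)(1 \otimes b)$-type identities, or more directly $(Cx) \otimes d = (C \otimes d)x$ and $d \otimes (Cx) = (d \otimes C)x$ for a scalar-free rearrangement, I would pull $f_n$ out to the right. The key point is that $\mathrm{vec}(\Theta_1) = \mathrm{vec}(I_{p_1})/|\Sigma_1|^{1/p_1}$ and $\mathrm{vec}(\Theta_2) = \mathrm{vec}(I_{p_2})/|\Sigma_2|^{1/p_2}$ because $\Theta_j$ has unit determinant — wait, more carefully, $\Theta_j = \Sigma_j/|\Sigma_j|^{1/p_j}$ is not the identity, so instead I would keep $\mathrm{vec}(\Theta_j)$ and use the factorization $\mathrm{vec}(\Theta_j) = (\Sigma_j^{1/2} \otimes \Sigma_j^{1/2})\mathrm{vec}(I_{p_j})/|\Sigma_j|^{1/p_j}$. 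Then the first summand equals $\{(\Sigma_2^{1/2} \otimes \Sigma_2^{1/2}) \otimes (\Sigma_1^{1/2} \otimes \Sigma_1^{1/2})\}/(\beta_F |\Sigma_2|^{1/p_2} |\Sigma_1|^{1/p_1}) \cdot \{R_2 \otimes \mathrm{vec}(I_{p_1})\} f_n$, and similarly the second gives the $\mathrm{vec}(I_{p_2}) \otimes R_1$ term with the same scalar prefactor, which is $1/(\beta_F |\Sigma|^{1/(p_1 p_2)})$. Finally, the Kronecker-product matrix $(\Sigma_2^{1/2} \otimes \Sigma_2^{1/2}) \otimes (\Sigma_1^{1/2} \otimes \Sigma_1^{1/2})$ must be re-expressed as $\Sigma^{1/2} \otimes \Sigma^{1/2}$; since $\Sigma^{1/2} = (\Sigma_2 \otimes \Sigma_1)^{1/2} = \Sigma_2^{1/2} \otimes \Sigma_1^{1/2}$, we have $\Sigma^{1/2} \otimes \Sigma^{1/2} = (\Sigma_2^{1/2} \otimes \Sigma_1^{1/2}) \otimes (\Sigma_2^{1/2} \otimes \Sigma_1^{1/2})$, which differs from the expression we obtained by a fixed commutation-matrix conjugation permuting the four tensor slots $(2,2,1,1) \mapsto (2,1,2,1)$.

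The main obstacle is precisely this last reconciliation of the Kronecker factor ordering: the prefactor $(I_{p_2} \otimes K_{p_1, p_2} \otimes I_{p_1})$ has to absorb exactly the commutation matrix that turns $(\Sigma_2^{1/2} \otimes \Sigma_2^{1/2}) \otimes (\Sigma_1^{1/2} \otimes \Sigma_1^{1/2})$ into $(\Sigma_2^{1/2} \otimes \Sigma_1^{1/2}) \otimes (\Sigma_2^{1/2} \otimes \Sigma_1^{1/2}) = \Sigma^{1/2} \otimes \Sigma^{1/2}$. I would verify this by the relation $K_{m,n}(A \otimes B) = (B \otimes A)K_{s,t}$ applied to the middle two blocks, checking that the commutation matrix produced on the right when moving the prefactor past the Kronecker product is reabsorbed into $\{R_2 \otimes \mathrm{vec}(I_{p_1}) + \mathrm{vec}(I_{p_2}) \otimes R_1\}$ — in fact this is why $(I_{p_2} \otimes K_{p_1, p_2} \otimes I_{p_1})$ appears on the left of that bracket in the stated formula rather than being absorbed into the $\Sigma^{1/2} \otimes \Sigma^{1/2}$ factor. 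A clean way to organize the whole argument, avoiding sign-of-permutation errors, is to track everything at the level of the four-fold tensor index $(p_2, p_2, p_1, p_1)$ versus $(p_2, p_1, p_2, p_1)$ and invoke the uniqueness of the permutation matrix realizing a given index reshuffle; once the two orderings are identified, collecting the scalar $1/(\beta_F |\Sigma|^{1/(p_1 p_2)})$ and the common matrix factor $(\Sigma^{1/2} \otimes \Sigma^{1/2})$ out front finishes the proof.
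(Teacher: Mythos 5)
Your proposal is correct and takes essentially the same route as the paper's proof: the paper invokes Lemma~3 of Magnus (1985) for the product-rule expansion and the $\mathrm{vec}(A \otimes B) = (I_{p_2} \otimes K_{p_1,p_2} \otimes I_{p_1})\{\mathrm{vec}(A) \otimes \mathrm{vec}(B)\}$ identity that you derive by hand, then substitutes Theorem~\ref{theo:bahadur_2} and finishes with exactly the commutation-matrix identity you flag as the main obstacle, namely $(I_{p_2} \otimes K_{p_1,p_2} \otimes I_{p_1})(\Sigma_2^{1/2} \otimes \Sigma_2^{1/2} \otimes \Sigma_1^{1/2} \otimes \Sigma_1^{1/2}) = (\Sigma^{1/2} \otimes \Sigma^{1/2})(I_{p_2} \otimes K_{p_1,p_2} \otimes I_{p_1})$, which leaves the permutation in front of the bracket just as in the stated formula. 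The only blemish is a harmless index slip in your general statement of the $\mathrm{vec}$-of-Kronecker identity (it should read $I_t \otimes K_{q,s} \otimes I_p$ for $A \in \mathbb{R}^{s \times t}$), which is immaterial here since both factors are square.
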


As a special case, Theorems \ref{theo:bahadur_2} and \ref{theo:bahadur_4} give the asymptotic linearization of the MLE of the matrix normal distribution into sample second moments under the assumption of Gaussian data. While the asymptotic distribution itself is well-known under Gaussianity \citep{werner2008estimation}, the linearization itself appears novel even under Gaussianity.

\section{Fourth moments of spherical matrices}\label{sec:fourth_moment_Z}

To summarize, the results in the previous section allow us to reduce the asymptotic behavior of the determinant-scaled estimators to the asymptotic behavior of the vector $f_n$. Being a second-order quantity, the limiting distribution of $f_n$ depends on the data distribution through its fourth moments, and, as such, in this section we first derive the general form of the fourth moment matrix of a matrix spherical distribution. Afterwards, we recall a characterization of matrix spherical distributions that allows parametrizing the fourth moment matrix via moments and cross-moments of the singular values of~$Z$.

All fourth moments of the matrix spherical $Z$ are collected in the matrix $A := \mathrm{E} [ \{ \mathrm{vec}(Z) \otimes \mathrm{vec}(Z) \} \{ \mathrm{vec}(Z) \otimes \mathrm{vec}(Z) \}^{\top} ]$. The double orthogonal invariance of $Z$ means that many elements of $A$ are zero (since each element $z_{jk}$ is symmetric w.r.t. zero) and many of the elements are equal to each other (since the rows of $Z$ are exchangeable, as are its columns). By enumerating the different possibilities, we observe that $A$ depends on the distribution of $Z$ through the five moments $m_1 := \mathrm{E}(z_{11}^4)$, $m_2 := \mathrm{E}(z_{11}^2 z_{12}^2)$, $m_3 := \mathrm{E}(z_{11}^2 z_{21}^2)$, $m_4 := \mathrm{E}(z_{11}^2 z_{22}^2)$ and $m_5 := \mathrm{E}(z_{11} z_{12} z_{21} z_{22})$. Furthermore, these five moments turn out to have only two degrees of freedom, as shown in the following result.

\begin{lemma}\label{lem:moment_connections}
    With the above notation, we have for matrix spherical $Z$ that
    \begin{align*}
        m_1 = 3 m_2, \quad m_2 = m_3, \quad 2m_5 = m_2 - m_4. 
    \end{align*}
\end{lemma}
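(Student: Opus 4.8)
The plan is to exploit the matrix sphericity of $Z$, i.e., invariance under $Z \mapsto O_1 Z O_2^\top$, to reduce all five fourth moments to expectations over specific low-dimensional orthogonal transformations, and then differentiate or use trigonometric identities. The key observation is that it suffices to work with $p_1 = p_2 = 2$, since all five moments involve only the entries $z_{11}, z_{12}, z_{21}, z_{22}$, and the $2 \times 2$ upper-left block of a matrix spherical $Z$ is invariant under $O_1 (\cdot) O_2^\top$ for $2 \times 2$ orthogonal $O_1, O_2$ (any $2\times 2$ orthogonal pair extends to $p_i \times p_i$ orthogonal matrices acting as the identity on the other coordinates).

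First I would establish $m_1 = 3m_2$ and $m_2 = m_3$. For $m_2 = m_3$: applying the swap of rows and columns (the commutation-type reflection) $Z \mapsto Z^\top$ is realized by an orthogonal transformation only when $p_1 = p_2$, but more directly, one uses a rotation mixing the first two rows together with a rotation mixing the first two columns. Actually the cleanest route: $\mathrm{E}(z_{11}^2 z_{12}^2)$ and $\mathrm{E}(z_{11}^2 z_{21}^2)$ are interchanged under conjugating by the transpose map on a $2 \times 2$ block, which for spherical $Z$ (symmetric in the roles of the two factors up to transposition, since $O_1 Z O_2^\top \sim Z$ forces the distribution of $Z^\top$ to also be spherical with the roles swapped — but we need same distribution). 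The robust argument: rotate only in the column space by the $2\times 2$ rotation $R_\theta$ acting on columns $1,2$. Then $z_{11} \mapsto c z_{11} - s z_{12}$, $z_{12} \mapsto s z_{11} + c z_{12}$ with $c = \cos\theta, s = \sin\theta$, while $z_{21}, z_{22}$ transform identically with the second row. Taking $\mathrm{E}$ of $z_{11}^4$ after rotation and using that moments are unchanged, expanding $(c z_{11} - s z_{12})^4$ and matching the $\theta$-independent part (or differentiating twice at $\theta = 0$), the vanishing of odd moments kills cross terms and one gets $m_1 = 3 m_2$ (the familiar ``Gaussian-like'' relation forced purely by rotational invariance of a pair of coordinates). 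Similarly rotating in the row space gives $m_1 = 3 m_3$, hence $m_2 = m_3$.

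Next, for $2m_5 = m_2 - m_4$, I would apply a simultaneous rotation: the $2\times 2$ rotation $R_\theta$ on the first two columns AND on the first two rows. Under this the $2\times 2$ block $W := \begin{pmatrix} z_{11} & z_{12} \\ z_{21} & z_{22}\end{pmatrix}$ becomes $R_\theta W R_\theta^\top$. Invariance gives that $\mathrm{E}(\phi(R_\theta W R_\theta^\top))$ is independent of $\theta$ for any polynomial $\phi$. Choosing $\phi(W) = W_{11}^2 W_{12}^2$, say (or some convenient quadratic-in-quadratic form), expanding in $\theta$, and extracting the coefficient of $\theta^2$ (equivalently $\frac{d^2}{d\theta^2}\big|_{\theta=0} = 0$) yields a linear relation among $m_1, m_2, m_3, m_4, m_5$; combining with $m_1 = 3m_2 = 3m_3$ isolates $2m_5 = m_2 - m_4$. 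One might alternatively pick $\phi(W) = (W_{11}^2 - W_{22}^2)(\cdots)$ or use the trace/determinant invariants $\mathrm{tr}(W W^\top)$ and $\mathrm{tr}(WW^\top WW^\top)$: the latter, being invariant under $W \mapsto R_\theta W R_\theta^\top$ only if one is careful (it is in fact invariant under $W \mapsto OWO^\top$), expands into a combination of the $m_i$ whose invariance is automatic, so instead one wants a non-invariant $\phi$ whose $\theta$-derivative forces the relation. I would pick $\phi(W) = W_{12} W_{21}$, compute $\mathrm{E}[\phi(R_\theta W R_\theta^\top)]$, and set its $\theta$-derivative to zero.

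The main obstacle I anticipate is bookkeeping: after applying a two-sided rotation, each entry of the transformed $2\times 2$ block is a sum of four terms, so a degree-four polynomial in those entries expands into many monomials, and one must carefully apply the vanishing of all odd-order moments (each $z_{jk}$ symmetric about zero, and also products like $\mathrm{E}(z_{11} z_{12})$, $\mathrm{E}(z_{11} z_{22})$, $\mathrm{E}(z_{11}^3 z_{12})$, etc. all vanish by sign-flip symmetries coming from sign-change diagonal orthogonal matrices) to collapse the expression down to the five surviving moments. The cleanest implementation is to differentiate $\mathrm{E}[\phi(R_\theta W R_\theta^\top)]$ twice at $\theta = 0$, since then only terms of the ``right'' parity survive and the algebra is much shorter; I expect each of the three claimed identities to drop out of one such second-derivative computation. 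A useful sanity check at the end: the matrix normal case has $m_1 = 3, m_2 = m_3 = 1, m_4 = 1, m_5 = 0$, and indeed $3 = 3\cdot 1$, $1 = 1$, $0 = 1 - 1$, consistent with all three relations.
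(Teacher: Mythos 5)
Your overall strategy --- restrict to the upper-left $2\times 2$ block of $Z$ and exploit invariance under embedded $2\times 2$ rotations --- is exactly the mechanism the paper uses; the paper simply takes $\theta=\pi/4$ and equates moments before and after the rotation rather than differentiating at $\theta=0$, which is a cosmetic difference. Your derivations of $m_1=3m_2$ (column rotation) and $m_1=3m_3$ (row rotation), hence $m_2=m_3$, are correct.

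The one concrete misstep is your final choice of test function for the third identity. The functional $\phi(W)=W_{12}W_{21}$ is quadratic in the entries of $W$, so $\mathrm{E}[\phi(R_\theta W R_\theta^{\top})]$ involves only second moments of $Z$; by sphericity these are fully determined (off-diagonal cross-moments vanish and $\mathrm{E}(z_{jk}^2)=\beta_F$ for all $j,k$), so its $\theta$-derivative is identically zero as a consequence of second-moment structure alone and can never yield a relation among the fourth moments $m_i$. You need a quartic functional. Your earlier suggestion $\phi(W)=W_{11}^2W_{12}^2$ under the simultaneous two-sided rotation does work: the coefficient of $\theta^2$ in $\mathrm{E}(\tilde w_{11}^2\tilde w_{12}^2)$ comes out to $2m_1+2m_4-8m_2+4m_5$, and setting it to zero and substituting $m_1=3m_2$ gives $2m_5=m_2-m_4$. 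The paper's route is a bit cleaner and does not require the first identity as an input: apply the one-sided $45^{\circ}$ row rotation and equate $\mathrm{E}(z_{11}z_{12}z_{21}z_{22})$ before and after; the transformed product factors as $\tfrac14(z_{11}^2-z_{21}^2)(z_{12}^2-z_{22}^2)$, whose expectation is $\tfrac12(m_2-m_4)$, giving $2m_5=m_2-m_4$ directly. With that substitution for the last step, your proof goes through.
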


To express the matrix $A$, we define the following ``building block'' matrices
\begin{align*}
    J_1 &:= \sum_{i, j = 1}^{p_1} (I_{p_2} \otimes e_i e_j^{\top} \otimes I_{p_2} \otimes e_i e_j^{\top}), \\
    J_2 &:= \sum_{i, j = 1}^{p_2} (e_i e_j^{\top} \otimes I_{p_1} \otimes e_i e_j^{\top} \otimes I_{p_1}), \\
    K^\otimes_1 &:= \sum_{i, j = 1}^{p_1} (I_{p_2} \otimes e_i e_j^{\top} \otimes I_{p_2} \otimes e_j e_i^{\top}), \\
    K^\otimes_2 &:= \sum_{i, j = 1}^{p_2} (e_i e_j^{\top} \otimes I_{p_1} \otimes e_j e_i^{\top} \otimes I_{p_1}),
\end{align*}
where $e_k$ denotes the $k$th standard basis vector and the corresponding space is evident from the indexing.

\begin{theorem}\label{theo:fourth_moment}
    For matrix spherical $Z$, we have 
\begin{align*}
    A =& \frac{1}{2} ( m_2 - m_4 ) (J_1 K^\otimes_2 + J_2 K^\otimes_1 + J_1 + J_2 + K^\otimes_1 + K^\otimes_2) + m_4 (I_{p_1^2 p_2^2} + J_1 J_2 + K^\otimes_1 K^\otimes_2).
\end{align*}
\end{theorem}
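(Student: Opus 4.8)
The plan is to identify the matrix $A$ entrywise and then recognize the resulting pattern as a linear combination of the six "building block" matrices $J_1, J_2, K^\otimes_1, K^\otimes_2, J_1K^\otimes_2, J_2K^\otimes_1$ together with $I_{p_1^2p_2^2}$, $J_1J_2$, and $K^\otimes_1K^\otimes_2$. Recall that a generic entry of $A$ is $\mathrm{E}(z_{ab}z_{cd}z_{ef}z_{gh})$, indexed by the pair of double-indices coming from $\mathrm{vec}(Z)\otimes\mathrm{vec}(Z)$ on each side. First I would use the double orthogonal invariance of $Z$: applying sign-flip orthogonal matrices $O_1 = \mathrm{diag}(\pm 1)$ and $O_2 = \mathrm{diag}(\pm 1)$ shows that $\mathrm{E}(z_{ab}z_{cd}z_{ef}z_{gh}) = 0$ unless every row index and every column index appearing among $\{a,c,e,g\}$ and $\{b,d,f,h\}$ occurs an even number of times. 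This forces the nonzero entries into a small number of index patterns, and permutation invariance of the rows (resp. columns) of $Z$ reduces each pattern to one of the five moments $m_1,\dots,m_5$; this is exactly the enumeration mentioned in the text preceding the statement, so I would carry it out carefully but briefly.

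Next I would check that each building block matrix, viewed entrywise, is the indicator of exactly one of these index patterns (up to the already-noted overlaps). Concretely: $I_{p_1^2p_2^2}$ picks out the "all four equal" pattern giving $m_1$ on its intersection with the relevant diagonal and contributing elsewhere; $J_1$ encodes the pattern where the two factors on each side share a row index but the column indices are paired across the two sides in the "straight" way, contributing $m_2$-type moments; $J_2$ is the transpose-role analogue contributing $m_3$; $K^\otimes_1$ and $K^\otimes_2$ encode the "crossed" pairings and contribute $m_5$-type terms; and the products $J_1J_2$, $K^\otimes_1K^\otimes_2$, $J_1K^\otimes_2$, $J_2K^\otimes_1$ capture the patterns where the row indices and column indices are independently paired, contributing $m_4$. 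I would tabulate which blocks are nonzero on each pattern and with what multiplicity, taking care of the diagonal overlaps (e.g. entries where a block coincides with $I_{p_1^2p_2^2}$). Then I would solve the resulting small linear system for the coefficients of $A$ in terms of $m_1,\dots,m_5$, and finally substitute the identities $m_1 = 3m_2$, $m_2 = m_3$, $2m_5 = m_2 - m_4$ from Lemma \ref{lem:moment_connections} to collapse everything to the two free parameters, obtaining the stated coefficients $\tfrac{1}{2}(m_2 - m_4)$ and $m_4$.

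The main obstacle I anticipate is bookkeeping rather than conceptual: correctly matching the tensor structure of $\mathrm{vec}(Z)\otimes\mathrm{vec}(Z)$ (which orders indices as column-major within each $\mathrm{vec}$ and then stacks) to the index layout of the Kronecker-product building blocks, and in particular tracking the overlaps between blocks on the diagonal so that the coefficients come out right. A clean way to manage this is to write $A = \mathrm{E}[\{\mathrm{vec}(Z)\mathrm{vec}(Z)^{\top}\}\otimes\{\mathrm{vec}(Z)\mathrm{vec}(Z)^{\top}\}]\cdot(\text{permutation})$ or, more directly, to fix standard basis vectors $e_i,e_j$ (rows) and $e_k,e_l$ (columns) and compute $\mathrm{E}(z_{ik}z_{i'k'}z_{i''k''}z_{i'''k'''})$ as a function purely of the equality pattern of the indices, then verify the identity block by block by pairing it against arbitrary such basis tensors on left and right. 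Since both sides of the claimed identity are symmetric matrices and each building block is itself either symmetric or appears together with its transpose, one only needs to verify equality of the quadratic forms on the finite set of rank-one tensors $\{\mathrm{vec}(e_ie_k^{\top})\otimes\mathrm{vec}(e_je_l^{\top})\}$, which reduces the whole proof to a finite case check organized by the coincidence pattern among $i,j$ and among $k,l$.
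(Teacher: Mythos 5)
Your proposal is correct and follows essentially the same route as the paper: the paper also verifies the identity block by block (via the selector matrices $T_{ij}^{\top} = e_j^{\top}\otimes e_i^{\top}\otimes I_{p_1p_2}$), using sign-flip and permutation invariance to reduce each fourth moment $\mathrm{E}(z_{ab}z_{cd}z_{ef}z_{gh})$ to one of $m_1,\ldots,m_5$ and then invoking Lemma \ref{lem:moment_connections} to collapse to the two parameters $m_2, m_4$. Your entrywise bookkeeping via quadratic forms on the basis tensors is just a reorganization of the same finite case check.
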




Next, we discuss a characterization of the family of all matrix spherical distributions. Assume, without loss of generality, that $p_1 \geq p_2$ and let $\mathcal{U}_{p_1 \times p_2}$ denote the probability distribution of a random matrix generated by taking the first $p_2$ columns of a Haar uniform $p_1 \times p_2$ orthogonal matrix. We consider the family of spherical $p_1 \times p_2$ matrices $Z$ defined as,
\begin{align}\label{eq:Z_spherical_family}
    Z = U \Lambda V^{\top},
\end{align}
where $U \sim \mathcal{U}_{p_1 \times p_2}$, $V \sim \mathcal{U}_{p_2 \times p_2}$, $\Lambda$ is a random diagonal matrix whose vector of positive diagonal elements $(\lambda_1, \ldots, \lambda_{p_2})$ has a distribution that is invariant to permutations, and $U, V, \Lambda$ are independent of each other. It is obvious that any such $Z$ has a matrix spherical distribution in the sense discussed in Section~\ref{sec:intro}. However, Theorem~9.3.9 in \cite{gupta2018matrix} states that also the reverse implication holds true, i.e., every matrix spherical distribution admits the representation \eqref{eq:Z_spherical_family}. Hence the family of $(p_1 \times p_2)$-dimensional matrix spherical distributions (with $p_1 \geq p_2$) has essentially $p_2$ ``degrees of freedom'', determined by the joint distribution of the $p_2$ singular values. This also means that we may express the moments of $Z$ using the moments of these singular values, as is done below.

\begin{theorem}\label{theo:Z_moments}
    For $Z$ as in \eqref{eq:Z_spherical_family}, we have
    \begin{align*}
        m_4 &= C^{-1} p_2 \mathrm{E}(\lambda_1^4) + C^{-1} \frac{p_2}{p_1 - 1} \left\{ (p_1 + 1)(p_2 + 1) +2 \right\} \mathrm{E}(\lambda_1^2 \lambda_2^2), \\
        m_2 - m_4 &= 2 C^{-1} p_2 \mathrm{E}(\lambda_1^4) - 2 C^{-1} p_2 \left( 1 + \frac{p_2 + 2}{p_1 - 1} \right) \mathrm{E}(\lambda_1^2 \lambda_2^2),
    \end{align*}
    where $C := p_1 (p_1 + 2) p_2 (p_2 + 2)$.
\end{theorem}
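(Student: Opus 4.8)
The plan is to compute the moments $m_4$ and $m_2 - m_4$ directly from the singular value decomposition representation $Z = U \Lambda V^{\top}$ in \eqref{eq:Z_spherical_family}, using the independence of $U$, $V$, and $\Lambda$. The entries $z_{jk}$ are bilinear in the entries of $U$ and $V$: writing $u_{ja}$ for the $(j,a)$ entry of $U$, $v_{kb}$ for that of $V$, and $\lambda_a$ for the diagonal entries of $\Lambda$, we have $z_{jk} = \sum_{a=1}^{p_2} \lambda_a u_{ja} v_{ka}$. Hence $z_{11}^2 z_{22}^2$ and $z_{11}^2 z_{12}^2$ expand into sums over quadruples of singular-value indices, and taking expectations factors as $\mathrm{E}(\text{product of } u\text{'s}) \cdot \mathrm{E}(\text{product of } v\text{'s}) \cdot \mathrm{E}(\text{product of } \lambda\text{'s})$. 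By the permutation invariance of the $\lambda_a$, the only surviving $\lambda$-moments are $\mathrm{E}(\lambda_1^4)$ (when all four indices coincide) and $\mathrm{E}(\lambda_1^2\lambda_2^2)$ (when they split into two coincident pairs); every other index pattern kills the $u$- or $v$-expectation by the sign symmetry of Haar columns. So the whole computation reduces to two ingredients: the fourth-order mixed moments of the entries of a block of a Haar orthogonal matrix, and the bookkeeping of which index patterns contribute.

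First I would assemble the needed Haar moments. For $U \sim \mathcal{U}_{p_1 \times p_2}$ I need quantities of the form $\mathrm{E}(u_{1a}^2 u_{1b}^2)$, $\mathrm{E}(u_{1a}^2 u_{2b}^2)$, and $\mathrm{E}(u_{1a} u_{1b} u_{2a} u_{2b})$ for $a = b$ and $a \neq b$; the analogous quantities for the square matrix $V \sim \mathcal{U}_{p_2 \times p_2}$ are the same formulas with $p_1$ replaced by $p_2$ and with the row/column roles as appropriate. These are standard: from the Weingarten calculus (or elementary arguments using that the columns of a Haar orthogonal matrix are uniform on the sphere with the right joint constraints) one gets, for a $p \times q$ block with $q \le p$, that $\mathrm{E}(u_{ia}^4) = 3/\{p(p+2)\}$, $\mathrm{E}(u_{ia}^2 u_{ib}^2) = 1/\{p(p+2)\}$ for $a \neq b$, $\mathrm{E}(u_{ia}^2 u_{jb}^2) = (p+1)/\{(p-1)p(p+2)\}$ for $i \neq j$, $b \neq a$, $\mathrm{E}(u_{ia}^2 u_{ja}^2) = 1/\{(p-1)p(p+2)\} \cdot (p-1) = \ldots$ — I would record the exact constants carefully — and $\mathrm{E}(u_{ia} u_{ib} u_{ja} u_{jb}) = -1/\{(p-1)p(p+2)\}$ for $i \neq j$, $a \neq b$. (For $V$, being square, $p = q = p_2$ and the same formulas apply with $p_2$.) I would cite a standard reference such as \cite{gupta2018matrix} or compute these low-order moments from scratch, since only fourth-order moments of at most two rows and two columns are needed.

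Next I would organize the expansion. For $\mathrm{E}(z_{11}^2 z_{22}^2)$: write $z_{11}^2 z_{22}^2 = \sum_{a,b,c,d} \lambda_a \lambda_b \lambda_c \lambda_d\, u_{1a} u_{1b} u_{2c} u_{2d}\, v_{1a} v_{1b} v_{2c} v_{2d}$, take expectations, and split the index set $\{a,b,c,d\}$ according to the partition it induces. Using sign symmetry, the $u$-expectation $\mathrm{E}(u_{1a}u_{1b}u_{2c}u_{2d})$ is nonzero only when the indices pair up, and similarly for $V$; combining the two constraints one finds exactly three surviving patterns: (i) $a=b=c=d$, contributing $p_2 \mathrm{E}(\lambda_1^4)$ times a product of $\mathrm{E}(u_{11}^2 u_{21}^2)$-type and $\mathrm{E}(v_{11}^2 v_{21}^2)$-type constants; (ii) $a = b \neq c = d$, contributing $p_2(p_2-1)\mathrm{E}(\lambda_1^2\lambda_2^2)$ times a product of "different-index, same-behavior" constants; (iii) $a = c \neq b = d$ (and its mirror $a = d \neq b = c$), contributing via the off-diagonal cross moments $\mathrm{E}(u_{1a}u_{1b}u_{2a}u_{2b})\mathrm{E}(v_{1a}v_{1b}v_{2a}v_{2b})$. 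I would do the same for $\mathrm{E}(z_{11}^2 z_{12}^2) = \sum \lambda_a\lambda_b\lambda_c\lambda_d\, u_{1a}u_{1b}u_{1c}u_{1d}\, v_{1a}v_{1b}v_{2c}v_{2d}$, where now all four $u$-indices share row $1$ while the $v$-indices split across rows $1$ and $2$; the surviving patterns and their multiplicities differ accordingly. Substituting the Haar constants, collecting the $\mathrm{E}(\lambda_1^4)$ and $\mathrm{E}(\lambda_1^2\lambda_2^2)$ coefficients, and simplifying (this is where the common denominator $C = p_1(p_1+2)p_2(p_2+2)$ appears) yields the two claimed formulas; in particular $m_2 - m_4$ is obtained by forming the stated linear combination.

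The main obstacle is purely combinatorial and arithmetic: correctly enumerating the surviving index patterns together with their exact multiplicities (e.g. distinguishing $a = b \neq c = d$ from $a = c \neq b = d$, and counting how many ordered quadruples fall into each) and pairing each with the right product of Haar moment constants without sign or normalization errors. Once the bookkeeping is set up, the simplification to the closed forms with denominator $C$ and the $1/(p_1-1)$ factors is mechanical but needs care; I would double-check the final expressions against the known matrix-normal case ($\lambda$'s such that $Z$ has i.i.d. standard normal entries, so $m_4 = 1$, $m_2 = 1$, $m_2 - m_4 = 0$) and against the degenerate low-dimensional cases $p_2 = 1$ (where $m_4$ and $m_2-m_4$ should reduce to the vector-spherical formulas, with the $\mathrm{E}(\lambda_1^2\lambda_2^2)$ terms dropping out).
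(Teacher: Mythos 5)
Your proposal is correct and follows essentially the same route as the paper: expand $z_{jk}=\sum_a \lambda_a u_{ja}v_{ka}$, use sign symmetry and permutation invariance to reduce the quadruple sum to the three index patterns $a=b=c=d$, $a=b\neq c=d$, and $a=c\neq b=d$ (with its mirror), and substitute the fourth-order Haar moments of $U$ and $V$ (which the paper records in its Lemma~\ref{lem:U_moments}, including the cross moment $\mathrm{E}(u_{11}u_{12}u_{21}u_{22})=-1/\{p_1(p_1+2)(p_1-1)\}$ that you state correctly). The remaining work is exactly the bookkeeping you describe, and the constants you do write down agree with the paper's.
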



\section{Hypothesis tests for covariance separability}\label{sec:h0_test}

\subsection{Test based on $n \| V_n - I_{p_1 p_2} \|_F^2$}

We next propose a test for the null hypothesis
\begin{align*}
    H_0: \mbox{the covariance structure of $\mathrm{vec}(X)$ is separable.}
\end{align*}
Our test statistic of choice is $ t_n := n \| V_n - I_{p_1 p_2} \|_F^2 $, where the matrix
\begin{align}
     V_n := \left( \frac{S_n}{|S_n|^{1/(p_1 p_2)}} \right)^{-1/2} \left( \frac{S_{2n}}{| S_{2n} |^{1/p_2}} \otimes \frac{S_{1n}}{| S_{1n} |^{1/p_1}} \right) \left( \frac{S_n}{|S_n|^{1/(p_1 p_2)}} \right)^{-1/2}
\end{align}
is expected to be close to $I_{p_1 p_2}$ under separability, meaning that large values $t_n$ give evidence against $H_0$. As our main contribution of this section, we derive the exact limiting distribution of $t_n$ under the elliptical family \eqref{eq:main_model}, for which the null hypothesis of separable covariance structure holds. We begin with the following auxiliary, delta-method type result.

\begin{theorem}\label{theo:bahadur_5}
    Let $A_{1n}, A_{2n} \in \mathcal{S}_+^p$ be sequence of random matrices satisfying $\sqrt{n} (A_{kn} - A) = \mathcal{O}_p(1)$, $k = 1, 2$, for some $A \in \mathcal{S}_+^p$. Then,
    \begin{align*}
        \sqrt{n} \mathrm{vec}(A_{1n}^{-1/2} A_{2n} A_{1n}^{-1/2} - I_p) = (A^{-1/2} \otimes A^{-1/2}) \{ \sqrt{n} \mathrm{vec} ( A_{2n} - A ) - \sqrt{n} \mathrm{vec} ( A_{1n} - A ) \} + o_p(1).
    \end{align*}
\end{theorem}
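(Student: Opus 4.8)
The plan is to prove this via a first-order Taylor expansion of the map $(A_1, A_2) \mapsto A_1^{-1/2} A_2 A_1^{-1/2}$ around the point $(A, A)$, where the value equals $I_p$. Writing $A_{1n} = A + n^{-1/2} E_{1n}$ and $A_{2n} = A + n^{-1/2} E_{2n}$ with $E_{kn} = \mathcal{O}_p(1)$, I would expand each factor. The main subtlety is the matrix square root $A_{1n}^{-1/2}$, whose derivative is not simply a Kronecker product but is governed by the Sylvester equation $A^{1/2} D + D A^{1/2} = -A^{-1/2} E_{1n} A^{-1/2}$ for the perturbation $D$ of $A^{-1/2}$ (equivalently the derivative of $A^{1/2}$ solves $A^{1/2} D' + D' A^{1/2} = E_{1n}$). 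The key observation that makes the result clean is that these square-root contributions \emph{cancel}: the $E_{1n}$-dependence enters $A_{1n}^{-1/2} A_{2n} A_{1n}^{-1/2}$ through two symmetric square-root factors whose linearized contributions, when evaluated at $A_{2n} = A + o_p(1)$ times the perturbation, add up to exactly $-A^{-1/2} E_{1n} A^{-1/2}$ by the very Sylvester identity they solve.

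Concretely, I would proceed as follows. First, reduce to the case $A = I_p$ by the substitution $B_{kn} := A^{-1/2} A_{kn} A^{-1/2}$, noting $B_{kn} \to_p I_p$ and $\sqrt{n}(B_{kn} - I_p) = (A^{-1/2} \otimes A^{-1/2}) \sqrt{n} \, \mathrm{vec}(A_{kn} - A) + o_p(1)$ after vectorization, and that $A_{1n}^{-1/2} A_{2n} A_{1n}^{-1/2}$ is similar to (in fact, up to lower order, expandable in terms of) $B_{1n}^{-1/2} B_{2n} B_{1n}^{-1/2}$; one has to be a little careful here since $A_{1n}^{-1/2} \ne A^{-1/2} B_{1n}^{-1/2}$ in general, so I would instead just work directly with the original matrices but use that $A^{1/2}$ commutes with everything at leading order. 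Second, write $A_{1n}^{-1/2} = A^{-1/2} + n^{-1/2} D_{1n} + o_p(n^{-1/2})$ where $D_{1n}$ is the unique symmetric solution of $A^{1/2} D_{1n} + D_{1n} A^{1/2} = -A^{-1/2} E_{1n} A^{-1/2}$; this expansion is justified by the smoothness of the matrix inverse-square-root on $\mathcal{S}_+^p$ together with $A_{1n} = A + \mathcal{O}_p(n^{-1/2})$. Third, substitute both expansions into $A_{1n}^{-1/2} A_{2n} A_{1n}^{-1/2}$ and collect the $n^{-1/2}$ terms:
\begin{align*}
    \sqrt{n}(A_{1n}^{-1/2} A_{2n} A_{1n}^{-1/2} - I_p) = A^{-1/2} E_{2n} A^{-1/2} + D_{1n} A^{1/2} + A^{1/2} D_{1n} + o_p(1),
\end{align*}
since $A^{-1/2} A A^{-1/2} = I_p$ kills the zeroth-order term and the cross terms involving $D_{1n}$ with $A_{2n} \approx A$ produce exactly $D_{1n} A^{1/2} + A^{1/2} D_{1n}$. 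Fourth, invoke the defining Sylvester equation to replace $D_{1n} A^{1/2} + A^{1/2} D_{1n}$ by $-A^{-1/2} E_{1n} A^{-1/2}$, giving $\sqrt{n}(A_{1n}^{-1/2} A_{2n} A_{1n}^{-1/2} - I_p) = A^{-1/2}(E_{2n} - E_{1n}) A^{-1/2} + o_p(1)$. Finally, vectorize using $\mathrm{vec}(A^{-1/2} C A^{-1/2}) = (A^{-1/2} \otimes A^{-1/2}) \mathrm{vec}(C)$ and substitute back $E_{kn} = \sqrt{n}(A_{kn} - A)$ to obtain the claimed identity.

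The main obstacle is the rigorous justification of the square-root expansion and, more importantly, making transparent the cancellation of the $D_{1n}$-terms — it is tempting to expand everything into Kronecker-product form using the known derivative formula for the matrix square root (which involves $\int_0^\infty e^{-tA^{1/2}} (\cdot) e^{-tA^{1/2}} dt$ or an eigendecomposition with divided differences), but that route is messy. The cleaner argument keeps $D_{1n}$ implicit and uses only the relation it satisfies; the one thing to check carefully is that no $E_{1n}$-dependence survives in the second-order remainder at order $n^{-1/2}$, i.e., that replacing $A_{2n}$ by $A$ in the cross terms is legitimate — this holds because $D_{1n} = \mathcal{O}_p(1)$ and $A_{2n} - A = \mathcal{O}_p(n^{-1/2})$, so the product $D_{1n}(A_{2n} - A)$ is $\mathcal{O}_p(n^{-1/2})$ and contributes only to $o_p(1)$ after multiplication by $\sqrt{n}$. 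No appeal to differentiability of $\mathrm{vec}$-level maps beyond continuous differentiability on the open cone $\mathcal{S}_+^p$ is needed, and since all $A_{kn}$ lie in $\mathcal{S}_+^p$ with probability tending to one, the expansions are valid on that event.
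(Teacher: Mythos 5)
Your proposal is correct and follows essentially the same route as the paper's proof: the paper likewise obtains the Sylvester-type identity $\sqrt{n}(A_{1n}^{-1/2}-A^{-1/2})A^{1/2} + A^{1/2}\sqrt{n}(A_{1n}^{-1/2}-A^{-1/2}) = -A^{-1/2}\sqrt{n}(A_{1n}-A)A^{-1/2} + o_p(1)$ by linearizing $A_{1n}^{-1/2}A_{1n}A_{1n}^{-1/2}=I_p$, substitutes it into the linearized product, and vectorizes. Your version merely makes the justification of the square-root expansion (smoothness of the inverse square root on $\mathcal{S}_+^p$) slightly more explicit; no substantive difference.
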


Using now Theorems \ref{theo:bahadur_1}, \ref{theo:bahadur_4} and \ref{theo:bahadur_5}, we obtain the following corollary, where the asymptotic behavior of $\sqrt{n} \mathrm{vec}(V_n - I_{p_1 p_2})$ is reduced to that of the vectorized second moment $f_n$ defined in Section \ref{sec:estimation}. 

\begin{corollary}\label{cor:bahadur_6}
    Letting $V_n$ be as in \eqref{eq:vn_definition}, we have
    \begin{align*}
        \sqrt{n} \mathrm{vec}(V_n - I_{p_1 p_2}) = \frac{1}{\beta_F} [ (I_{p_2} \otimes K_{p_1, p_2} \otimes I_{p_1}) \{ R_2 \otimes \mathrm{vec}(I_{p_1}) + \mathrm{vec}(I_{p_2}) \otimes R_1 \} - Q_{p_1 p_2} ] f_n + o_p(1),
    \end{align*}
    where $R_1, R_2$ are as in \eqref{eq:R1_R2_definition}.
\end{corollary}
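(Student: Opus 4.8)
The plan is to apply Theorem \ref{theo:bahadur_5} with the two sequences $A_{1n} = S_n/|S_n|^{1/(p_1 p_2)}$ and $A_{2n} = (S_{2n}/|S_{2n}|^{1/p_2}) \otimes (S_{1n}/|S_{1n}|^{1/p_1})$, taking the common limit to be $A = \Sigma/|\Sigma|^{1/(p_1 p_2)}$. First I would verify the hypothesis of Theorem \ref{theo:bahadur_5}, namely that $\sqrt{n}(A_{kn} - A) = \mathcal{O}_p(1)$ for $k = 1, 2$: this is immediate from Theorems \ref{theo:bahadur_1} and \ref{theo:bahadur_4}, since both right-hand sides there are bounded linear images of $f_n$, and $f_n = \mathcal{O}_p(1)$ by the central limit theorem (using the finite fourth moment assumption on $F$). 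With the hypothesis in place, Theorem \ref{theo:bahadur_5} gives
\begin{align*}
    \sqrt{n} \mathrm{vec}(V_n - I_{p_1 p_2}) = (A^{-1/2} \otimes A^{-1/2}) \bigl\{ \sqrt{n} \mathrm{vec}(A_{2n} - A) - \sqrt{n} \mathrm{vec}(A_{1n} - A) \bigr\} + o_p(1),
\end{align*}
and since $A^{-1/2} = |\Sigma|^{1/(2 p_1 p_2)} \Sigma^{-1/2}$, the prefactor is $|\Sigma|^{1/(p_1 p_2)} (\Sigma^{-1/2} \otimes \Sigma^{-1/2})$.

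Next I would substitute the linearizations from Theorems \ref{theo:bahadur_1} and \ref{theo:bahadur_4} for the two bracketed terms. Both of those expansions carry the common prefactor $(\Sigma^{1/2} \otimes \Sigma^{1/2})/(\beta_F |\Sigma|^{1/(p_1 p_2)})$, so after multiplying on the left by $|\Sigma|^{1/(p_1 p_2)} (\Sigma^{-1/2} \otimes \Sigma^{-1/2})$ the determinant powers cancel and, crucially, $(\Sigma^{-1/2} \otimes \Sigma^{-1/2})(\Sigma^{1/2} \otimes \Sigma^{1/2}) = I_{p_1^2 p_2^2}$, so the $\Sigma$-dependence disappears entirely. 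What remains is
\begin{align*}
    \sqrt{n} \mathrm{vec}(V_n - I_{p_1 p_2}) = \frac{1}{\beta_F} \bigl[ (I_{p_2} \otimes K_{p_1, p_2} \otimes I_{p_1}) \{ R_2 \otimes \mathrm{vec}(I_{p_1}) + \mathrm{vec}(I_{p_2}) \otimes R_1 \} f_n - Q_{p_1 p_2} f_n \bigr] + o_p(1),
\end{align*}
which is exactly the claimed identity once $f_n$ is factored out on the right.

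The only genuine subtlety — and the step I would be most careful about — is the bookkeeping in the subtraction: the Theorem \ref{theo:bahadur_4} term must be identified with $A_{2n}$ (the Kronecker product estimator) and the Theorem \ref{theo:bahadur_1} term with $A_{1n}$ (the unstructured estimator), so that the sign in front of $Q_{p_1 p_2}$ is negative and the sign in front of the $R_1, R_2$ combination is positive, matching the order $A_{2n} - A_{1n}$ in Theorem \ref{theo:bahadur_5}. I would also double-check that the prefactor $(\Sigma^{1/2} \otimes \Sigma^{1/2})$ appearing in Theorems \ref{theo:bahadur_1} and \ref{theo:bahadur_4} is literally the same matrix (it is, since $\Sigma = \Sigma_2 \otimes \Sigma_1$ and $\Sigma^{1/2} = \Sigma_2^{1/2} \otimes \Sigma_1^{1/2}$), so that the cancellation with $(\Sigma^{-1/2} \otimes \Sigma^{-1/2})$ is exact rather than merely up to a commutation matrix. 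Beyond that, the argument is a direct composition of three already-established linearizations with the algebraic fact that $o_p(1)$ terms are preserved under multiplication by the fixed bounded matrix $(\Sigma^{-1/2} \otimes \Sigma^{-1/2})$; no further estimates are needed.
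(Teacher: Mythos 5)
Your proposal is correct and is exactly the argument the paper intends (the paper gives no separate proof, stating only that the corollary follows by combining Theorems \ref{theo:bahadur_1}, \ref{theo:bahadur_4} and \ref{theo:bahadur_5}): apply Theorem \ref{theo:bahadur_5} to $A_{1n}=S_n/|S_n|^{1/(p_1p_2)}$ and $A_{2n}$ the Kronecker-product estimator, substitute the two linearizations, and observe that the prefactor $(A^{-1/2}\otimes A^{-1/2})=|\Sigma|^{1/(p_1p_2)}(\Sigma^{-1/2}\otimes\Sigma^{-1/2})$ cancels the common factor $(\Sigma^{1/2}\otimes\Sigma^{1/2})/(\beta_F|\Sigma|^{1/(p_1p_2)})$ up to $1/\beta_F$. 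Your sign bookkeeping and the check that $\Sigma^{1/2}=\Sigma_2^{1/2}\otimes\Sigma_1^{1/2}$ are both correct, so nothing is missing.
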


Combining the linearization in Corollary \ref{cor:bahadur_6} with the fourth moments derived in Theorem \ref{theo:fourth_moment} then leads, after tedious Kronecker product algebra to the following result, of which Theorem \ref{theo:main_limiting} in Section \ref{sec:intro} is then a restatement.

\begin{theorem}\label{theo:main_limiting_2}
    The test statistic $t_n$ satisfies
    \begin{align*}
        \beta_F^2 t_n \rightsquigarrow ( m_4 + m_2 ) \chi^2_{\frac{1}{4}(p_1 + 2)(p_1 - 1)(p_2 + 2)(p_2 - 1)} + ( 3 m_4 - m_2 ) \chi^2_{\frac{1}{4} p_1 p_2 (p_1 - 1) (p_2 - 1)},
    \end{align*}
    as $n \rightarrow \infty$.
\end{theorem}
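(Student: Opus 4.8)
\textbf{Proof plan for Theorem \ref{theo:main_limiting_2}.}

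The plan is to transport the problem entirely to the level of the vectorized second moment $f_n$ via Corollary \ref{cor:bahadur_6}, and then to diagonalize the resulting quadratic form. First I would write $\sqrt{n}\,\mathrm{vec}(V_n - I_{p_1 p_2}) = \beta_F^{-1} W f_n + o_p(1)$, where $W := (I_{p_2} \otimes K_{p_1, p_2} \otimes I_{p_1}) \{ R_2 \otimes \mathrm{vec}(I_{p_1}) + \mathrm{vec}(I_{p_2}) \otimes R_1 \} - Q_{p_1 p_2}$ is the deterministic matrix from the corollary. Since $t_n = \|\sqrt{n}\,\mathrm{vec}(V_n - I_{p_1 p_2})\|_F^2$ (using that $\mathrm{vec}$ is an isometry), the continuous mapping theorem gives $\beta_F^2 t_n = f_n^{\top} W^{\top} W f_n + o_p(1)$. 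By the multivariate CLT, $f_n \rightsquigarrow \mathcal{N}(0, \Xi)$ where $\Xi = A - \beta_F^2 \mathrm{vec}(I_{p_1p_2})\mathrm{vec}(I_{p_1p_2})^{\top}$ is the covariance of $\mathrm{vec}(Z)\otimes\mathrm{vec}(Z)$, with $A$ the fourth moment matrix of Theorem \ref{theo:fourth_moment}. Hence $\beta_F^2 t_n \rightsquigarrow g^{\top} W^{\top} W g$ with $g \sim \mathcal{N}(0, \Xi)$, which is a generalized chi-squared, i.e., a nonnegative linear combination of independent $\chi^2_1$ variables whose weights are the eigenvalues of $W^{\top} W \Xi$ (equivalently of $\Xi^{1/2} W^{\top} W \Xi^{1/2}$), with multiplicities given by the eigenvalue multiplicities.

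The substance of the proof is therefore the spectral analysis of $M := W \Xi W^{\top}$ (or $W^{\top} W \Xi$; they have the same nonzero spectrum). The key structural observations I would exploit are: (i) $W$ annihilates $\mathrm{vec}(I_{p_1 p_2})$ — both $Q_{p_1 p_2}$ and the $R_k$-blocks kill that direction — so the rank-one correction in $\Xi$ is irrelevant and we may replace $\Xi$ by $A$; (ii) $W$, being built from $Q_{p_1 p_2}$, $K$-matrices and the symmetrizing $R_k$'s, commutes with the left/right orthogonal action and with the commutation matrix $K_{p_1 p_2, p_1 p_2}$, so $W$, $W^{\top} W$ and $A$ are simultaneously block-diagonalizable along the isotypic decomposition of $\mathbb{R}^{p_1^2 p_2^2}$ under the relevant symmetry group; and (iii) $A$ itself, by Theorem \ref{theo:fourth_moment}, is a linear combination of the commuting building blocks $J_1, J_2, K^\otimes_1, K^\otimes_2$ and the identity, so on each isotypic component $A$ reduces to an explicit scalar combination of $m_2, m_4$. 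Carrying out this decomposition, one expects exactly two surviving eigenspaces of $W^{\top} W A$: one of dimension $\tfrac14(p_1+2)(p_1-1)(p_2+2)(p_2-1)$ with eigenvalue proportional to $m_4 + m_2$, and one of dimension $\tfrac14 p_1 p_2 (p_1-1)(p_2-1)$ with eigenvalue proportional to $3m_4 - m_2$; matching the dimension count against the degrees of freedom $\tfrac12(p_1^2-1)(p_2^2-1) + \tfrac12(p_1-1)(p_2-1)$ confirms that $W^{\top}W A$ has no other nonzero eigenvalues. Independence of the two $\chi^2$ variates follows because the two eigenspaces are orthogonal and $g$ is Gaussian.

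The main obstacle is step two: the bookkeeping needed to identify the eigenspaces of $W^{\top} W$ and to verify that $A$ acts as the claimed scalars on each. Concretely, one must expand $W^{\top} W$ — a product involving $Q_{p_1 p_2}$, two layers of commutation matrices, and the $R_k$'s (each itself a product of a $Q$-projection, a $\mathrm{vec}(I)^{\top}\otimes I$ contraction, and $K$-matrices) — into a sum of terms in the $\{J_i, K^\otimes_i, I\}$ algebra, simplify using $J_i^2 = p_i J_i$, $J_i K^\otimes_i = K^\otimes_i J_i$, $(K^\otimes_i)^2 = I$, and the interaction identities between the index-$1$ and index-$2$ families, and then recognize the idempotents cutting out the two target subspaces (roughly, the ``symmetric traceless $\otimes$ symmetric traceless'' part and an antisymmetric-type part). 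This is the ``tedious Kronecker product algebra'' alluded to before the statement; the conceptual content is light but the computation is long, and I would organize it by first reducing everything to the $p_1$-side and $p_2$-side algebras separately and then taking Kronecker products, so that the two-dimensional ($m_2, m_4$) dependence and the two multiplicity factors emerge as products of one-dimensional contributions.
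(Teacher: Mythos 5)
Your plan is correct and follows essentially the same route as the paper's proof: reduce $\beta_F^2 t_n$ to the quadratic form $f_n^\top W^\top W f_n$ via Corollary \ref{cor:bahadur_6}, note that $W$ kills $\mathrm{vec}(I_{p_1p_2})$ so the rank-one centering term in the covariance of $f_n$ is irrelevant, and then diagonalize $\Xi^{1/2}W^\top W\Xi^{1/2}$ (the paper's $\Omega^{1/2}B\Omega^{1/2}$) using the commuting algebra generated by $J_1,J_2,K^\otimes_1,K^\otimes_2$, obtaining exactly the two eigenprojections with eigenvalues $m_4+m_2$ and $3m_4-m_2$ and the stated traces. The only part you leave unexecuted is the algebraic core — the paper's shortcut there is to show $W^\top W = I - L_1 - L_2 + L_1L_2$ (with $L_i = J_i/p_i$) and to split $\Omega$ into the mutually orthogonal symmetrizer/antisymmetrizer projections $\tfrac14(I\pm K^\otimes_1\pm K^\otimes_2+K^\otimes_1K^\otimes_2)$ — but your organization of that computation is sound.
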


Using the representation \eqref{eq:Z_spherical_family} and Theorem \ref{theo:Z_moments} it follows that the weight of the second $\chi^2$-distribution in Theorem~\ref{theo:main_limiting_2} can be written as
\begin{align*}
    3 m_4(Z) - m_2(Z) = \frac{2}{p_1 (p_1 - 1)} \mathrm{E}(\lambda_1^2 \lambda_2^2),
\end{align*}
showing that the weight is indeed always positive.

To use the test in practice, we still need to estimate $m_2/\beta_F^2$ and $m_4/\beta_F^2$. The following lemma shows that this can be achieved using the fourth moments of the standardized observations.

\begin{lemma}\label{lem:m2_and_m4}
    For matrix spherical $Z$, we have
    \begin{align*}
        \frac{1}{p_1 p_2}\mathrm{E}\| Z \|_F^2 &= \beta_F, \\
        \frac{1}{p_1 p_2}\mathrm{E}\| Z \|_F^4 &= m_2 (p_1 + p_2 + 1) + m_4 (p_1 - 1)(p_2 - 1), \\
        \frac{1}{p_1 p_2} \sum_{j = 1}^{p_1} \sum_{k = 1}^{p_2}  \mathrm{E} (z_{jk}^4) &= 3m_2.
    \end{align*}
\end{lemma}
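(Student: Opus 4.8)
The plan is to compute each of the three expectations by expanding the relevant power of the Frobenius norm into a sum of monomials in the entries $z_{jk}$ and then invoking the symmetry of the matrix spherical distribution to collapse equal monomials. First I would write $\|Z\|_F^2 = \sum_{j,k} z_{jk}^2$, so that $\mathrm{E}\|Z\|_F^2 = \sum_{j,k} \mathrm{E}(z_{jk}^2) = p_1 p_2 \mathrm{E}(z_{11}^2) = p_1 p_2 \beta_F$, where the second equality uses that row- and column-exchangeability of $Z$ forces all the second moments $\mathrm{E}(z_{jk}^2)$ to coincide. Dividing by $p_1 p_2$ gives the first identity. The third identity is equally immediate: $\sum_{j,k} \mathrm{E}(z_{jk}^4) = p_1 p_2 \mathrm{E}(z_{11}^4) = p_1 p_2 m_1$, and by Lemma \ref{lem:moment_connections} we have $m_1 = 3 m_2$, so dividing by $p_1 p_2$ yields $3 m_2$.

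The second identity is the only one requiring real bookkeeping. I would expand
\begin{align*}
    \|Z\|_F^4 = \left( \sum_{j,k} z_{jk}^2 \right)^2 = \sum_{j,k} \sum_{l,m} z_{jk}^2 z_{lm}^2,
\end{align*}
and then classify the index pairs $\{(j,k),(l,m)\}$ according to how many coordinates they share. There are exactly four cases, matching the five fourth-moment types identified before Lemma \ref{lem:moment_connections}: (a) $(j,k)=(l,m)$, contributing $p_1 p_2$ terms each equal to $m_1 = 3m_2$; (b) $j=l$ but $k\neq m$ (same row, different columns), contributing $p_1 p_2 (p_2-1)$ terms each equal to $m_2$; (c) $k=m$ but $j\neq l$ (same column, different rows), contributing $p_1 p_2 (p_1-1)$ terms each equal to $m_3 = m_2$; and (d) $j\neq l$ and $k\neq m$ (disjoint), contributing $p_1 p_2 (p_1-1)(p_2-1)$ terms each equal to $m_4$. (Note that $\mathrm{E}(z_{jk}^2 z_{lm}^2)$ depends only on the coincidence pattern, again by exchangeability, and all cross-terms with odd powers — which do not arise here since every factor is a square — would vanish.) Summing the contributions gives
\begin{align*}
    \mathrm{E}\|Z\|_F^4 = p_1 p_2 \big[ 3 m_2 + (p_2 - 1) m_2 + (p_1 - 1) m_2 + (p_1 - 1)(p_2 - 1) m_4 \big] = p_1 p_2 \big[ m_2 (p_1 + p_2 + 1) + m_4 (p_1 - 1)(p_2 - 1) \big],
\end{align*}
and dividing by $p_1 p_2$ completes the proof.

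There is no serious obstacle here; the only thing to be careful about is the combinatorial enumeration in the expansion of $\|Z\|_F^4$ — getting the multiplicities $p_1 p_2(p_2-1)$ and $p_1 p_2(p_1-1)$ the right way around, and remembering to fold the $m_1$ term into the $m_2$ count via $m_1 = 3m_2$ so that the coefficient of $m_2$ comes out as $3 + (p_1-1) + (p_2-1) = p_1 + p_2 + 1$ rather than something else. Everything else is a direct consequence of the exchangeability of the rows and columns of $Z$ together with Lemma \ref{lem:moment_connections}.
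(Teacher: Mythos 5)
Your proof is correct, and for the first and third identities it coincides with the paper's (direct expansion plus exchangeability, and $m_1 = 3m_2$ from Lemma \ref{lem:moment_connections}). For the second identity, however, you take a genuinely different route. The paper writes $\mathrm{tr}(ZZ^{\top}) = h^{\top}\{\mathrm{vec}(Z)\otimes\mathrm{vec}(Z)\}$ with $h := \sum_{i,j}\mathrm{vec}(e_ie_j^{\top})\otimes\mathrm{vec}(e_ie_j^{\top})$ and evaluates $\mathrm{E}\|Z\|_F^4 = h^{\top} A h$ using the explicit form of the fourth-moment matrix $A$ from Theorem \ref{theo:fourth_moment}, computing $h^{\top} J_1 h$, $h^{\top} K_1^{\otimes} h$, etc., term by term; this yields the intermediate form $(m_2-m_4)(p_1+p_2+1) + m_4(p_1p_2+2)$ before simplification. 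You instead expand $\bigl(\sum_{j,k} z_{jk}^2\bigr)^2$ directly and classify the $p_1^2p_2^2$ monomials $\mathrm{E}(z_{jk}^2 z_{lm}^2)$ by the coincidence pattern of the indices, which by row- and column-exchangeability reduces everything to the four moment types $m_1, m_2, m_3, m_4$; your multiplicities $p_1p_2$, $p_1p_2(p_2-1)$, $p_1p_2(p_1-1)$, $p_1p_2(p_1-1)(p_2-1)$ are all correct and sum to $p_1^2p_2^2$. Your argument is more elementary and self-contained (it needs only Lemma \ref{lem:moment_connections}, not Theorem \ref{theo:fourth_moment}), whereas the paper's route reuses the machinery of $A$ already in place and incidentally serves as a consistency check on that theorem; both arrive at the same coefficient $p_1+p_2+1$ of $m_2$ after folding $m_1=3m_2$ and $m_3=m_2$ into the count.
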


Given a random sample $X_1, \ldots, X_n$ from our model, let now $Y_1, \ldots , Y_n$ denote the standardized sample $Y_i = S_{1n}^{-1/2} (X_i - \Bar{X}) S_{2n}^{-1/2}$. Furthermore, let
\begin{align*}
    d_{1n} := \frac{1}{p_1 p_2 n} \sum_{i = 1}^n \| Y_i \|_F^2, \quad d_{2n} := \frac{1}{p_1 p_2 n} \sum_{i = 1}^n \| Y_i \|_F^4, \quad d_{3n} := \frac{1}{p_1 p_2 n} \sum_{i = 1}^n \sum_{j = 1}^{p_1} \sum_{k = 1}^{p_2} y_{i, jk}^4, 
\end{align*}
using which we construct the estimators
\begin{align*}
    t_{1n} := \frac{d_{2n} + \frac{1}{3}(p_1 p_2 - 2 p_1 - 2 p_2) d_{3n}}{(p_1 - 1)(p_2 - 1) d_{1n}^2}, \quad t_{2n} := \frac{3 d_{2n} - \frac{1}{3}(p_1 + 2)(p_2 + 2) d_{3n} }{(p_1 - 1)(p_2 - 1) d_{1n}^2}.
\end{align*}
By Lemma \ref{lem:m2_and_m4} and the law of large numbers, these estimators satisfy $t_{1n} \rightarrow_P (m_4 + m_2)/\beta_F^2$ and $t_{2n} \rightarrow_P (3 m_4 - m_2)/\beta_F^2$, as $n \rightarrow \infty$, allowing us to approximate the limiting distribution of $t_n$ in Theorem \ref{theo:main_limiting_2}.

\subsection{Wald-type version of the test}

By Corollary \ref{cor:bahadur_6}, the vector $\sqrt{n} \mathrm{vec}(V_n - I_{p_1 p_2})$ has a limiting normal distribution with some asymptotic covariance matrix $\Psi$. Due to the symmetry and invariance properties of $V_n$, the matrix $\Psi$ is singular and, thus, as an alternative version of the test, we consider using the test statistic
\begin{align*}
    w_n = n \mathrm{vec}(V_n - I_{p_1 p_2})' \hat{\Upsilon} \mathrm{vec}(V_n - I_{p_1 p_2}),
\end{align*}
where the matrix $\hat{\Upsilon} \in \mathbb{R}^{p_1^2 p_2^2 \times p_1^2 p_2^2}$ is an estimator of the Moore-Penrose pseudoinverse $\Psi^{+}$, whose exact form is given below. Our next result shows that, as expected, the quantity $w_n$ has a limiting $\chi^2$-distribution with degrees of freedom equal to $\mathrm{rank}(\Psi) = \frac{1}{2} (p_1^2 - 1) (p_2^2 - 1) + \frac{1}{2} (p_1 - 1) (p_2 - 1)$.

\begin{theorem}\label{theo:extra_theorem}
    The test statistic $w_n$ satisfies
    \begin{align*}
        w_n \rightsquigarrow \chi^2_{\frac{1}{2} (p_1^2 - 1) (p_2^2 - 1) + \frac{1}{2} (p_1 - 1) (p_2 - 1)}
    \end{align*}
    as $n \rightarrow \infty$, where
    \begin{align*}
        \hat{\Upsilon} = \frac{1}{t_{1n}} B_0 G_1 B_0' + \frac{1}{t_{2n}} B_0 G_2 B_0'
    \end{align*}
    and the matrices $B_0, G_1, G_2$ are defined in the proof of this theorem.
\end{theorem}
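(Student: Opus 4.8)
The plan is to build $w_n$ as a quadratic form in the asymptotically normal vector $\sqrt{n}\,\mathrm{vec}(V_n - I_{p_1p_2})$ and then invoke the standard fact that if $Y \sim \mathcal{N}(0, \Psi)$ and $\hat\Upsilon \to_P \Psi^+$, then $Y'\hat\Upsilon Y \rightsquigarrow \chi^2_{\mathrm{rank}(\Psi)}$ (the key structural requirement being that $\Psi \Psi^+ \Psi = \Psi$ and that the random vector lives in the column space of $\Psi$). Concretely, by Corollary \ref{cor:bahadur_6} we have $\sqrt{n}\,\mathrm{vec}(V_n - I_{p_1p_2}) = D f_n + o_p(1)$ where $D := \beta_F^{-1}[(I_{p_2}\otimes K_{p_1,p_2}\otimes I_{p_1})\{R_2\otimes\mathrm{vec}(I_{p_1}) + \mathrm{vec}(I_{p_2})\otimes R_1\} - Q_{p_1p_2}]$. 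Since $f_n$ is (a centered, normalized) sum of i.i.d. terms with finite fourth moments, the CLT gives $f_n \rightsquigarrow \mathcal{N}(0, \mathrm{Cov}[\mathrm{vec}(Z)\otimes\mathrm{vec}(Z)])$, and the covariance of that limit is expressible through the fourth-moment matrix $A$ of Theorem \ref{theo:fourth_moment} minus $\beta_F^2\,\mathrm{vec}(I_{p_1p_2})\mathrm{vec}(I_{p_1p_2})^\top$. Hence $\Psi = D\big(A - \beta_F^2\,\mathrm{vec}(I)\mathrm{vec}(I)^\top\big)D'$.

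The substantive work is to diagonalize $\Psi$ explicitly. I would exploit the fact, already visible in Theorem \ref{theo:main_limiting_2}, that $\Psi$ has exactly two distinct nonzero eigenvalues, $\lambda_1 = (m_4+m_2)/\beta_F^2$ with multiplicity $\frac14(p_1+2)(p_1-1)(p_2+2)(p_2-1)$ and $\lambda_2 = (3m_4-m_2)/\beta_F^2$ with multiplicity $\frac14 p_1p_2(p_1-1)(p_2-1)$. So I would construct two orthonormal bases $B_1, B_2$ (columns spanning the two eigenspaces) and set $B_0 = [B_1, B_2]$, $G_1 = \mathrm{diag}(I, 0)$ selecting the first block, $G_2 = \mathrm{diag}(0, I)$ selecting the second. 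Then $\Psi = \lambda_1 B_0 G_1 B_0' + \lambda_2 B_0 G_2 B_0'$ and, because the $B_i$ are orthonormal and the eigenspaces are orthogonal, $\Psi^+ = \lambda_1^{-1} B_0 G_1 B_0' + \lambda_2^{-1} B_0 G_2 B_0'$. Replacing the unknown weights $\lambda_1, \lambda_2$ by the consistent estimators $t_{1n}, t_{2n}$ from the previous subsection yields exactly the claimed $\hat\Upsilon$. The identification of the two eigenspaces should follow from the same Kronecker-product decomposition used to prove Theorem \ref{theo:main_limiting_2}: there the quadratic form $\beta_F^2 t_n = \|\sqrt{n}\,\mathrm{vec}(V_n - I)\|^2$ was split along an orthogonal decomposition of the range of $D$ into pieces on which $\Psi$ acts as a scalar, and $B_1, B_2$ are precisely orthonormal bases of those pieces. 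I would therefore define $B_0, G_1, G_2$ as those bases and projections, state that $\mathrm{range}(\Psi) = \mathrm{range}(B_0)$ with $\dim = \frac12(p_1^2-1)(p_2^2-1) + \frac12(p_1-1)(p_2-1)$, and note $B_0'B_0 = I$.

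With $\Psi$ and $\Psi^+$ in hand the conclusion is routine: $\hat\Upsilon \to_P \Psi^+$ by $t_{1n}\to_P\lambda_1$, $t_{2n}\to_P\lambda_2$ and continuity; the random vector $\sqrt{n}\,\mathrm{vec}(V_n-I) = Df_n + o_p(1)$ lies (asymptotically) in $\mathrm{range}(D) = \mathrm{range}(\Psi)$, so Slutsky plus the continuous mapping theorem give $w_n \rightsquigarrow Y'\Psi^+ Y$ for $Y \sim \mathcal{N}(0,\Psi)$; and writing $Y = \Psi^{1/2}\eta$ with $\eta\sim\mathcal{N}(0,I_{\mathrm{rank}(\Psi)})$ on the range, $Y'\Psi^+Y = \eta'\eta \sim \chi^2_{\mathrm{rank}(\Psi)}$. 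The main obstacle is the bookkeeping needed to pin down the two eigenspaces $B_1, B_2$ concretely — i.e., reusing the orthogonal decomposition from the proof of Theorem \ref{theo:main_limiting_2} and verifying that on each piece $\Psi$ really is a multiple of the identity with the stated multiplicity, rather than merely having those two eigenvalues. Once that spectral decomposition is nailed down (which the proof of Theorem \ref{theo:main_limiting_2} essentially already does), everything else is a short Slutsky-and-CMT argument.
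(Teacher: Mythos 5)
Your proposal is correct and follows essentially the same route as the paper: there, too, one writes $\Psi = \beta_F^{-2} B_0 \Omega B_0'$ with $B_0$ the linearization matrix from Corollary \ref{cor:bahadur_6}, reuses the projections $G_1, G_2$ from the proof of Theorem \ref{theo:main_limiting_2} to obtain $\Psi = \frac{m_4+m_2}{\beta_F^2} B_0 G_1 B_0' + \frac{3m_4-m_2}{\beta_F^2} B_0 G_2 B_0'$ with $B_0 G_1 B_0'$ and $B_0 G_2 B_0'$ mutually orthogonal projections, inverts the two eigenvalues to get $\Psi^+$, substitutes the consistent estimators $t_{1n}, t_{2n}$, and concludes $w_n \rightsquigarrow \chi^2_{\mathrm{rank}(\Psi)}$. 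The only cosmetic difference is that you would define $B_0, G_1, G_2$ through an abstract orthonormal eigenbasis of the two eigenspaces, whereas the paper retains the explicit closed-form matrices and verifies the required projection identities algebraically (via $H B_0' = B_0'$ and the multiplication table from Theorem \ref{theo:main_limiting_2}); the resulting $\hat{\Upsilon}$ is the same operator.
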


\section{Simulations}\label{sec:simulations}

We next perform a simulation experiment to evaluate the finite-sample performance of the proposed methods. As a benchmark, we took the Gaussian likelihood ratio test (LRT) \citep{lu2005likelihood, mitchell2005testing} which is the ``default'' test for separability under continuous data. \textsf{R}-implementations of all three tests can be found in \url{https://github.com/jmvirta/ellipticalSeparabilityTest/}, where the CDF of the sum of scaled chi-squares in Theorem \ref{theo:main_limiting_2} is approximated using the implementation of Imhof's method in the package \texttt{CompQuadForm} \citep{Rcompquadform}. For heavy-tailed data it may in some cases with lower $n$ happen that $t_{2n} < 0$ in which case we set $t_{2n}$ to zero (without compromising its consistency) and drop the $B_0 G_2 B_0'$-part from $\hat{\Upsilon}$ in Theorem~\ref{theo:extra_theorem}.

We generate samples of size $n \in \{100, 200, 400, 800, 1600, 3200\}$ from the matrix-variate $t$-distribution $T_{p_1 \times p_2}(0, I_{p_1}, I_{p_2}, \nu)$ using the degrees of freedom $\nu \in \{ 3, 5, 7, \infty \}$ (for $\nu = \infty$, the distribution equals the standard matrix-variate normal). Restricting to identity covariance matrices and zero mean is without loss of generality since the tests are affine invariant in the sense that both samples $X_1, \ldots, X_n$ and $M + A_1 X_1 A_2^{\top}, \ldots, M + A_1 X_n A_2^{\top}$ give identical test statistic values. We consider two choices for the dimensions $(p_1, p_2) \in \{ (3, 3), (5, 5) \}$. Additionally, to create data for which the null hypothesis of separability fails, we multiply the $(1, 1)$-elements of the sample matrices by $(1 + \tau/\sqrt{n})$ where $\tau \in \{ 0, 1, 2, 3, 4, 5 \}$, creating a sequence of increasingly deviating local alternatives.

\begin{figure}[ht]
    \centering
    \includegraphics[width=0.85\linewidth]{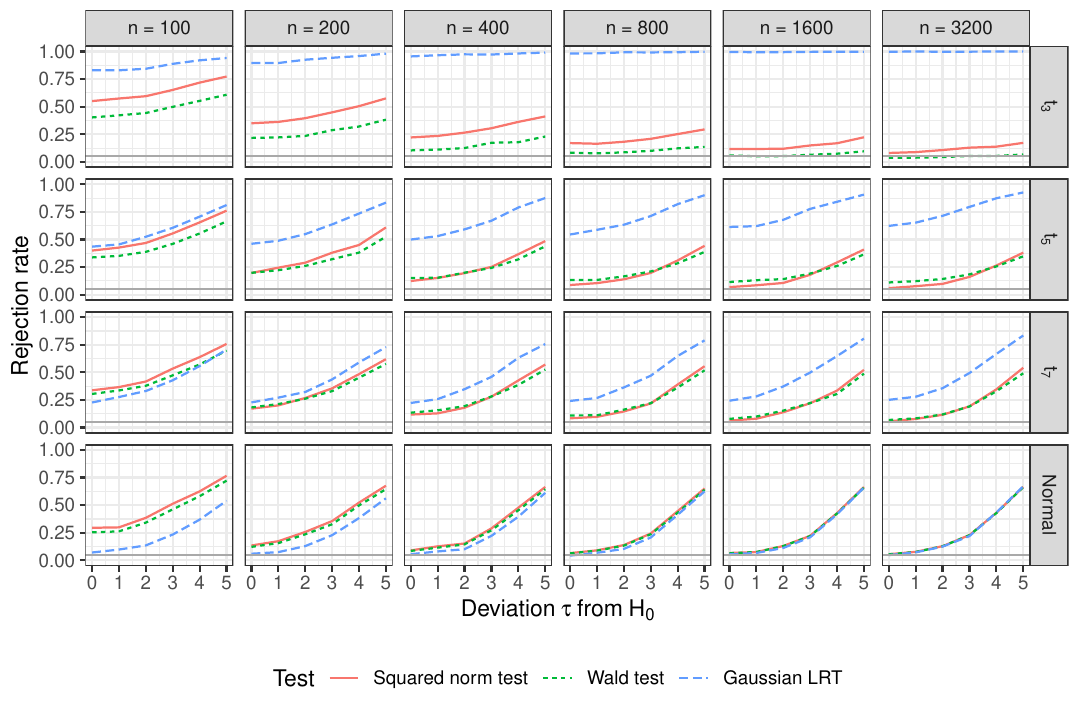}
    \caption{Average rejection rates (over 2000 replicates) of our proposed test and the Gaussian LRT for varying matrix $t$-distributions, sample sizes $n$ and local deviations $\tau$ from the null hypothesis when $(p_1, p_2) = (3, 3)$.}
    \label{fig:local_power_3x3}
\end{figure}

\begin{figure}[ht]
    \centering
    \includegraphics[width=0.85\linewidth]{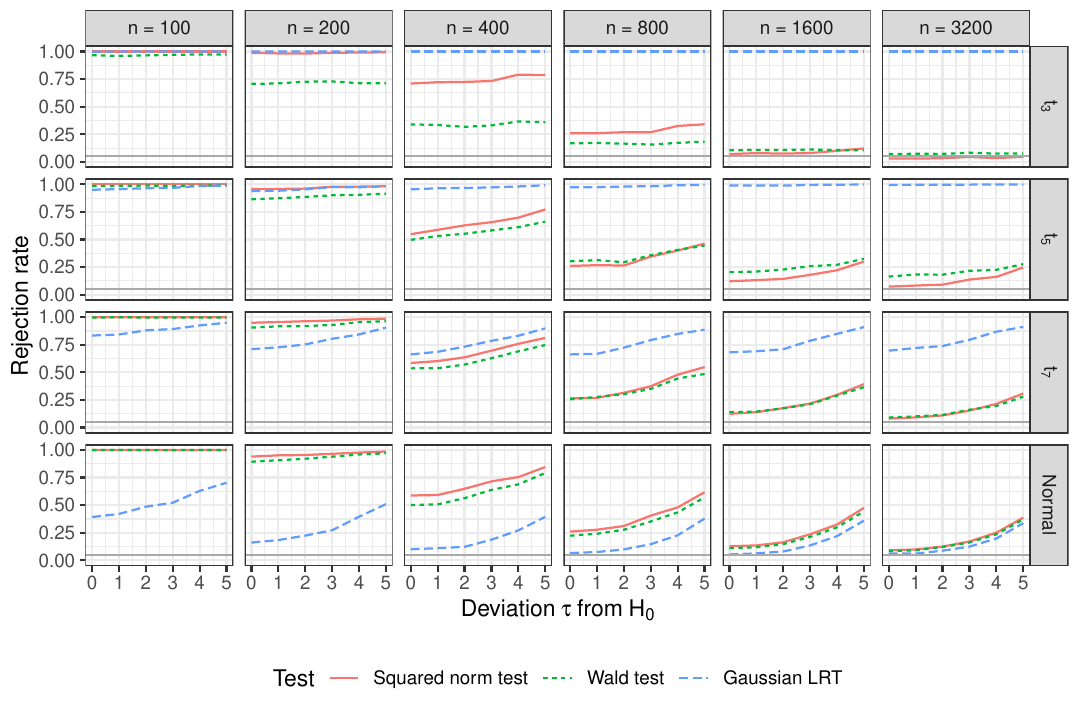}
    \caption{Average rejection rates (over 2000 replicates) of our proposed test and the Gaussian LRT for varying matrix $t$-distributions, sample sizes $n$ and local deviations $\tau$ from the null hypothesis when $(p_1, p_2) = (5, 5)$.}
    \label{fig:local_power_5x5}
\end{figure}

The simulation is repeated 2000 times and for each iteration we perform our proposed test (``Squared norm test'' in the figures), its Wald-type version and Gaussian LRT. The average proportions of rejections at the level $0.05$ are shown in Figures \ref{fig:local_power_3x3} and \ref{fig:local_power_5x5}. We make the following observations: (i) With increasing $n$, the squared norm test reaches the desired level at $\tau = 0$ in all cases except for $\nu = 3$ where the assumption of finite fourth moments does not hold. For the Wald test, slightly larger sample sizes are needed to reach the correct level. (ii) Greater sample sizes are required in the case $(p_1, p_2) = (5, 5)$ to reach the correct level. This is natural since an inevitable part of separability testing is the estimation of $\mathrm{Cov}\{ \mathrm{vec} (X_i) \}$ whose dimension quickly grows with $p_1, p_2$. (iii) The local power curves of our tests are increasing in $\tau$, as expected, and, after having reached the correct level, the squared norm test appears to have slightly better power than the Wald test. (iv) The Gaussian LRT retains the correct level only under normal data (bottom rows of panels), showing that it is not robust to violation of the Gaussianity assumption. (v) It appears that both our tests reach power equal to that of Gaussian LRT under normal data, as long as the sample size is large enough.


\subsection*{Acknowledgments}

JV was supported by the Research Council of Finland (grants 347501, 353769, 368494).
TM was supported by JSPS KAKENHI Grant Numbers 21H05205, 22K17865 and 24K02951 and JST Moonshot Grant Number JPMJMS2024.

\appendix

\section{Proofs of technical results}

Throughout the proofs, we make the assumption that the mean parameter equals zero, $M = 0$. Due to the centering employed by all our estimators, this assumption is without loss of generality.

\subsection{Proof for Section \ref{sec:estimation}}

\begin{proof}[Proof of Theorem \ref{theo:fisher_consistency_1}]
    As $\mathrm{vec}(X) = (\Sigma_2^{1/2} \otimes \Sigma_1^{1/2}) \mathrm{vec}(Z)$, it is sufficient to inspect $A := \mathrm{E}\{ \mathrm{vec}(Z) \mathrm{vec}(Z)^{\top} \}$, which satisfies $A = (O_2 \otimes O_1) A (O_2 \otimes O_1)^{\top}$ for all orthogonal matrices $O_1 \in \mathbb{R}^{p_1 \times p_1}$ and $O_2 \in \mathbb{R}^{p_2 \times p_2}$. By letting $O_1, O_2$ range over diagonal matrices with elements $\pm 1$ on the diagonal, we observe that $A$ is a diagonal matrix. Whereas, by ranging $O_1, O_2$ over permutation matrices, we further see that all the diagonal elements must be equal to each other. Hence $A = \mathrm{E}(Z_{11}^2) I_{p_1 p_2}$.
\end{proof}

\begin{proof}[Proof of Theorem \ref{theo:fisher_consistency_2}]
    We begin by showing that $\mathrm{E}(X X^{\top}) = \beta \mathrm{tr}(\Sigma_2) \Sigma_1$. To see this, we first note that $\mathrm{E}(X X^{\top}) = \Sigma_1^{1/2} \mathrm{E}(Z \Sigma_2  Z^{\top}) \Sigma_1^{1/2}$. Now, denoting by $\Lambda_2$ the diagonal matrix of the eigenvalues of $\Sigma_2$, the right-hand side orthogonal invariance of $Z$ shows that $\mathrm{E}(Z \Sigma_2  Z^{\top}) = \mathrm{E}(Z \Lambda_2 Z^{\top})$, whereas the left-hand side orthogonal invariance implies that $\mathrm{E}(Z \Lambda_2 Z^{\top})$ must be equal to $c I_{p_1}$ for some constant $c > 0$. By observing the $(1, 1)$ element and noting that each element of $Z$ has equal second moment, we get
    \begin{align*}
        c = \sum_{k = 1}^{p_2} \mathrm{E}(Z_{1k}^2) \lambda_{2k} = \mathrm{E}(Z_{11}^2) \mathrm{tr}(\Lambda_2),
    \end{align*}
    proving the desired formula $\mathrm{E}(X X^{\top}) = \beta \mathrm{tr}(\Sigma_2) \Sigma_1$. Using this it is now simple to check that claim (i) holds.

    For claim (ii), we have, using the same formula, that
    \begin{align*}
        B_1 = \frac{1}{p_2} \mathrm{E} (X B_2^{-1} X^{\top}) = \frac{1}{p_2} \beta \mathrm{tr}(\Sigma_2^{1/2} B_2^{-1} \Sigma_2^{1/2}) \Sigma_1,
    \end{align*}
    and the equivalent result for $B_2$, establishing also claim (ii).
\end{proof}

\begin{proof}[Proof of Theorem \ref{theo:consistency_separable}]
    Let $\mathcal{M}^p = \{ W \in \mathcal{S}_+^p \mid |W| = 1 \}$ denote the unit-determinant subset of $\mathcal{S}_+^p$. The Gaussian MLE $S_{1n}, S_{2n}$ is obtained as a minimizer of the map $(W_1, W_2) \mapsto L(W_1, W_2; X_i - \Bar{X})$ over $(W_1, W_2) \in \mathcal{M}^{p_1} \times \mathcal{S}_+^{p_2}$, where
    \begin{align*}
        L(W_1, W_2; X_i - \Bar{X}) \equiv L_n(W_1, W_2)
        := \sum_{i = 1}^n \mathrm{vec}(X_i - \Bar{X})^{\top} (W_2 \otimes W_1)^{-1} \mathrm{vec}(X_i - \Bar{X}) + \ln | W_2 \otimes W_1 |
    \end{align*}
    denotes the (shifted) negative log-likelihood function of the matrix normal distribution $\mathcal{N}_{p_1 \times p_2}(0, W_1, W_2)$ applied to the centered sample $X_1 - \Bar{X}, \ldots X_n - \Bar{X}$. By \cite[Theorem 4]{wiesel2012geodesic}, $L_n(W_1, W_2)$ is jointly $g$-convex in $W_1$ and $W_2$ and by \cite{duembgen2016geodesic}, the set $\mathcal{M}^{p_1} \times \mathcal{S}_+^{p_2}$ is $g$-convex. Theorem 1 in \cite{soloveychik2016gaussian} shows that, for large enough $n$, the continuity of the distribution of the $X_i$ guarantees that the function $(W_1, W_2) \mapsto L_n(W_1, W_2)$ almost surely admits a unique minimizer, i.e., the pair $S_{1n}, S_{2n}$. Moreover, it is easily seen via the law of large numbers that the function $L_n(W_1, W_2)$ converges point-wise in the almost sure sense, as $n \rightarrow \infty$, to a function $L(W_1, W_2)$ that is also jointly $g$-convex in $W_1$ and $W_2$ and whose unique minimizer is, by Theorem~\ref{theo:fisher_consistency_2}, $W_{01} := \Sigma_1/|\Sigma_1|^{1/p_1}$, $W_{02} := |\Sigma_1|^{1/p_1} \beta_F \Sigma_2$.

     Since $L_n(W_1, W_2)$ is jointly $g$-convex in $W_1$ and $W_2$, it is also convex when treated as a function of $(W_1, W_2)$ on the product manifold $\mathcal{M}^{p_1} \times \mathcal{S}_+^{p_2}$. The geodesics in the product manifold are of the form $\gamma := (\gamma_1, \gamma_2)$ where $\gamma_1, \gamma_2$ are geodesics in the component spaces, see Problem 5-7 in \cite{lee2018introduction}. In fact, by \cite{duembgen2016geodesic}, these geodesics even admit a closed form in the current case. The geodesic from $(M_1, S_1) \in \mathcal{M}^{p_1} \times \mathcal{S}_+^{p_2}$ to $(M_2, S_2) \in \mathcal{M}^{p_1} \times \mathcal{S}_+^{p_2}$ can be parameterized as
    \begin{align*}
        \gamma(t) = (M_1^{1/2} (M_1^{-1/2} M_2 M_1^{-1/2})^t M_1^{1/2}, S_1^{1/2} (S_1^{-1/2} S_2 S_1^{-1/2})^t S_1^{1/2}).   
    \end{align*}
    Hence, the desired result follows now by applying Theorem \ref{theo:manifold_minimization} to $L_n$ when treated as a function on the product manifold $\mathcal{M}^{p_1} \times \mathcal{S}_+^{p_2}$.

\end{proof}

Theorem \ref{theo:manifold_minimization} is a generalization of \cite[Theorem 5]{brunel2023geodesically} with a largely similar proof.

\begin{theorem}\label{theo:manifold_minimization}
    Let $(\mathcal{M}, g)$ be a complete Riemannian manifold. Let $f_n: \mathcal{M} \to \mathbb{R}$ be a sequence of $g$-convex random functions that converge point-wise in the almost sure sense to the non-random function $f: \mathcal{M} \to \mathbb{R}$, as $n \rightarrow \infty$. Assume further that $f_n$ admits the almost surely unique sequence of minimizers $Q_n \in \mathcal{M}$ and that $f$ has the unique minimizer $Q_0 \in \mathcal{M}$. Then, $d(Q_n, Q_0) \rightarrow_{a.s.} 0$, as $n \rightarrow \infty$. 
\end{theorem}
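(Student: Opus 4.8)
The plan is to prove a strong-consistency statement for the minimizers of a sequence of random $g$-convex functions on a complete Riemannian manifold that converge pointwise a.s.\ to a deterministic $g$-convex function with a unique minimizer. The natural template is the classical Euclidean argument for consistency of $M$-estimators built on convexity (Rockafellar-type ``convexity lemma''), transplanted to the manifold via the exponential map, so I should first reduce the problem to a statement on $\mathbb{R}^{\dim \mathcal{M}}$.

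First I would fix a reference point and a radius. Work in normal coordinates centered at $Q_0$, i.e., pull everything back through $\exp_{Q_0}$; completeness of $(\mathcal{M},g)$ guarantees $\exp_{Q_0}$ is defined on all of $T_{Q_0}\mathcal{M} \cong \mathbb{R}^d$ and surjective, so the pulled-back functions $\tilde f_n := f_n \circ \exp_{Q_0}$ and $\tilde f := f \circ \exp_{Q_0}$ are defined on all of $\mathbb{R}^d$. The key structural fact is that $g$-convexity of $f_n$ means $f_n$ is convex along geodesics, hence $\tilde f_n$ restricted to any line through the origin is convex (a line through the origin in $T_{Q_0}\mathcal{M}$ maps to a geodesic through $Q_0$). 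This ``radial convexity at the base point'' is exactly what is needed to run the epigraph/sublevel-set argument, and $d(Q_n, Q_0) = \| \exp_{Q_0}^{-1}(Q_n) \|$ as long as $Q_n$ stays inside a normal neighborhood, which it will once we control it. Actually I would like to avoid the cut-locus subtlety: since only radial geodesics from $Q_0$ are used and $\exp_{Q_0}$ is globally defined, it suffices to track the preimage $\tilde Q_n \in \mathbb{R}^d$ of a minimizer and show $\| \tilde Q_n \| \to 0$; a minimizer of $f_n$ corresponds to (at least one) minimizer of $\tilde f_n$ and, since radial convexity already forces uniqueness of the minimizer of $\tilde f_n$ in each radial direction, the a.s.\ uniqueness of $Q_n$ gives a well-defined $\tilde Q_n$ for large $n$.

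The core step is the following deterministic implication, applied $\omega$ by $\omega$ on the a.s.\ event where $\tilde f_n(x) \to \tilde f(x)$ for every $x$ in a fixed countable dense set and where each $\tilde f_n$ has a unique minimizer. Fix $\varepsilon>0$. Since $Q_0$ is the unique minimizer of $f$ and $\tilde f$ is continuous, $\delta := \min_{\|x\| = \varepsilon} \tilde f(x) - \tilde f(0) > 0$ (the sphere is compact). Pick finitely many points $x_1, \dots, x_m$ on the sphere $\|x\| = \varepsilon$ from the dense set such that on each $x_j$ we have $\tilde f(x_j) \geq \tilde f(0) + \delta$, and include the point $0$ (or a dense point near $0$) to control $\tilde f_n(0) \to \tilde f(0)$. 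For $n$ large, pointwise convergence gives $\tilde f_n(x_j) > \tilde f(0) + \delta/2$ and $\tilde f_n(0) < \tilde f(0) + \delta/4$ simultaneously for all $j$. Now invoke radial convexity: any point $y$ with $\|y\| > \varepsilon$ lies on a ray from $0$ that passes through some point $\bar y$ with $\|\bar y\| = \varepsilon$, and by a covering/compactness argument (the finitely many $x_j$ can be chosen so that the minimum of $\tilde f_n$ over the whole sphere is controlled — here I would either pass to a slightly larger finite net and use continuity of each $\tilde f_n$, or, cleaner, use that a convex function's sublevel sets are convex so it suffices to bound $\tilde f_n$ at a single well-chosen radial point in each direction) we get $\tilde f_n(\bar y) > \tilde f(0) + \delta/2 > \tilde f_n(0)$. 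Convexity of $\tilde f_n$ along the ray $0 \to \bar y \to y$ then forces $\tilde f_n(y) \geq \tilde f_n(\bar y) > \tilde f_n(0)$, so no minimizer of $\tilde f_n$ can satisfy $\|y\| > \varepsilon$; hence $\|\tilde Q_n\| \leq \varepsilon$ for all large $n$, i.e., $d(Q_n, Q_0) \leq \varepsilon$. Since $\varepsilon$ was arbitrary, $d(Q_n, Q_0) \to 0$ on the a.s.\ event, which is the claim.

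The main obstacle is the passage from pointwise convergence on a countable dense set to a uniform-enough statement on the sphere $\|x\| = \varepsilon$: a naive finite net controls $\tilde f_n$ only at the net points, and one needs the convexity of each $\tilde f_n$ to interpolate lower bounds between net points (the minimum of a convex function over a simplex is attained at a vertex, so a lower bound at the vertices of a triangulated sphere-ish region does propagate inward, but the sphere is not convex, so this requires a small argument with radial cones). This is precisely the point where the cited proof of \cite[Theorem 5]{brunel2023geodesically} does the work, and ``largely similar'' means adapting their handling of this interpolation to a general complete manifold via $\exp_{Q_0}$; the only genuinely new ingredient beyond their argument is the remark that completeness makes $\exp_{Q_0}$ globally defined so the radial geodesics needed in the argument always exist, and that $g$-convexity of $f_n$ transfers to radial convexity of $\tilde f_n$. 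Everything else — compactness of spheres, continuity of $\tilde f$, the countable-dense-set trick to get a single a.s.\ event — is routine.
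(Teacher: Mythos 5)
Your overall architecture is the same as the paper's: fix $\varepsilon>0$, obtain a uniform strict lower bound for $f_n$ on the geodesic sphere of radius $\varepsilon$ around $Q_0$ relative to $f_n(Q_0)$, and then use convexity along the geodesic from $Q_0$ through that sphere to any putative distant minimizer to derive a contradiction. Your one-variable convexity computation ($g(t_1)>g(0)$ for a convex $g$ on a ray forces $g(t)>g(t_1)$ for $t>t_1$) is correct and is equivalent to the paper's final contradiction step, and your handling of the cut locus (only tracking preimages under $\exp_{Q_0}$ and radial geodesics, which exist globally by completeness) is sound.

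There is, however, a genuine gap in the step you yourself identify as the crux: upgrading pointwise a.s.\ convergence to a uniform lower bound on the sphere $\{\|x\|=\varepsilon\}$. Your reduction to normal coordinates retains only \emph{radial} convexity of $\tilde f_n$ (convexity along lines through the origin), because lines not through the origin are generally not geodesics in normal coordinates. Radial convexity alone does not allow you to interpolate between distinct directions, so pointwise convergence on a countable dense subset of the sphere cannot be propagated to a uniform bound on the whole sphere: the values of $\tilde f_n$ on different rays are essentially uncoupled. Moreover, both mechanisms you sketch for the interpolation are incorrect as stated: the \emph{maximum}, not the minimum, of a convex function over a simplex is attained at a vertex (so control at net points does not yield the lower bound you need in between), and the sublevel sets of $\tilde f_n$ are not convex since $\tilde f_n$ is not jointly convex. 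The paper avoids this by staying intrinsic: it uses the full $g$-convexity of $f_n$ (convexity along geodesics joining arbitrary nearby points of the compact sphere $K_\varepsilon$) to invoke the manifold analogue of the classical convexity lemma (Lemmas 4 and 7 of \cite{brunel2023geodesically}), which delivers a.s.\ uniform convergence of $f_n$ to $f$ on $K_\varepsilon$. To repair your argument you would need to retain that full geodesic convexity in the uniform-convergence step rather than discarding it in the pullback.
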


\begin{proof}[Proof of Theorem \ref{theo:manifold_minimization}]
    Fix $\varepsilon > 0$ and denote $K_\varepsilon := \{Q \in \mathcal{M} \mid d(Q, Q_0) = \varepsilon \}$. Then, since the metric $d$ is continuous, the bounded set $K_\varepsilon$ is closed, and hence also compact, $(\mathcal{M}, g)$ being a complete Riemannian manifold.     Consequently, arguing as in Lemmas 4 and 7 in \cite{brunel2023geodesically}, $f_n$ converges uniformly to $f$ on $K_\varepsilon$, in the almost sure sense. Moreover, $f$ attains its minimum value in $K_\varepsilon$ and, thus, $\eta := - \sup_{Q \in K_\varepsilon} \{ f(Q_0) - f(Q) \} > 0$.

    We next argue for an individual probability event $\omega \in \Omega$ for which
    \begin{align*}
        \sup_{Q \in K_\varepsilon} |f_n(Q) - f(Q)| \rightarrow 0 \quad \mbox{and} \quad | f_n(Q_0) - f(Q_0) | \rightarrow 0,
    \end{align*}
    as $n \rightarrow \infty$. We first show that for $n$ large enough, we have $f_n(Q) > f_n(Q_0)$ for all $Q \in K_\varepsilon$. To see this, we write, for arbitrary $Q \in K_\varepsilon$,
    \begin{align*}
        f_n(Q_0) - f_n(Q) = f_n(Q_0) - f(Q_0) + f(Q_0) - f(Q) + f(Q) - f_n(Q).
    \end{align*}
    Hence,
    \begin{align*}
        \sup_{Q \in K_\varepsilon} \{ f_n(Q_0) - f_n(Q) \} \leq |f_n(Q_0) - f(Q_0)| + \sup_{Q \in K_\varepsilon} \{ f(Q_0) - f(Q) \} + \sup_{Q \in K_\varepsilon} | f(Q) - f_n(Q) | \leq \frac{1}{3} \eta - \eta + \frac{1}{3} \eta, 
    \end{align*}
    for all $n$ large enough. Hence, $\inf_{Q \in K_\varepsilon} f_n(Q) > f_n(Q_0)$ for all $n$ large enough. In addition, we have $f_n(Q_n) \leq f_n(Q_0)$, by definition.

    Assume next that $d(Q_n, Q_0) > \varepsilon$ for infinitely many $n$. Then, for each such $n$, by the convexity of the manifold, we may choose $R_n \in K_\varepsilon$ such that $R_n = \lambda Q_0 + (1 - \lambda) Q_n$ (where the summation is an abuse of notation and means a geodesically convex combination of $Q_0$ and $Q_n$). Then, by the previous and the convexity of $f_n$, we get that $f_n(R_n) \leq \lambda f_n(Q_0) + (1 - \lambda) f_n(Q_n) < \inf_{Q \in K_\varepsilon} f_n(Q)$, which is a contradiction since $R_n \in K_\varepsilon$. Thus, for the probability element $\omega$ in question, we must have that $d(Q_n, Q_0) \leq \varepsilon$ for all large enough $n$. Since this holds for all $\omega$ in a set of measure one, the desired almost sure convergence follows.


    
\end{proof}

\begin{proof}[Proof of Lemma \ref{lem:det_expansion}]
    Using Slutsky's theorem we get
    \begin{align}\label{eq:det_expansion}
    \begin{split}
        \sqrt{n} \left( \frac{A_n}{|A_n|^{1/p}} - \frac{A}{|A|^{1/p}} \right)
        =& \sqrt{n} (A_n - \gamma A) \frac{1}{|A_n|^{1/p}} - \frac{1}{|A_n|^{1/p} |A|^{1/p}} A \sqrt{n} ( |A_n|^{1/p} - \gamma |A|^{1/p}) \\
        =& \sqrt{n} (A_n - \gamma A) \frac{1}{\gamma |A|^{1/p}} - \frac{1}{\gamma |A|^{2/p}} A \sqrt{n} ( |A_n|^{1/p} - \gamma |A|^{1/p}) + o_p(1).
    \end{split}    
    \end{align}
    We have the Taylor expansion $| I_p + \varepsilon G |^{1/p} = 1 + (1/p) \mathrm{tr}(G) \varepsilon + o(\varepsilon)$, implying the delta-method type result that
    \begin{align*}
        | A_n |^{1/p} =& |\gamma A + (A_n - \gamma A)|^{1/p} \\
        =& \gamma |A|^{1/p} | I_p + \gamma^{-1} A^{-1/2} (A_n - \gamma A) A^{-1/2}|^{1/p} \\
        =& \gamma |A|^{1/p} [ 1 + (1/p) \gamma^{-1} \mathrm{tr}\{ A^{-1} (A_n - \gamma A) \} ] + o_p(1/\sqrt{n}).
    \end{align*}
    Hence,
    \begin{align*}
        \sqrt{n} ( |A_n|^{1/p} - \gamma |A|^{1/p}) =& \frac{1}{p} |A|^{1/p} \mathrm{tr}\{ A^{-1/2} \sqrt{n} (A_n - \gamma A) A^{-1/2} \} + o_p(1) \\
        =& \frac{1}{p} |A|^{1/p} \mathrm{vec}(A^{-1})' \cdot \sqrt{n} \mathrm{vec} (A_n - \gamma A) + o_p(1),
    \end{align*}
    and denoting $f_n := \sqrt{n} \mathrm{vec} (A_n - \gamma A)$, we get from \eqref{eq:det_expansion} that
    \begin{align*}
        \sqrt{n} \mathrm{vec} \left( \frac{A_n}{|A_n|^{1/p}} - \frac{A}{|A|^{1/p}} \right)
        = \frac{1}{\gamma |A|^{1/p}} \left\{ I_p - \frac{1}{p} \mathrm{vec}(A) \mathrm{vec}(A^{-1})^{\top} \right\} f_n + o_p(1). 
    \end{align*}
    Observing then that $\mathrm{vec}(A) = (A^{1/2} \otimes A^{1/2}) \mathrm{vec}(I_p)$ and similarly for $\mathrm{vec}(A^{-1})$ gives the claim.
\end{proof}

\begin{proof}[Proof of Theorem \ref{theo:bahadur_1}]
    Denoting $p := p_1 p_2$, the standard CLT shows that $\sqrt{n} (S_n - \beta_F \Sigma) = \mathcal{O}_p(1)$, letting us use Lemma~\ref{lem:det_expansion} in conjunction with the facts that $S_n = \Sigma^{1/2} (1/n) \sum_{i = 1}^n \mathrm{vec}(Z_i) \mathrm{vec}(Z_i)^{\top} \Sigma^{1/2}$ and that $(\Sigma^{1/2} \otimes \Sigma^{1/2}) \mathrm{vec}(\Sigma^{-1}) = \mathrm{vec}(I_p)$ to obtain the desired result. 
\end{proof}

\begin{proof}[Proof of Theorem \ref{theo:bahadur_2}]
    In this proof, without loss of generality, we assume that the scaling of the solution $(S_{1n}, S_{2n})$ is fixed by taking $\mathrm{tr}(\Sigma^{-1/2}_1 S_{1n} \Sigma^{-1/2}_1) = 1$. By Theorem \ref{theo:consistency_separable} and the continuous mapping theorem, this rescaled solution is a consistent estimator of the corresponding population-level solution. We denote the resulting scaling constants for the solution $(B_1, B_2)$ to \eqref{eq:population_structured} by $b_1, b_2 > 0$, i.e., $B_1 = b_1 \Sigma_1$ and $B_2 = b_2 \Sigma_2$. Writing $\Tilde{X}_i := X_i - \Bar{X}$ and expanding the first equation in \eqref{eq:sample_structured} then gives
    \begin{align*}
        \sqrt{n} (S_{1n} - b_1 \Sigma_1) = \frac{1}{p_2 n} \sum_{i = 1}^n \Tilde{X}_i \sqrt{n} (S_{2n}^{-1} - b_2^{-1} \Sigma_2^{-1}) \Tilde{X}_i^{\top} + \sqrt{n} \left( \frac{1}{b_2 p_2 n} \sum_{i = 1}^n \Tilde{X}_i \Sigma_2^{-1} \Tilde{X}_i^{\top} - b_1 \Sigma_1 \right).
    \end{align*}
    The relation $B_n^{-1} - B^{-1} = B_n^{-1} (B - B_n) B^{-1}$ then yields
    \begin{align*}
        \sqrt{n} \mathrm{vec} (S_{1n} - b_1 \Sigma_1) = - \frac{1}{b_2 p_2 n} \sum_{i = 1}^n (\Tilde{X}_i \otimes \Tilde{X}_i) (\Sigma_2^{-1} \otimes S_{2n}^{-1}) \sqrt{n} \mathrm{vec} (S_{2n} - b_2 \Sigma_2) + \sqrt{n} \mathrm{vec} \left( \frac{1}{b_2 p_2 n} \sum_{i = 1}^n \Tilde{X}_i \Sigma_2^{-1} \Tilde{X}_i^{\top} - b_1 \Sigma_1 \right).
    \end{align*}
    Arguing as in Theorem \ref{theo:fisher_consistency_1}, we have $\mathrm{E} ((X-M) \otimes (X-M)) = \beta_F \mathrm{vec}(\Sigma_1) \mathrm{vec}(\Sigma_2)'$ for $X \sim P(M,\Sigma_1,\Sigma_2,F)$.
    Thus,
    \begin{align*}
        \frac{1}{n} \sum_{i = 1}^n (\Tilde{X}_i \otimes \Tilde{X}_i) (\Sigma_2^{-1} \otimes S_{2n}^{-1}) &= \beta_F \mathrm{vec}(\Sigma_1) \mathrm{vec}(\Sigma_2)' (\Sigma_2^{-1} \otimes S_{2n}^{-1}) + o_p(1) \\
        &= \beta_F \mathrm{vec}(\Sigma_1) \mathrm{vec}(S_{2n}^{-1})' + o_p(1) \\
        &= \frac{\beta_F}{b_2} \mathrm{vec}(\Sigma_1) \mathrm{vec}(\Sigma_2^{-1})' + o_p(1),
    \end{align*}
    giving us
    \begin{align*}
        \sqrt{n} \mathrm{vec} (S_{1n} - b_1 \Sigma_1)
        =& \left\{ -\frac{\beta_F}{b_2^2 p_2} \mathrm{vec}(\Sigma_1) \mathrm{vec}(\Sigma_2^{-1})' + o_p(1) \right\} \sqrt{n} \mathrm{vec} (S_{2n} - b_2 \Sigma_2) \\
        &+ (\Sigma_1^{1/2} \otimes \Sigma_1^{1/2}) \sqrt{n} \mathrm{vec} \left( \frac{1}{b_2 p_2 n} \sum_{i = 1}^n \Tilde{Z}_i \Tilde{Z}_i^{\top} - b_1 I_{p_1} \right) + o_p(1),
    \end{align*}
    where we have used both the law of large numbers and Theorem \ref{theo:consistency_separable}. Denoting the above expansion as $h_{1n} = \{ H_1 + o_p(1) \} h_{2n} + k_{1n} + o_p(1)$, by the symmetry of \eqref{eq:sample_structured} also $h_{2n} = \{ H_2 + o_p(1) \} h_{1n} + k_{2n} + o_p(1)$. Hence,
    \begin{align*}
        [ I_{p_1^2} -  \{ H_1 H_2 + o_p(1) \} ] h_{1n} = \{ H_1 + o_p(1) \} k_{2n} + k_{1n} + o_p(1),
    \end{align*}
    where $H_1 H_2 = p_1^{-1} \mathrm{vec}(\Sigma_1) \mathrm{vec}(\Sigma_1^{-1})^{\top}$. Since $\mathrm{vec}(\Sigma_1^{-1})^{\top} \mathrm{vec}(S_{1n}) = \mathrm{tr}(\Sigma^{-1/2}_1 S_{1n} \Sigma^{-1/2}_1)$, our choice of scaling shows that $H_1 H_2 h_{1n} = 0$ and, hence, Slutsky's theorem and the CLT give
    \begin{align}\label{eq:h1n_expansion}
        h_{1n} = H_1 k_{2n} + k_{1n} + o_p(1),
    \end{align}
    where the right-hand side has a limiting normal distribution. Thus, the conditions of Lemma \ref{lem:det_expansion} are satisfied for $\sqrt{n} (S_{1n} - b_1 \Sigma_1)$, and we next multiply \eqref{eq:h1n_expansion} from the left with
    $b_1^{-1} |\Sigma_1|^{-1/p} (\Sigma_1^{1/2} \otimes \Sigma_1^{1/2}) Q_{p_1} (\Sigma_1^{-1/2} \otimes \Sigma_1^{-1/2})$ to obtain

    \begin{align*}
        |\Sigma_1|^{1/p} \sqrt{n} \mathrm{vec} \left( \frac{S_{1n}}{|S_{1n}|^{1/p_1}} - \frac{\Sigma_1}{|\Sigma_1|^{1/p_1}} \right)
        =& - \frac{1}{p_2 \beta_F} (\Sigma_1^{1/2} \otimes \Sigma_1^{1/2}) Q_{p_1} \mathrm{vec}(I_{p_1}) \mathrm{vec}(I_{p_2})^{\top} \sqrt{n} \mathrm{vec} \left( \frac{1}{p_1 n} \sum_{i = 1}^n \Tilde{Z}_i^{\top} \Tilde{Z}_i - \beta_F I_{p_2} \right) \\
        &+ \frac{1}{\beta_F} (\Sigma_1^{1/2} \otimes \Sigma_1^{1/2}) Q_{p_1} \sqrt{n} \mathrm{vec} \left( \frac{1}{p_2 n} \sum_{i = 1}^n \Tilde{Z}_i \Tilde{Z}_i^{\top} - \beta_F I_{p_1} \right) + o_p(1),
    \end{align*}
    where we have used in particular $b_1 b_2 = \beta_F$ to simplify. Since $Q_{p_1} \mathrm{vec}(I_{p_1}) = 0$, the first term on the RHS above vanishes, and we then use the result
    \begin{align*}
        \sqrt{n} \mathrm{vec} \left( \frac{1}{p_2 n} \sum_{i = 1}^n \Tilde{Z}_i \Tilde{Z}_i^{\top} - \beta_F I_{p_1} \right) = \sqrt{n} \mathrm{vec} \left( \frac{1}{p_2 n} \sum_{i = 1}^n Z_i Z_i^{\top} - \beta_F I_{p_1} \right) + o_p(1),
    \end{align*}
    see page 28 in \cite{van2000asymptotic}, to obtain
    \begin{align*}
        \sqrt{n} \mathrm{vec} \left( \frac{S_{1n}}{|S_{1n}|^{1/p_1}} - \frac{\Sigma_1}{|\Sigma_1|^{1/p_1}} \right) = \frac{(\Sigma_1^{1/2} \otimes \Sigma_1^{1/2})}{\beta_F |\Sigma_1|^{1/p_1}}  Q_{p_1} \sqrt{n} \mathrm{vec} \left( \frac{1}{p_2 n} \sum_{i = 1}^n Z_i Z_i^{\top} - \beta_F I_{p_1} \right) + o_p(1).
    \end{align*}
    The desired claim for $S_{1n}$ now follows after using Lemmas \ref{lem:vec_aaprime} and \ref{lem:vec_aaprime_2} below. The equivalent result for $S_{2n}$ follows by symmetry and by noting that $\mathrm{vec}(A^{\top}) = K_{p_1, p_2} \mathrm{vec}(A)$ for a $p_1 \times p_2$ matrix $A$.
\end{proof}

We next present two lemmas concerning the properties of Kronecker products and commutation matrices.

\begin{lemma}\label{lem:vec_aaprime}
    For any $p_1 \times p_2$ matrix $A$, we have
    \begin{align*}
        \mathrm{vec}(AA^{\top}) = \{ \mathrm{vec}(I_{p_2})^{\top} \otimes I_{p_1^2} \} (I_{p_2} \otimes K_{p_2, p_1} \otimes I_{p_1}) \{ \mathrm{vec}(A) \otimes \mathrm{vec}(A) \}.
    \end{align*}
\end{lemma}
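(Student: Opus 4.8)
The plan is to obtain the identity as a composition of three elementary vectorization rules, so that no genuine computation is needed beyond index bookkeeping. First I would rewrite $AA^{\top} = A\,I_{p_2}\,A^{\top}$ and apply the rule $\mathrm{vec}(AXB^{\top}) = (B\otimes A)\mathrm{vec}(X)$ with $X = I_{p_2}$ and both outer matrices equal to $A$ (so that $B^{\top}=A^{\top}$), which gives $\mathrm{vec}(AA^{\top}) = (A\otimes A)\,\mathrm{vec}(I_{p_2})$.

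Next I would turn this matrix--vector product into a vectorization of $A\otimes A$. Applying the same rule once more, now with inner matrix $X = A\otimes A$ (of size $p_1^2\times p_2^2$), left factor $I_{p_1^2}$ and $B^{\top} = \mathrm{vec}(I_{p_2})$, yields $(A\otimes A)\,\mathrm{vec}(I_{p_2}) = \{\mathrm{vec}(I_{p_2})^{\top}\otimes I_{p_1^2}\}\,\mathrm{vec}(A\otimes A)$, which already supplies the outermost factor of the claimed right-hand side.

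The only step requiring real work is the identity $\mathrm{vec}(A\otimes A) = (I_{p_2}\otimes K_{p_2,p_1}\otimes I_{p_1})\{\mathrm{vec}(A)\otimes\mathrm{vec}(A)\}$, the special case $E = F = A$ of the general rule $\mathrm{vec}(E\otimes F) = (I_{n}\otimes K_{q,m}\otimes I_{p})\{\mathrm{vec}(E)\otimes\mathrm{vec}(F)\}$ for $E\in\mathbb{R}^{m\times n}$ and $F\in\mathbb{R}^{p\times q}$. I would either cite this from the standard matrix-calculus references or, to keep the paper self-contained, verify it by linearity on rank-one matrices: writing $A = \sum_{i,k} a_{ik}\,e_i e_k^{\top}$ and using $\mathrm{vec}(e_i e_k^{\top}) = e_k\otimes e_i$ together with $(e_i e_k^{\top})\otimes(e_j e_l^{\top}) = (e_i\otimes e_j)(e_k\otimes e_l)^{\top}$, one checks that, on the $(i,k,j,l)$ term, both sides equal $e_k\otimes e_l\otimes e_i\otimes e_j$, the required reordering of the middle pair $e_i\otimes e_l\mapsto e_l\otimes e_i$ being exactly the action of $K_{p_2,p_1}$ as defined in the paper. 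Chaining the three displays then gives $\mathrm{vec}(AA^{\top}) = \{\mathrm{vec}(I_{p_2})^{\top}\otimes I_{p_1^2}\}(I_{p_2}\otimes K_{p_2,p_1}\otimes I_{p_1})\{\mathrm{vec}(A)\otimes\mathrm{vec}(A)\}$, as claimed. The main obstacle is purely notational, namely keeping track of which of the four Kronecker slots of sizes $p_2,p_1,p_2,p_1$ the commutation matrix acts on, and being consistent with the convention $K_{m,n}(u\otimes v)=v\otimes u$ for vectors $u\in\mathbb{R}^{n}$, $v\in\mathbb{R}^{m}$.
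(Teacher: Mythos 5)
Your proof is correct, but it takes a genuinely different route from the paper's. The paper argues directly at the level of columns: writing $AA^{\top} = \sum_{k} a_k a_k^{\top}$ with $a_k = (e_k^{\top}\otimes I_{p_1})\mathrm{vec}(A)$, it obtains $\mathrm{vec}(AA^{\top}) = \bigl(\sum_k e_k^{\top}\otimes I_{p_1}\otimes e_k^{\top}\otimes I_{p_1}\bigr)\{\mathrm{vec}(A)\otimes\mathrm{vec}(A)\}$ and then identifies this coefficient matrix with the claimed one via $(I_{p_1}\otimes e_k^{\top}) = (e_k^{\top}\otimes I_{p_1})K_{p_2,p_1}$. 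You instead factor the identity through $\mathrm{vec}(AA^{\top}) = (A\otimes A)\mathrm{vec}(I_{p_2}) = \{\mathrm{vec}(I_{p_2})^{\top}\otimes I_{p_1^2}\}\mathrm{vec}(A\otimes A)$ and then invoke the general formula expressing $\mathrm{vec}(E\otimes F)$ in terms of $\mathrm{vec}(E)\otimes\mathrm{vec}(F)$. Both arguments are valid and all three of your steps check out (your rank-one verification is consistent with the paper's convention $K_{m,n}(u\otimes v)=v\otimes u$ for $u\in\mathbb{R}^{n}$, $v\in\mathbb{R}^{m}$). What your decomposition buys is modularity: it explains where each of the three factors on the right-hand side comes from, and the one nontrivial ingredient is exactly the Kronecker--vectorization identity of Magnus (1985), which the paper already cites in the proof of Theorem \ref{theo:bahadur_4}, so it could be reused here at no extra cost. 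The paper's column-wise computation is slightly shorter and fully self-contained, avoiding any appeal to an external lemma.
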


\begin{proof}[Proof of Lemma \ref{lem:vec_aaprime}]
    Denoting the columns of $A$ by $a_k$, we have $a_k = (e_k^{\top} \otimes I_{p_1}) \mathrm{vec}(A)$ where $e_k$ denotes the $k$th standard basis vector (of $\mathbb{R}^{p_2}$). Hence,
    \begin{align*}
        \mathrm{vec}(AA^{\top}) = \sum_{k = 1}^{p_2} a_k \otimes a_k = \left( \sum_{k = 1}^{p_2} e_k^{\top} \otimes I_{p_1} \otimes e_k^{\top} \otimes I_{p_1} \right) \{ \mathrm{vec}(A) \otimes \mathrm{vec}(A) \}.
    \end{align*}
    Since $(I_{p_1} \otimes e_k^{\top}) = (e_k^{\top} \otimes I_{p_1}) K_{p_2, p_1}$, the above simplifies to the desired form.
\end{proof}

\begin{lemma}\label{lem:vec_aaprime_2}
    We have
    \begin{align*}
        \{ \mathrm{vec}(I_{p_2})^{\top} \otimes I_{p_1^2} \} (I_{p_2} \otimes K_{p_2, p_1} \otimes I_{p_1}) \mathrm{vec}(I_{p_1 p_2}) = p_2 \mathrm{vec}(I_{p_1}). 
    \end{align*}
\end{lemma}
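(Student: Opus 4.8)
The plan is to reduce the identity to Lemma~\ref{lem:vec_aaprime} by linearity. First I would write $\mathrm{vec}(I_{p_1 p_2})$ as a sum of simple tensors of the form $\mathrm{vec}(A)\otimes\mathrm{vec}(A)$. Letting $e_i$, $i=1,\dots,p_1$, and $e_k$, $k=1,\dots,p_2$, denote the standard basis vectors of $\mathbb{R}^{p_1}$ and $\mathbb{R}^{p_2}$, the rank-one matrices $E_{ik}:=e_i e_k^{\top}\in\mathbb{R}^{p_1\times p_2}$ form an orthonormal basis of $\mathbb{R}^{p_1\times p_2}$, so $\sum_{i,k}\mathrm{vec}(E_{ik})\mathrm{vec}(E_{ik})^{\top}=I_{p_1 p_2}$, and therefore $\sum_{i=1}^{p_1}\sum_{k=1}^{p_2}\mathrm{vec}(E_{ik})\otimes\mathrm{vec}(E_{ik})=\mathrm{vec}(I_{p_1 p_2})$.

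Since the map $M\mapsto\{\mathrm{vec}(I_{p_2})^{\top}\otimes I_{p_1^2}\}(I_{p_2}\otimes K_{p_2,p_1}\otimes I_{p_1})M$ is linear, I would apply it term by term to this decomposition. Lemma~\ref{lem:vec_aaprime}, applied with $A=E_{ik}$ (which has the required $p_1\times p_2$ shape), shows that the $(i,k)$-th term equals $\mathrm{vec}(E_{ik}E_{ik}^{\top})$. A direct computation gives $E_{ik}E_{ik}^{\top}=e_i e_k^{\top}e_k e_i^{\top}=e_i e_i^{\top}$, so each term equals $\mathrm{vec}(e_i e_i^{\top})=e_i\otimes e_i$, which does not depend on $k$. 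Summing over all pairs $(i,k)$ then yields $\sum_{i=1}^{p_1}\sum_{k=1}^{p_2}(e_i\otimes e_i)=p_2\sum_{i=1}^{p_1}(e_i\otimes e_i)=p_2\,\mathrm{vec}(I_{p_1})$, which is the claim.

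There is essentially no real obstacle here; the argument is routine linear algebra once one spots the decomposition of $\mathrm{vec}(I_{p_1 p_2})$ into the orthonormal basis $\{\mathrm{vec}(E_{ik})\}$. The only points requiring a little care are the dimension bookkeeping (checking that $E_{ik}$ is genuinely $p_1\times p_2$ so Lemma~\ref{lem:vec_aaprime} applies verbatim) and, should one prefer a self-contained computation avoiding Lemma~\ref{lem:vec_aaprime}, the correct use of the basis-vector identities $K_{p_2,p_1}(a\otimes b)=b\otimes a$ for $a\in\mathbb{R}^{p_1}$, $b\in\mathbb{R}^{p_2}$ and $\mathrm{vec}(I_{p_2})^{\top}(e_k\otimes e_k)=1$, when expanding $\mathrm{vec}(I_{p_1 p_2})=\sum_{i,k}(e_k\otimes e_i)\otimes(e_k\otimes e_i)$ and pushing the factors through $(I_{p_2}\otimes K_{p_2,p_1}\otimes I_{p_1})$ and then $\{\mathrm{vec}(I_{p_2})^{\top}\otimes I_{p_1^2}\}$.
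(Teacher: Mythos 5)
Your proof is correct. It does, however, take a mildly different route from the paper: the paper expands the commutation matrix $K_{p_2,p_1}$ in the standard basis and computes $(I_{p_2}\otimes K_{p_2,p_1}\otimes I_{p_1})\mathrm{vec}(I_{p_1p_2})=\sum_{i,j}\mathrm{vec}\{(e_j\otimes e_j)(e_i\otimes e_i)^{\top}\}$ directly, then applies the left factor $\mathrm{vec}(I_{p_2})^{\top}\otimes I_{p_1^2}$; you instead decompose $\mathrm{vec}(I_{p_1p_2})=\sum_{i,k}\mathrm{vec}(E_{ik})\otimes\mathrm{vec}(E_{ik})$ and invoke Lemma~\ref{lem:vec_aaprime} term by term, reducing everything to $\sum_{i,k}\mathrm{vec}(E_{ik}E_{ik}^{\top})=p_2\,\mathrm{vec}(I_{p_1})$. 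Your version buys economy -- it reuses the operator identity already established rather than re-deriving the action of the commutation matrix -- at the cost of being slightly less self-contained; the paper's version is a from-scratch computation that does not depend on the preceding lemma. Both are valid, and all the supporting facts you use (that $\{\mathrm{vec}(E_{ik})\}$ is the standard basis of $\mathbb{R}^{p_1p_2}$, that vectorizing $\sum_{i,k}\mathrm{vec}(E_{ik})\mathrm{vec}(E_{ik})^{\top}=I_{p_1p_2}$ gives the tensor decomposition, and that $E_{ik}E_{ik}^{\top}=e_ie_i^{\top}$) check out.
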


\begin{proof}[Proof of Lemma \ref{lem:vec_aaprime_2}]
    By writing the commutation matrix out, we get
    \begin{align*}
        (I_{p_2} \otimes K_{p_2, p_1} \otimes I_{p_1}) \mathrm{vec}(I_{p_1 p_2}) = \sum_{i = 1}^{p_2} \sum_{j = 1}^{p_1} \mathrm{vec} \{ (e_j e_i^{\top} \otimes I_{p_1}) (I_{p_2} \otimes e_j e_i^{\top}) \} = \sum_{i = 1}^{p_2} \sum_{j = 1}^{p_1} \mathrm{vec} \{ (e_j \otimes e_j) (e_i \otimes e_i)^{\top} \},
    \end{align*}
    from which the result follows after noting that $\mathrm{vec}(ab^{\top}) = b \otimes a$ and that $\sum_{j = 1}^{p_1} (e_j \otimes e_j) = \mathrm{vec} ( I_{p_1} )$.
\end{proof}

\begin{proof}[Proof of Theorem \ref{theo:bahadur_4}]
    Using Lemma 3 in \cite{magnus1985matrix} along with Slutsky's theorem, we obtain
    \begin{align*}
        & \sqrt{n} \mathrm{vec} \left( \frac{S_{2n}}{| S_{2n} |^{1/p_2}} \otimes \frac{S_{1n}}{| S_{1n} |^{1/p_1}} - \frac{\Sigma}{|\Sigma|^{1/(p_1 p_2)}} \right) \\
        =& (I_{p_2} \otimes K_{p_1, p_2} \otimes I_{p_1}) \sqrt{n} \left\{ \mathrm{vec} \left( \frac{S_{2n}}{| S_{2n} |^{1/p_2}} \right) \otimes \mathrm{vec} \left( \frac{S_{1n}}{| S_{1n} |^{1/p_1}} \right) \right. - \left. \mathrm{vec} \left( \frac{\Sigma_2}{|\Sigma_2|^{1/p_2}} \right) \otimes \mathrm{vec} \left( \frac{\Sigma_1}{|\Sigma_1|^{1/p_1}} \right) \right\}, \\
        =& (I_{p_2} \otimes K_{p_1, p_2} \otimes I_{p_1}) \left\{ \sqrt{n} \mathrm{vec} \left( \frac{S_{2n}}{|S_{2n}|^{1/p_2}} - \frac{\Sigma_2}{|\Sigma_2|^{1/p_2}} \right) \otimes \mathrm{vec} \left( \frac{\Sigma_1}{|\Sigma_1|^{1/p_1}} \right) \right\} \\
        +& (I_{p_2} \otimes K_{p_1, p_2} \otimes I_{p_1}) \left\{ \mathrm{vec} \left( \frac{\Sigma_2}{|\Sigma_2|^{1/p_2}} \right) \otimes \sqrt{n} \mathrm{vec} \left( \frac{S_{1n}}{|S_{1n}|^{1/p_1}} -  \frac{\Sigma_1}{|\Sigma_1|^{1/p_1}} \right) \right\} + o_p(1).
    \end{align*}
    Using Theorem \ref{theo:bahadur_2}, this implies that
    \begin{align*}
        \sqrt{n} \mathrm{vec} \left( \frac{S_{2n}}{| S_{2n} |^{1/p_2}} \otimes \frac{S_{1n}}{| S_{1n} |^{1/p_1}} - \frac{\Sigma}{|\Sigma|^{1/(p_1 p_2)}} \right) =& \frac{1}{\beta_F} (I_{p_2} \otimes K_{p_1, p_2} \otimes I_{p_1}) \left( \frac{\Sigma_2^{1/2} \otimes \Sigma_2^{1/2} \otimes \Sigma_1^{1/2} \otimes \Sigma_1^{1/2}}{|\Sigma_2|^{1/p_2}  |\Sigma_1|^{1/p_1}} \right) \\
        & \{ R_2 f_n \otimes \mathrm{vec}(I_{p_1}) + \mathrm{vec}(I_{p_2}) \otimes R_1 f_n \} + o_p(1),
    \end{align*}
    from which the result follows by noting that
    \begin{align*}
        R_2 f_n \otimes \mathrm{vec}(I_{p_1}) + \mathrm{vec}(I_{p_2}) \otimes R_1 f_n = \{ R_2 \otimes \mathrm{vec}(I_{p_1}) \} (f_n \otimes 1) + \{  \mathrm{vec}(I_{p_2}) \otimes R_1 \} (1 \otimes f_n).
    \end{align*}
    and that
    \begin{align*}
        (I_{p_2} \otimes K_{p_1, p_2} \otimes I_{p_1}) ( \Sigma_2^{1/2} \otimes \Sigma_2^{1/2} \otimes \Sigma_1^{1/2} \otimes \Sigma_1^{1/2} ) =&  ( \Sigma_2^{1/2} \otimes \Sigma_1^{1/2} \otimes \Sigma_2^{1/2} \otimes \Sigma_1^{1/2} )  (I_{p_2} \otimes K_{p_1, p_2} \otimes I_{p_1}) \\
         =& (\Sigma^{1/2} \otimes \Sigma^{1/2}) (I_{p_2} \otimes K_{p_1, p_2} \otimes I_{p_1}).
    \end{align*}
\end{proof}

\subsection{Proof for Section \ref{sec:fourth_moment_Z}}

\begin{proof}[Proof of Lemma \ref{lem:moment_connections}]
    Inspecting the top left $2 \times 2$ corner block of $Z$, we have, from the sphericity, that
    \begin{align}\label{eq:O_equivariance}
        \frac{1}{\sqrt{2}} \begin{pmatrix}
            1 & -1 \\
            1 & 1
        \end{pmatrix}
        \begin{pmatrix}
            z_{11} & z_{12} \\
            z_{21} & z_{22}
        \end{pmatrix} \sim
        \begin{pmatrix}
            z_{11} & z_{12} \\
            z_{21} & z_{22}
        \end{pmatrix}.
    \end{align}
    Computing now the fourth moments of the $(1, 1)$-elements of the above random matrices gives the first desired relation. The third relation is obtained by computing instead the expectations of the product of all elements of these matrices. For the second relation, the same strategy should be applied to the equivalent of \eqref{eq:O_equivariance} where the orthogonal transformation is applied from the right-hand side instead.
\end{proof}

\begin{proof}[Proof of Theorem \ref{theo:fourth_moment}]
    Below we denote $\Omega_0 := \mathrm{E} [ \{ \mathrm{vec}(Z) \otimes \mathrm{vec}(Z) \} \{ \mathrm{vec}(Z) \otimes \mathrm{vec}(Z) \}^{\top} ]$. Letting then $T_{ij}^{\top} := (e_j \otimes e_i \otimes I_{p_1 p_2})$, the matrices $T_{ij}^{\top} \Omega_0 T_{k\ell}$ correspond to all possible $p_1 p_2 \times p_1 p_2$ blocks of the matrix $\Omega_0$. We next verify the claim by inspecting each of these blocks in turn.

    Straightforward calculations using the formulas in the proof of Theorem \ref{theo:main_limiting} show that
    \begin{align}\label{eq:omega_blocks_1}
    \begin{split}
        T_{ij}^{\top} (I_{p_1^2 p_2^2} + J_1 J_2 + K^\otimes_1 K^\otimes_2) T_{k\ell} = \delta_{ik} \delta_{j\ell} I_{p_1^2 p_2^2} + (e_j \otimes e_i) (e_\ell \otimes e_k)^{\top} + (e_\ell \otimes e_k) (e_j \otimes e_i)^{\top}.
            \end{split}
    \end{align}
    Moreover, we also have
    \begin{align}\label{eq:omega_blocks_2}
    \begin{split}
        & T_{ij}^{\top} (J_1 K^\otimes_2 + J_2 K^\otimes_1 + J_1 + J_2 + K^\otimes_1 + K^\otimes_2) T_{k\ell} \\
        =& (e_\ell \otimes e_i)(e_j \otimes e_k)^{\top} + (e_j \otimes e_k)(e_\ell \otimes e_i)^{\top} \\
        +& \delta_{j \ell} (I_{p_2} \otimes e_i e_k^{\top} ) + \delta_{ik} (e_j e_\ell^{\top} \otimes I_{p_1}) + \delta_{j \ell} (I_{p_2} \otimes e_k e_i^{\top} ) + \delta_{ik} (e_\ell e_j^{\top} \otimes I_{p_1}).
        \end{split}
    \end{align}
    For the diagonal blocks, $i = k$, $j = \ell$, we thus have
    \begin{align}\label{eq:diagonal_blocks}
    \begin{split}
        T_{ij}^{\top} \Omega_0 T_{ij} = (m_2 - m_4) \{ (e_j \otimes e_i)(e_j \otimes e_i)^{\top} + (I_{p_2} \otimes e_i e_i^{\top}) + (e_j e_j^{\top} \otimes I_{p_1})\} + m_4 \{ I_{p_1^2 p_2^2} + 2 (e_j \otimes e_i) (e_j \otimes e_i)^{\top} \}.
        \end{split}
    \end{align}
    We compare this next to the form $T_{ij}^{\top} \Omega_0 T_{ij} = \mathrm{E} \{ Z_{ij}^2 \mathrm{vec}(Z) \mathrm{vec}(Z)^{\top} \}$. By the orthogonal invariance of $Z$, the latter matrix  
    must be diagonal. Indexing the elements of the matrix such that $(s, t) \otimes (s, t)$ refers to the diagonal element $(e_t \otimes e_s)^{\top} \mathrm{E} \{ Z_{ij}^2 \mathrm{vec}(Z) \mathrm{vec}(Z)^{\top} \} (e_t \otimes e_s)$, the diagonal elements are as follows: (i) the $(i, j) \otimes (i, j)$th diagonal element equals $m_1 = 3 m_2$, (ii) for $(s, t) \otimes (s, t)$ such that $s = i$ or $j = t$ (but not both), the corresdponding diagonal element is $m_2$, (iii) the remaining diagonal elements are $m_4$. Checking the diagonal elements of \eqref{eq:diagonal_blocks} shows that they match these and thus what remains is to verify the off-diagonal blocks.

    A general off-diagonal block satisfies $T_{ij}^{\top} \Omega_0 T_{ij} = \mathrm{E} \{ Z_{ij} Z_{k \ell} \mathrm{vec}(Z) \mathrm{vec}(Z)^{\top} \}$ and we first consider the case where $i = k$ and $j \neq \ell$. Using again the ``Kronecker-indexing'', this matrix has two kinds of non-zero elements. Its $(i, j) \otimes (i, \ell)$ and $(i, \ell) \otimes (i, j)$-elements are equal to $m_2$, whereas its $(r, j) \otimes (r, \ell)$ and $(r, \ell) \otimes (r, j)$-elements are equal to $0.5(m_2 - m_4)$, for all $r \neq i$. Again comparing to the corresponding elements of the claimed form of $\Omega_0$ using formulas \eqref{eq:omega_blocks_1} and \eqref{eq:omega_blocks_2}, we see that the claimed form has exactly this structure. The case where $i \neq k$ and $j = \ell$ behaves analogously and what remains is to check the case $i \neq k$, $j \neq \ell$. Here the block $\mathrm{E} \{ Z_{ij} Z_{k \ell} \mathrm{vec}(Z) \mathrm{vec}(Z)^{\top} \}$ has non-zero elements at (i) the locations $(i, j) \otimes (k, \ell)$ and $(k, \ell) \otimes (i, j)$ where the element is $m_4$, and at (ii) the locations $(i, \ell) \otimes (k, j)$ and $(k, j) \otimes (i, \ell)$ where the element is $0.5(m_2 - m_4)$. Comparison to the claimed form for $\Omega_0$ shows that it indeed has this structure, completing the proof.
\end{proof}




For computing the fourth moments of the spherical $Z$ in \eqref{eq:Z_spherical_family}, the following lemma first derives various moments of the Haar uniform matrix $U$.

\begin{lemma}\label{lem:U_moments}
    For $U \sim \mathcal{U}_{p_1 \times p_2}$, we have
    \begin{align*}
        \mathrm{E}(u_{11}^4) &= \frac{3}{p_1 (p_1 + 2)} \\
        \mathrm{E}(u_{11}^2 u_{12}^2) &= \frac{1}{p_1 (p_1 + 2)} \\
        \mathrm{E}(u_{11}^2 u_{21}^2) &= \frac{1}{p_1 (p_1 + 2)}
        \\
        \mathrm{E}(u_{11}^2 u_{22}^2) &= \frac{1}{p_1 (p_1 + 2)} \left( \frac{p_1 + 1}{p_1 - 1} \right)
    \end{align*}
\end{lemma}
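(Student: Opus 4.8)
The plan is to use the definition of $\mathcal{U}_{p_1 \times p_2}$: the matrix $U$ consists of the first $p_2$ columns of a Haar uniform orthogonal matrix $O \in \mathbb{R}^{p_1 \times p_1}$, so that $u_{ij} = O_{ij}$ for all $j \le p_2$. Since Haar measure on the orthogonal group is invariant under $O \mapsto O^{\top}$ and under left and right multiplication by permutation matrices and by diagonal sign-change matrices, the four quantities in the statement are exactly the four distinct fourth-order moments of squared entries of $O$: one entry raised to the fourth power, two squared entries sharing a column, two squared entries sharing a row, and two squared entries in ``general position'' (distinct rows and distinct columns). The case analysis in Theorem~\ref{theo:fourth_moment} already relies on precisely this bookkeeping, so the first step is just to record these reductions; the degenerate cases $p_1 = 1$ or $p_2 = 1$, in which some of the moments are vacuous, are trivial.

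For the first three moments I would pass to a single column of $O$ (and, for $\mathrm{E}(u_{11}^2 u_{12}^2)$, to a single row, using invariance under transposition), which is uniformly distributed on the unit sphere $S^{p_1-1}$. Representing such a vector as $v = g / \| g \|$ with $g \sim \mathcal{N}(0, I_{p_1})$ and using that $g/\|g\|$ and $\|g\|$ are independent gives $\mathrm{E}(v_k^{2a}) = \mathrm{E}(g_k^{2a}) / \mathrm{E}(\| g \|^{2a})$ and $\mathrm{E}(v_1^2 v_2^2) = \mathrm{E}(g_1^2 g_2^2)/\mathrm{E}(\| g \|^4)$. With $\mathrm{E}(g_1^2) = 1$, $\mathrm{E}(g_1^4) = 3$, $\mathrm{E}(g_1^2 g_2^2) = 1$, $\mathrm{E}(\| g \|^2) = p_1$ and $\mathrm{E}(\| g \|^4) = p_1(p_1 + 2)$ (the second moment of a $\chi^2_{p_1}$ variable), this yields at once $\mathrm{E}(u_{11}^2) = 1/p_1$, $\mathrm{E}(u_{11}^4) = 3 / \{ p_1(p_1+2) \}$ and $\mathrm{E}(u_{11}^2 u_{21}^2) = \mathrm{E}(u_{11}^2 u_{12}^2) = 1 / \{ p_1(p_1+2) \}$.

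The one moment that genuinely couples two columns is $\mathrm{E}(u_{11}^2 u_{22}^2)$, and this is the step where the only real work lies: it must be bootstrapped from the already-computed moments using the orthonormality of the columns of $O$. Concretely, since columns $1$ and $2$ of $O$ are unit vectors, $1 = \mathrm{E}[ \| O_{\cdot 1} \|^2 \| O_{\cdot 2} \|^2 ] = \sum_{i,k=1}^{p_1} \mathrm{E}( O_{i1}^2 O_{k2}^2 )$; splitting off the $p_1$ diagonal terms $i = k$, each equal to the same-row moment $1/\{ p_1(p_1+2) \}$, and invoking row exchangeability for the $p_1(p_1-1)$ off-diagonal terms, this becomes $1 = (p_1+2)^{-1} + p_1(p_1-1)\, \mathrm{E}(u_{11}^2 u_{22}^2)$, whence $\mathrm{E}(u_{11}^2 u_{22}^2) = (p_1+1)/\{ p_1(p_1-1)(p_1+2) \} = \{ p_1(p_1+2) \}^{-1} (p_1+1)/(p_1-1)$, as claimed. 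The main obstacle is therefore not any single hard computation but rather carrying out the reductions and the orthonormality identity in the correct order, since the last moment depends on having first pinned down the same-row moment.
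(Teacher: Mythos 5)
Your proof is correct and follows essentially the same route as the paper: the first three moments are reduced to moments of a vector uniform on the sphere $S^{p_1-1}$ (the paper invokes Theorem 3.3 of Fang et al.\ where you use the Gaussian representation $g/\|g\|$ with $\|g\|$ independent of the direction), and the remaining cross moment $\mathrm{E}(u_{11}^2 u_{22}^2)$ is bootstrapped from a deterministic orthonormality identity together with row exchangeability. The only difference is cosmetic: the paper expands $\mathrm{E}[\{\mathrm{tr}(UU^{\top})\}^2] = p_2^2$ over all four indices, whereas you use the slightly leaner two-column identity $\mathrm{E}(\|O_{\cdot 1}\|^2\|O_{\cdot 2}\|^2) = 1$; both reduce to the same linear equation for the unknown moment and give the stated value.
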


\begin{proof}[Proof of Lemma \ref{lem:U_moments}]
    The first two equalities follow straight from \cite[Theorem 3.3]{fang1990symmetric} after observing that, prior to truncating the column dimension of $U$ to $p_2$, the marginal distributions of its rows have a uniform distribution on the unit sphere in $\mathbb{R}^{p_1}$. The third equality is proven similarly. For the fourth one, we first observe that
    \begin{align}\label{eq:U_haar_trace}
        p_2^2 = \mathrm{E} [ \{ \mathrm{tr}(UU^{\top}) \}^2 ] = \sum_{i = 1}^{p_1} \sum_{j = 1}^{p_2} \sum_{k = 1}^{p_1} \sum_{\ell = 1}^{p_2} \mathrm{E}(u_{ij}^2 u_{k\ell}^2).
    \end{align}
    Now, the restriction of this sum to the case $i = k$ gives
    \begin{align*}
        p_1 \sum_{j = 1}^{p_2} \sum_{\ell = 1}^{p_2} \mathrm{E}(u_{1j}^2 u_{1\ell}^2) =& p_1 p_2 \mathrm{E}(u_{11}^4) + p_1 p_2 (p_2 - 1) \mathrm{E}(u_{11}^2 u_{12}^2) = \frac{p_2 (p_2 + 2)}{(p_1 + 2)}
    \end{align*}
    where we have used the permutation invariance of $U$ and the first two parts of this lemma. Whereas for $i \neq k$ the sum \eqref{eq:U_haar_trace} reduces to
    \begin{align*}
        p_1 (p_1 - 1) \sum_{j = 1}^{p_2} \sum_{\ell = 1}^{p_2} \mathrm{E}(u_{1j}^2 u_{2\ell}^2) =& p_1 (p_1 - 1) p_2 \mathrm{E}(u_{11}^2 u_{21}^2) + p_1 (p_1 - 1) p_2 (p_2 - 1) \mathrm{E}(u_{11}^2 u_{22}^2) \\
        =& \frac{(p_1 - 1) p_2}{(p_1 + 2)} + p_1 (p_1 - 1) p_2 (p_2 - 1) \mathrm{E}(u_{11}^2 u_{22}^2)
    \end{align*}
    Combining the two cases with \eqref{eq:U_haar_trace} now gives the final claim of the lemma.
\end{proof}

\begin{proof}[Proof of Theorem \ref{theo:Z_moments}]
    Beginning with the first claim, we have
    \begin{align}\label{eq:Z_big_sum_1}
        \mathrm{E}(z_{11}^2 z_{22}^2) = \sum_{i,j,k,\ell = 1}^{p_2} \mathrm{E}(u_{1i} u_{1j} u_{2k} u_{2\ell}) \mathrm{E}(\lambda_i \lambda_j \lambda_k \lambda_\ell) \mathrm{E}(v_{1i} v_{1j} v_{2k} v_{2\ell}).
    \end{align}
    We break the sum \eqref{eq:Z_big_sum_1} into two parts, corresponding to the choices $i = j$ and $i \neq j$. In the former case, we must also have $k = \ell$, as otherwise the sign invariance of the columns of $U$ puts the corresponding expected value to zero. Hence, in the case $i = j$ we get
    \begin{align*}
        \sum_{i,k = 1}^{p_2} \mathrm{E}(u_{1i}^2 u_{2k}^2) \mathrm{E}(\lambda_i^2 \lambda_k^2 ) \mathrm{E}(v_{1i}^2 v_{2k}^2) =& p_2 \mathrm{E}(u_{11}^2 u_{21}^2) \mathrm{E}(\lambda_1^4) \mathrm{E}(v_{11}^2 v_{21}^2) + p_2 (p_2 - 1) \mathrm{E}(u_{11}^2 u_{22}^2) \mathrm{E}(\lambda_1^2 \lambda_2^2) \mathrm{E}(v_{11}^2 v_{22}^2)\\
        =&  \frac{p_2}{p_1 (p_1 + 2) p_2 (p_2 + 2)} \mathrm{E}(\lambda_1^4)\\
        +& \frac{p_2 (p_2 - 1)}{p_1 (p_1 + 2) p_2 (p_2 + 2)} \left(\frac{p_1 + 1}{p_1 - 1}\right) \left(\frac{p_2 + 1}{p_2 - 1}\right) \mathrm{E}(\lambda_1^2 \lambda_2^2),
    \end{align*}
    where the final step uses Lemma \ref{lem:U_moments}.

    For the case $i \neq j$ in \eqref{eq:Z_big_sum_1}, arguing as above shows that we get
    \begin{align*}
        2 p_2 (p_2 - 1) \mathrm{E}(u_{11} u_{12} u_{21} u_{22}) \mathrm{E}(v_{11} v_{12} v_{21} v_{22}) \mathrm{E}(\lambda_1^2 \lambda_2^2),
    \end{align*}
    where computation similar to the one used in Lemma \ref{lem:moment_connections} shows that
    \begin{align*}
        \mathrm{E}(u_{11} u_{12} u_{21} u_{22}) = \frac{1}{2}\left\{ \mathrm{E}(u_{11}^2 u_{12}^2) - \mathrm{E}(u_{11}^2 u_{22}^2) \right\} = -\frac{1}{p_1 (p_1 + 2)(p_1 - 1)}.
    \end{align*}
    Combining the previous hence shows the desired claim that
    \begin{align*}
        \mathrm{E}(z_{11}^2 z_{22}^2) &= \frac{p_2}{p_1 (p_1 + 2) p_2 (p_2 + 2)} \mathrm{E}(\lambda_1^4) + \frac{p_2 (p_2 - 1)}{p_1 (p_1 + 2) p_2 (p_2 + 2)} \left(\frac{p_1 + 1}{p_1 - 1}\right) \left(\frac{p_2 + 1}{p_2 - 1}\right) \mathrm{E}(\lambda_1^2 \lambda_2^2) \\
        &+ 2 p_2 (p_2 - 1) \frac{1}{p_1 (p_1 + 2)(p_1 - 1)} \frac{1}{p_2 (p_2 + 2)(p_2 - 1)} \mathrm{E}(\lambda_1^2 \lambda_2^2) \\
        &= C^{-1} p_2 \mathrm{E}(\lambda_1^4) + C^{-1} p_2 (p_2 - 1) \left\{ \left(\frac{p_1 + 1}{p_1 - 1}\right) \left(\frac{p_2 + 1}{p_2 - 1}\right) + 2 \frac{1}{(p_1 - 1)(p_2 - 1)} \right\} \mathrm{E}(\lambda_1^2 \lambda_2^2) \\
        &= C^{-1} p_2 \mathrm{E}(\lambda_1^4) + C^{-1} \frac{p_2}{p_1 - 1} \left\{ (p_1 + 1)(p_2 + 1) +2 \right\} \mathrm{E}(\lambda_1^2 \lambda_2^2).
    \end{align*}

    For the claim about $\mathrm{E}(z_{11}^2 z_{12}^2) - \mathrm{E}(z_{11}^2 z_{22}^2)$, we use a similar approach to compute $\mathrm{E}(z_{11}^2 z_{12}^2)$, considering the cases $i = j$ and $i \neq j$ separately. The former yields the contribution
    \begin{align*}
        p_2 \mathrm{E}(u_{11}^4) \mathrm{E}(v_{11}^2 v_{21}^2) \mathrm{E}(\lambda_1^4) + p_2 (p_2 - 1) \mathrm{E}(u_{11}^2 u_{12}^2) \mathrm{E}(v_{11}^2 v_{22}^2) \mathrm{E}(\lambda_1^2 \lambda_2^2) = 3 p_2 C^{-1} \mathrm{E}(\lambda_1^4) + p_2 (p_2 + 1) C^{-1} \mathrm{E}(\lambda_1^2 \lambda_2^2),
    \end{align*}
    and the latter gives
    \begin{align*}
        2 p_2 (p_2 - 1) \mathrm{E}(u_{11}^2 u_{12}^2) \mathrm{E}(v_{11} v_{12} v_{21} v_{22}) \mathrm{E}(\lambda_1^2 \lambda_2^2) = - 2 p_2 C^{-1} \mathrm{E}(\lambda_1^2 \lambda_2^2).
    \end{align*}
    Combining these gives
    \begin{align*}
        \mathrm{E}(z_{11}^2 z_{12}^2) &= 3 p_2 C^{-1} \mathrm{E}(\lambda_1^4) + p_2 (p_2 - 1) C^{-1} \mathrm{E}(\lambda_1^2 \lambda_2^2),
    \end{align*}
    from which the desired result now follows.
\end{proof}

\subsection{Proof for Section \ref{sec:h0_test}}

\begin{proof}[Proof of Theorem \ref{theo:bahadur_5}]
    To obtain the asymptotic behavior of $A_{1n}^{-1/2}$, we linearize the expression $\sqrt{n} ( A_{1n}^{-1/2} A_{1n} A_{1n}^{-1/2} - I_p ) = 0$ and use Slutsky's theorem to obtain
    \begin{align}\label{eq:bahadur_5_invsqrt}
    \begin{split}
        \sqrt{n} ( A_{1n}^{-1/2} - A^{-1/2}) A^{1/2} + A^{1/2} \sqrt{n} ( A_{1n}^{-1/2} - A^{-1/2} ) = - A^{-1/2} \sqrt{n} ( A_{1n} - A ) A^{-1/2} + o_p(1).
    \end{split}
    \end{align}
    Similarly, linearizing the LHS of the claim of the theorem, we obtain
    \begin{align*}
        \sqrt{n} (A_{1n}^{-1/2} A_{2n} A_{1n}^{-1/2} - I_p) =  \sqrt{n} (A_{1n}^{-1/2} - A^{-1/2}) A^{1/2} + A^{-1/2} \sqrt{n} ( A_{2n} - A) A^{-1/2} + A^{1/2} \sqrt{n} (A_{1n}^{-1/2} - A^{-1/2}) + o_p(1).
    \end{align*}
    Using \eqref{eq:bahadur_5_invsqrt} and vectorizing then yields
    \begin{align*}
        \sqrt{n} \mathrm{vec}(A_{1n}^{-1/2} A_{2n} A_{1n}^{-1/2} - I_p) = (A^{-1/2} \otimes A^{-1/2}) \{ \sqrt{n} \mathrm{vec} ( A_{2n} - A ) - \sqrt{n} \mathrm{vec} ( A_{1n} - A ) \} + o_p(1).
    \end{align*}
\end{proof}

In the proof of Theorem \ref{theo:main_limiting_2} we use the notation $L_1 := J_1/p_1, L_2 := J_2/p_2$ where $J_1, J_2$ are defined in Section \ref{sec:fourth_moment_Z}. The resulting matrices $L_1$ and $L_2$ are then orthogonal projection matrices and allow for simpler notational manipulation compared to $J_1$ and $J_2$.

\begin{proof}[Proof of Theorem \ref{theo:main_limiting_2}]
    By Corollary \ref{cor:bahadur_6} and Theorem \ref{theo:fourth_moment} it is sufficient to inspect the matrix $\Omega^{1/2} B \Omega^{1/2}$ where
    \begin{align*}
    B :=& [ (I_{p_2} \otimes K_{p_1, p_2} \otimes I_{p_1}) \{ R_2 \otimes \mathrm{vec}(I_{p_1}) + \mathrm{vec}(I_{p_2}) \otimes R_1 \} - Q_{p_1 p_2} ]^{\top} \\
    &[ (I_{p_2} \otimes K_{p_1, p_2} \otimes I_{p_1}) \{ R_2 \otimes \mathrm{vec}(I_{p_1}) + \mathrm{vec}(I_{p_2}) \otimes R_1 \} - Q_{p_1 p_2} ]
\end{align*}
and
\begin{align*}
    \Omega =& \frac{1}{2} \{ m_2(Z) - m_4(Z) \} (p_1 L_1 K^\otimes_2 + p_2 L_2 K^\otimes_1 + p_1 L_1 + p_2 L_2 + K^\otimes_1 + K^\otimes_2) \\
    &+ m_4(Z) (I_{p_1 p_2} + p_1 p_2 L_1 L_2 + K^\otimes_1 K^\otimes_2) - \beta_F^2 p_1 p_2 L_1 L_2,
\end{align*}
and where we have used the fact that $p_1 p_2 L_1 L_2 = \mathrm{vec}(I_{p_1 p_2}) \mathrm{vec}(I_{p_1 p_2})'$. The desired claim holds once we show that $\Omega^{1/2} B \Omega^{1/2}$ has exactly two non-zero eigenvalues with (a) multiplicities equal to the degrees of freedom of the postulated chi-squared distributions, and (b) magnitudes equal to the corresponding scalar factors.

Denote in the proof $K := K^\otimes_1 K^\otimes_2$, $L = L_1 L_2$. Then all of $L_1, L_2, L$ are orthogonal projection matrices and $K^\otimes_1, K^\otimes_2, K$ are involutions. The following proof relies heavily on the following connections between the different matrices: $K L = L K = L$, $L L_1 = L_1 L = L$, $L L_2 = L_2 L = L$,  $L K^\otimes_1 = K^\otimes_1 L = L$, $L K^\otimes_2 = K^\otimes_2 L = L$, $K^\otimes_1 K^\otimes_2 = K^\otimes_2 K^\otimes_1$, $K^\otimes_1 L_2 = L_2 K^\otimes_1$, $L_1 K^\otimes_2 = K^\otimes_2 L_1$, $L_1 L_2 = L_2 L_1 = P_{p_1 p_2}$, $K^\otimes_1 L_1 = L_1 K^\otimes_1 = L_1$, $K^\otimes_2 L_2 = L_2 K^\otimes_2 = L_2$.

Let next $H$ be an orthogonal projection matrix. Using the fact that for square matrices $A, B$, the eigenvalues of $A B$ and $B A$ are equal, we see that all the following matrices have equal spectra: $\Omega^{1/2} (H B H) \Omega^{1/2}$, $H B H \Omega$, $B H \Omega H$, $(H \Omega H)^{1/2} B (H \Omega H)^{1/2}$. Let now $H = (I_{p_1^2 p_2^2} - L_1) (I_{p_1^2 p_2^2} - L_2) (I_{p_1^2 p_2^2} - L_1 L_2) = I_{p_1^2 p_2^2} - L_1 - L_2 + L_1 L_2$, which is easily verified to be an orthogonal projection. This choice of $H$ satisfies $H B H = B$ (which we will show later), showing that it is sufficient to replace $\Omega$ with
\begin{align*}
    H \Omega H = \frac{1}{2} ( m_2 - m_4 ) H( K^\otimes_1 + K^\otimes_2 ) H + m_4 H (I_{p_1 p_2} + K^\otimes_1 K^\otimes_2) H,
\end{align*}
where we have used $H L_1 = H L_2 = H L = 0$. Moreover, we can absorb $H$ back to $B$ using the same reasoning above, meaning that, without loss of generality, we can take
\begin{align*}
    \Omega = \frac{1}{2} (m_2 - m_4) ( K^\otimes_1 + K^\otimes_2 ) + m_4 (I_{p_1 p_2} + K^\otimes_1 K^\otimes_2).
\end{align*}
We partition this matrix as
\begin{align*}
    \Omega = ( m_4 + m_2 ) \left[ \frac{1}{4} (I_{p_1^2 p_2^2} + K^\otimes_1 + K^\otimes_2 + K^\otimes_1 K^\otimes_2) \right] + ( 3 m_4 - m_2 ) \left[ \frac{1}{4} (I_{p_1^2 p_2^2} - K^\otimes_1 - K^\otimes_2 + K^\otimes_1 K^\otimes_2) \right],
\end{align*}
where the two matrices in square brackets, call them $G_1, G_2$, are orthogonal projection matrices that are mutually orthogonal, that is, $G_1 G_2 = 0$.

Next, we show that $G_1 B G_1 G_1 B G_1 = G_1 B G_1$, $G_2 B G_2 G_2 B G_2 = G_2 B G_2$, $G_1 B G_2 = G_2 B G_1 = 0$, which, together with the identity $G_1 B G_1 G_2 B G_2 = 0$, yield that
\begin{align*}
    \Omega^{1/2} B \Omega^{1/2} = ( m_4 + m_2 ) G_1 B G_1 + ( 3 m_4 - m_2 ) G_2 B G_2,
\end{align*}
where $G_1 B G_1$ and $G_2 B G_2$ are orthogonal projection matrices that are mutually orthogonal. Furthermore, the traces of these matrices are checked to be
\begin{align*}
    \mathrm{tr}(G_1 B G_1) =& \frac{1}{4}(p_1 + 2)(p_1 - 1)(p_2 + 2)(p_2 - 1) \\
    \mathrm{tr}(G_2 B G_2) =& \frac{1}{4} p_1 p_2 (p_1 - 1) (p_2 - 1),
\end{align*}
from which the claim then follows.


What remains is thus to establish the identities concerning $H, B, G_1, G_2$.  Recall that $R_1, R_2$ are defined as
\begin{align*}
\begin{split}
    R_1 &:= \frac{1}{p_2} Q_{p_1} \{ \mathrm{vec}(I_{p_2})^{\top} \otimes I_{p_1^2} \} (I_{p_2} \otimes K_{p_2, p_1} \otimes I_{p_1}), \\
    R_2 &:= \frac{1}{p_1} Q_{p_2} \{ \mathrm{vec}(I_{p_1})^{\top} \otimes I_{p_2^2} \} (I_{p_1} \otimes K_{p_1, p_2} \otimes I_{p_2}) (K_{p_1, p_2} \otimes K_{p_1, p_2}).
\end{split}
\end{align*}
It is straightforwardly shown that the matrix $U_1 \in \mathbb{R}^{p_1^2 \times p_1^2 p_2^2}$ defined as
\begin{align*}
    U_1^{\top} := \frac{1}{\sqrt{p_2}} \{ \mathrm{vec}(I_{p_2})^{\top} \otimes I_{p_1^2} \} (I_{p_2} \otimes K_{p_2, p_1} \otimes I_{p_1}) = \frac{1}{\sqrt{p_2}} \sum_{i = 1}^{p_2} \sum_{j = 1}^{p_1} e_i^{\top} \otimes e_j^{\top} \otimes e_j e_i^{\top} \otimes I_{p_1}
\end{align*}
has orthonormal columns, and the same holds for $U_2 \in \mathbb{R}^{p_2^2 \times p_1^2 p_2^2}$ defined as
\begin{align*}
    U_2^{\top} := \frac{1}{\sqrt{p_1}} \{ \mathrm{vec}(I_{p_1})^{\top} \otimes I_{p_2^2} \} (I_{p_1} \otimes K_{p_1, p_2} \otimes I_{p_2}) (K_{p_1, p_2} \otimes K_{p_1, p_2}).
\end{align*}
Consequently, $p_2 R_1^{\top} R_1  + p_1 R_2^{\top} R_2 = U_1 Q_{p_1} U_1^{\top} + U_2 Q_{p_2} U_2^{\top}$. Direct computation shows that
\begin{align*}
    (I_{p_2} \otimes K_{p_2, p_1} \otimes I_{p_1}) U_1 = \frac{1}{\sqrt{p_2}} \{ \mathrm{vec}(I_{p_2}) \otimes I_{p_1^2} \}
\end{align*}
and that
\begin{align*}
    (I_{p_2} \otimes K_{p_2, p_1} \otimes I_{p_1}) U_2 = \frac{1}{\sqrt{p_1}} \{ I_{p_2^2} \otimes \mathrm{vec}(I_{p_1}) \}.
\end{align*}
These together allow us to show that
\begin{align*}
    Q_{p_1 p_2}(I_{p_2} \otimes K_{p_1, p_2} \otimes I_{p_1}) \{ R_2 \otimes \mathrm{vec}(I_{p_1}) + \mathrm{vec}(I_{p_2}) \otimes R_1 \} =& Q_{p_1 p_2} ( U_1 Q_{p_1} U_1^{\top} + U_2 Q_{p_2} U_2^{\top}) \\
    =& U_1 Q_{p_1} U_1^{\top} + U_2 Q_{p_2} U_2^{\top},
\end{align*}
where the final step follows with direct computation using the definitions of $U_1$ and $U_2$. Consequently, we obtain that the matrix $B$ admits the form
\begin{align*}
    B = Q_{p_1 p_2} - U_1 Q_{p_1} U_1^{\top} - U_2 Q_{p_2} U_2^{\top},
\end{align*}
where direct computation reveals that $U_1 Q_{p_1} U_1^{\top} + U_2 Q_{p_2} U_2^{\top} = L_1 + L_2 - 2 P_{p_1 p_2}$, giving the alternate form
\begin{align}\label{eq:simplified_B}
    B = I_{p_1^2 p_2^2} - L_1 - L_2 + L_1 L_2.
\end{align}

The proof is now complete once we show that $G_1 B G_1 G_1 B G_1 = G_1 B G_1$, $G_2 B G_2 G_2 B G_2 = G_2 B G_2$, $G_1 B G_2 = G_2 B G_1 = 0$, $B L_1 = B L_2 = 0$ and compute the traces of $G_1 B G_1$ and $G_2 B G_2$. Matrix multiplication using the simplified form of $B$ and our earlier ``multiplication table'' gives both $G_1 B G_2 = G_2 B G_1 = 0$ and
\begin{align*}
    G_2 B G_2 = G_2 Q_{p_1 p_2} G_2 = G_2 (I_{p_1^2 p_2^2} - L) G_2 = G_2,   
\end{align*}
whose trace is easily seen to equal the claimed value. Next, we tackle the matrix $G_1 B G_1$, which is seen to be an orthogonal projection once we show that $B G_1 = B G_1 B G_1$. The matrix $B G_1$ can be seen to take the form
\begin{align*}
     \frac{1}{4} (I_{p_1^2 p_2^2} + K^\otimes_1 + K^\otimes_2 + K^\otimes_1 K^\otimes_2 - 2 L_1 - 2 L_2 - 2 K^\otimes_1 L_2 - 2 K^\otimes_2 L_1 + 4 L_1 L_2),
\end{align*}
from which the desired involution property can be verified. The same formula also allows us to verify that the trace $\mathrm{tr}(G_1 B G_1) = \mathrm{tr}(B G_1)$ equals $(p_1 + 2)(p_1 - 1)(p_2 + 2)(p_2 - 1)/4$. Finally, $B L_1 = B L_2 = 0$ follows immediately from the form~\eqref{eq:simplified_B}, concluding the proof.





\end{proof}

\begin{proof}[Proof of Lemma \ref{lem:m2_and_m4}]
Straightforward computation reveals that $\mathrm{E}\| Z \|_F^2 = p_1 p_2 \beta_F^2$. For the fourth moment $\mathrm{E}\| Z \|_F^4$, we first observe that $\mathrm{tr}(ZZ^{\top}) = h^{\top} \{ \mathrm{vec}^{\top}(Z) \otimes \mathrm{vec}^{\top}(Z) \}$ where
\begin{align*}
    h := \sum_{i = 1}^{p_1} \sum_{j = 1}^{p_2} \mathrm{vec}(e_i e_j^{\top}) \otimes \mathrm{vec}(e_i e_j^{\top}).
\end{align*}
Direct but tedious calculation then shows that $h^{\top} J_1 K^\otimes_2 h = h^{\top} J_1 h = p_1^2 p_2$, $h^{\top} J_2 K^\otimes_1 h = h^{\top} J_2 h = p_1 p_2^2$, $h^{\top} K^\otimes_1 h = h^{\top} K^\otimes_2 h = h^{\top}h = h^{\top} K^\otimes_1 K^\otimes_2 h = p_1 p_2$ and $ h^{\top} J_1 J_2 h = p_1^2 p_2^2$. Using Theorem \ref{theo:fourth_moment}, we thus obtain
\begin{align*}
    \frac{1}{p_1 p_2}\mathrm{E}\| Z \|_F^4 = \frac{1}{p_1 p_2} h^{\top} A h = ( m_2 - m_4 ) (p_1 + p_2 + 1) + m_4 (p_1 p_2 + 2) = m_2 (p_1 + p_2 + 1) + m_4 (p_1 - 1)(p_2 - 1).
\end{align*}
For the third claim, we note that, for all $j, k$ we have $\mathrm{E} (z_{jk}^4) = \mathrm{E} (z_{11}^4) = 3 m_2$ where the final equality uses Lemma~\ref{lem:moment_connections}.
\end{proof}

\begin{proof}[Proof of Theorem \ref{theo:extra_theorem}]
    By Corollary \ref{cor:bahadur_6} and the proof of Theorem \ref{theo:main_limiting_2}, the limiting covariance matrix of $\sqrt{n} \mathrm{vec}(V_n - I_{p_1 p_2})$ is $\Psi = \beta_F^{-2} B_0 \Omega B_0'$,
    where $\Omega$ is defined in the proof of Theorem \ref{theo:main_limiting_2} and
    \begin{align*}
        B_0 := (I_{p_2} \otimes K_{p_1, p_2} \otimes I_{p_1}) \{ R_2 \otimes \mathrm{vec}(I_{p_1}) + \mathrm{vec}(I_{p_2}) \otimes R_1 \} - Q_{p_1 p_2}.
    \end{align*}
    Defining next the symmetric matrix $H$ as in the proof of Theorem \ref{theo:main_limiting_2}, it can be shown with the techniques used in that proof that $H B_0' = B_0'$. Using this and the proof of Theorem \ref{theo:main_limiting_2}, we obtain that
    \begin{align*}
        \Psi = \frac{1}{\beta_F^{2}} B_0 H \Omega H B_0' = \frac{m_4 + m_2}{\beta_F^2} B_0 G_1 B_0' + \frac{3 m_4 - m_2}{\beta_F^2} B_0 G_2 B_0',
    \end{align*}
    where $G_1, G_2$ are as in the proof of Theorem \ref{theo:main_limiting_2}. Arguing then again as in the proof of Theorem \ref{theo:main_limiting_2}, we may show that $B_0 G_1 B_0'$ and $B_0 G_2 B_0'$ are orthogonal projection matrices that are also mutually orthogonal, $B_0 G_1 B_0' B_0 G_2 B_0' = 0$. Hence, the Moore-Penrose pseudoinverse of $\Psi$ is
    \begin{align*}
        \Psi^{+} = \frac{\beta_F^2}{m_4 + m_2} B_0 G_1 B_0' + \frac{\beta_F^2}{3 m_4 - m_2} B_0 G_2 B_0',
    \end{align*}
    and, by the discussion following Lemma \ref{lem:m2_and_m4}, the matrix $\hat{\Upsilon}$ is thus a consistent estimator of $\Psi^{+}$. From this it follows that the limiting distribution of $w_n$ is the same as that of $y' \Psi^{1/2} \Psi^{+} \Psi^{1/2} y$, where $y$ has a standard $p_1^2 p_2^2$-variate normal distribution, i.e., a $\chi^2$-distribution with degrees of freedom equal to $\mathrm{rank}(\Psi)$. That this equals
    \begin{align*}
        \frac{1}{4}(p_1 + 2)(p_1 - 1)(p_2 + 2)(p_2 - 1) + \frac{1}{4} p_1 p_2 (p_1 - 1) (p_2 - 1) = \frac{1}{2} (p_1^2 - 1) (p_2^2 - 1) + \frac{1}{2} (p_1 - 1) (p_2 - 1)
    \end{align*}
    can be shown as in the proof of Theorem \ref{theo:main_limiting_2}.

\end{proof}

\bibliographystyle{elsarticle-harv} 
\bibliography{references}

\begin{thebibliography}{33}
\expandafter\ifx\csname natexlab\endcsname\relax\def\natexlab#1{#1}\fi
\providecommand{\url}[1]{\texttt{#1}}
\providecommand{\href}[2]{#2}
\providecommand{\path}[1]{#1}
\providecommand{\DOIprefix}{doi:}
\providecommand{\ArXivprefix}{arXiv:}
\providecommand{\URLprefix}{URL: }
\providecommand{\Pubmedprefix}{pmid:}
\providecommand{\doi}[1]{\href{http://dx.doi.org/#1}{\path{#1}}}
\providecommand{\Pubmed}[1]{\href{pmid:#1}{\path{#1}}}
\providecommand{\bibinfo}[2]{#2}
\ifx\xfnm\relax \def\xfnm[#1]{\unskip,\space#1}\fi
\bibitem[{Arashi(2023)}]{arashi2023some}
\bibinfo{author}{Arashi, M.}, \bibinfo{year}{2023}.
\newblock \bibinfo{title}{Some theoretical results on the tensor elliptical distribution}.
\newblock \bibinfo{journal}{Journal of Data Science and Modeling} \bibinfo{volume}{1}, \bibinfo{pages}{29--41}.
\bibitem[{Brunel(2023)}]{brunel2023geodesically}
\bibinfo{author}{Brunel, V.E.}, \bibinfo{year}{2023}.
\newblock \bibinfo{title}{Geodesically convex {$ M $}-estimation in metric spaces}, in: \bibinfo{booktitle}{The Thirty Sixth Annual Conference on Learning Theory}, \bibinfo{organization}{PMLR}. pp. \bibinfo{pages}{2188--2210}.
\bibitem[{Caro-Lopera et~al.(2016)Caro-Lopera, Far{\'\i}as and Balakrishnan}]{caro2016matrix}
\bibinfo{author}{Caro-Lopera, F.J.}, \bibinfo{author}{Far{\'\i}as, G.G.}, \bibinfo{author}{Balakrishnan, N.}, \bibinfo{year}{2016}.
\newblock \bibinfo{title}{Matrix-variate distribution theory under elliptical models-4: Joint distribution of latent roots of covariance matrix and the largest and smallest latent roots}.
\newblock \bibinfo{journal}{Journal of Multivariate Analysis} \bibinfo{volume}{145}, \bibinfo{pages}{224--235}.
\bibitem[{D{\'\i}az-Garc{\'\i}a and Caro-Lopera(2012)}]{diaz2012statistical}
\bibinfo{author}{D{\'\i}az-Garc{\'\i}a, J.A.}, \bibinfo{author}{Caro-Lopera, F.J.}, \bibinfo{year}{2012}.
\newblock \bibinfo{title}{Statistical theory of shape under elliptical models and singular value decompositions}.
\newblock \bibinfo{journal}{Journal of Multivariate Analysis} \bibinfo{volume}{103}, \bibinfo{pages}{77--92}.
\bibitem[{D{\'\i}az-Garc{\'\i}a and Caro-Lopera(2021)}]{diaz2021matrix}
\bibinfo{author}{D{\'\i}az-Garc{\'\i}a, J.A.}, \bibinfo{author}{Caro-Lopera, F.J.}, \bibinfo{year}{2021}.
\newblock \bibinfo{title}{Matrix variate {B}irnbaum--{S}aunders distribution under elliptical models}.
\newblock \bibinfo{journal}{Journal of Statistical Planning and Inference} \bibinfo{volume}{210}, \bibinfo{pages}{100--113}.
\bibitem[{D{\'\i}az-Garc{\'\i}a and Caro-Lopera(2023)}]{diaz2023singular}
\bibinfo{author}{D{\'\i}az-Garc{\'\i}a, J.A.}, \bibinfo{author}{Caro-Lopera, F.J.}, \bibinfo{year}{2023}.
\newblock \bibinfo{title}{Singular matrix variate {B}irnbaum-{S}aunders distribution under elliptical models}.
\newblock \bibinfo{journal}{Communications in Statistics - Theory and Methods} \bibinfo{volume}{52}, \bibinfo{pages}{1653--1667}.
\bibitem[{Drton et~al.(2021)Drton, Kuriki and Hoff}]{drton2021existence}
\bibinfo{author}{Drton, M.}, \bibinfo{author}{Kuriki, S.}, \bibinfo{author}{Hoff, P.}, \bibinfo{year}{2021}.
\newblock \bibinfo{title}{Existence and uniqueness of the {K}ronecker covariance {MLE}}.
\newblock \bibinfo{journal}{Annals of Statistics} \bibinfo{volume}{49}, \bibinfo{pages}{2721--2754}.
\bibitem[{Duchesne and {de Micheaux}(2010)}]{Rcompquadform}
\bibinfo{author}{Duchesne, P.}, \bibinfo{author}{{de Micheaux}, P.L.}, \bibinfo{year}{2010}.
\newblock \bibinfo{title}{Computing the distribution of quadratic forms: Further comparisons between the liu-tang-zhang approximation and exact methods}.
\newblock \bibinfo{journal}{Computational Statistics and Data Analysis} \bibinfo{volume}{54}, \bibinfo{pages}{858--862}.
\bibitem[{D{\"u}mbgen and Tyler(2016)}]{duembgen2016geodesic}
\bibinfo{author}{D{\"u}mbgen, L.}, \bibinfo{author}{Tyler, D.E.}, \bibinfo{year}{2016}.
\newblock \bibinfo{title}{Geodesic convexity and regularized scatter estimators}.
\newblock \bibinfo{journal}{arXiv preprint arXiv:1607.05455} .
\bibitem[{Fang et~al.(1990)Fang, Kotz and Ng}]{fang1990symmetric}
\bibinfo{author}{Fang, K.T.}, \bibinfo{author}{Kotz, S.}, \bibinfo{author}{Ng, K.W.}, \bibinfo{year}{1990}.
\newblock \bibinfo{title}{Symmetric Multivariate and Related Distributions}.
\newblock \bibinfo{publisher}{Chapman and Hall/CRC}.
\bibitem[{Filipiak et~al.(2016)Filipiak, Klein and Roy}]{filipiak2016score}
\bibinfo{author}{Filipiak, K.}, \bibinfo{author}{Klein, D.}, \bibinfo{author}{Roy, A.}, \bibinfo{year}{2016}.
\newblock \bibinfo{title}{Score test for a separable covariance structure with the first component as compound symmetric correlation matrix}.
\newblock \bibinfo{journal}{Journal of Multivariate Analysis} \bibinfo{volume}{150}, \bibinfo{pages}{105--124}.
\bibitem[{Franks et~al.(2021)Franks, Oliveira, Ramachandran and Walter}]{franks2021near}
\bibinfo{author}{Franks, C.}, \bibinfo{author}{Oliveira, R.}, \bibinfo{author}{Ramachandran, A.}, \bibinfo{author}{Walter, M.}, \bibinfo{year}{2021}.
\newblock \bibinfo{title}{Near optimal sample complexity for matrix and tensor normal models via geodesic convexity}.
\newblock \bibinfo{journal}{arXiv preprint arXiv:2110.07583} .
\bibitem[{Guggenberger et~al.(2023)Guggenberger, Kleibergen and Mavroeidis}]{guggenberger2023test}
\bibinfo{author}{Guggenberger, P.}, \bibinfo{author}{Kleibergen, F.}, \bibinfo{author}{Mavroeidis, S.}, \bibinfo{year}{2023}.
\newblock \bibinfo{title}{A test for {K}ronecker product structure covariance matrix}.
\newblock \bibinfo{journal}{Journal of Econometrics} \bibinfo{volume}{233}, \bibinfo{pages}{88--112}.
\bibitem[{Gupta and Nagar(2018)}]{gupta2018matrix}
\bibinfo{author}{Gupta, A.K.}, \bibinfo{author}{Nagar, D.K.}, \bibinfo{year}{2018}.
\newblock \bibinfo{title}{Matrix Variate Distributions}.
\newblock \bibinfo{publisher}{Chapman and Hall/CRC}.
\bibitem[{Gupta et~al.(2013)Gupta, Varga and Bodnar}]{gupta2013elliptically}
\bibinfo{author}{Gupta, A.K.}, \bibinfo{author}{Varga, T.}, \bibinfo{author}{Bodnar, T.}, \bibinfo{year}{2013}.
\newblock \bibinfo{title}{Elliptically Contoured Models in Statistics and Portfolio Theory}.
\newblock \bibinfo{publisher}{Springer}.
\bibitem[{Hoff et~al.(2023)Hoff, McCormack and Zhang}]{hoff2023core}
\bibinfo{author}{Hoff, P.}, \bibinfo{author}{McCormack, A.}, \bibinfo{author}{Zhang, A.R.}, \bibinfo{year}{2023}.
\newblock \bibinfo{title}{Core shrinkage covariance estimation for matrix-variate data}.
\newblock \bibinfo{journal}{Journal of the Royal Statistical Society Series B: Statistical Methodology} \bibinfo{volume}{85}, \bibinfo{pages}{1659--1679}.
\bibitem[{Kim et~al.(2023)Kim, Park, Lim and Lee}]{kim2023robust}
\bibinfo{author}{Kim, S.}, \bibinfo{author}{Park, S.}, \bibinfo{author}{Lim, J.}, \bibinfo{author}{Lee, S.H.}, \bibinfo{year}{2023}.
\newblock \bibinfo{title}{Robust tests for scatter separability beyond {G}aussianity}.
\newblock \bibinfo{journal}{Computational Statistics \& Data Analysis} \bibinfo{volume}{179}, \bibinfo{pages}{107633}.
\bibitem[{Lee(2018)}]{lee2018introduction}
\bibinfo{author}{Lee, J.M.}, \bibinfo{year}{2018}.
\newblock \bibinfo{title}{Introduction to Riemannian Manifolds}. volume~\bibinfo{volume}{2}.
\newblock \bibinfo{publisher}{Springer}.
\bibitem[{Lu and Zimmerman(2005)}]{lu2005likelihood}
\bibinfo{author}{Lu, N.}, \bibinfo{author}{Zimmerman, D.L.}, \bibinfo{year}{2005}.
\newblock \bibinfo{title}{The likelihood ratio test for a separable covariance matrix}.
\newblock \bibinfo{journal}{Statistics \& Probability Letters} \bibinfo{volume}{73}, \bibinfo{pages}{449--457}.
\bibitem[{Magnus and Neudecker(1985)}]{magnus1985matrix}
\bibinfo{author}{Magnus, J.R.}, \bibinfo{author}{Neudecker, H.}, \bibinfo{year}{1985}.
\newblock \bibinfo{title}{Matrix differential calculus with applications to simple, {H}adamard, and {K}ronecker products}.
\newblock \bibinfo{journal}{Journal of Mathematical Psychology} \bibinfo{volume}{29}, \bibinfo{pages}{474--492}.
\bibitem[{Manceur and Dutilleul(2013)}]{manceur2013maximum}
\bibinfo{author}{Manceur, A.M.}, \bibinfo{author}{Dutilleul, P.}, \bibinfo{year}{2013}.
\newblock \bibinfo{title}{Maximum likelihood estimation for the tensor normal distribution: Algorithm, minimum sample size, and empirical bias and dispersion}.
\newblock \bibinfo{journal}{Journal of Computational and Applied Mathematics} \bibinfo{volume}{239}, \bibinfo{pages}{37--49}.
\bibitem[{Mitchell et~al.(2005)Mitchell, Genton and Gumpertz}]{mitchell2005testing}
\bibinfo{author}{Mitchell, M.W.}, \bibinfo{author}{Genton, M.G.}, \bibinfo{author}{Gumpertz, M.L.}, \bibinfo{year}{2005}.
\newblock \bibinfo{title}{Testing for separability of space--time covariances}.
\newblock \bibinfo{journal}{Environmetrics: The Official Journal of the International Environmetrics Society} \bibinfo{volume}{16}, \bibinfo{pages}{819--831}.
\bibitem[{Park et~al.(2019)Park, Lim, Wang and Lee}]{park2019permutation}
\bibinfo{author}{Park, S.}, \bibinfo{author}{Lim, J.}, \bibinfo{author}{Wang, X.}, \bibinfo{author}{Lee, S.}, \bibinfo{year}{2019}.
\newblock \bibinfo{title}{Permutation based testing on covariance separability}.
\newblock \bibinfo{journal}{Computational Statistics} \bibinfo{volume}{34}, \bibinfo{pages}{865--883}.
\bibitem[{Roy and Khattree(2005)}]{roy2005implementation}
\bibinfo{author}{Roy, A.}, \bibinfo{author}{Khattree, R.}, \bibinfo{year}{2005}.
\newblock \bibinfo{title}{On implementation of a test for {K}ronecker product covariance structure for multivariate repeated measures data}.
\newblock \bibinfo{journal}{Statistical Methodology} \bibinfo{volume}{2}, \bibinfo{pages}{297--306}.
\bibitem[{Shinozaki et~al.(2018)Shinozaki, Hashiguchi and Iwashita}]{shinozaki2018distribution}
\bibinfo{author}{Shinozaki, A.}, \bibinfo{author}{Hashiguchi, H.}, \bibinfo{author}{Iwashita, T.}, \bibinfo{year}{2018}.
\newblock \bibinfo{title}{Distribution of the largest eigenvalue of an elliptical {W}ishart matrix and its simulation}.
\newblock \bibinfo{journal}{Journal of the Japanese Society of Computational Statistics} \bibinfo{volume}{30}, \bibinfo{pages}{1--12}.
\bibitem[{Simpson(2010)}]{simpson2010adjusted}
\bibinfo{author}{Simpson, S.L.}, \bibinfo{year}{2010}.
\newblock \bibinfo{title}{An adjusted likelihood ratio test for separability in unbalanced multivariate repeated measures data}.
\newblock \bibinfo{journal}{Statistical Methodology} \bibinfo{volume}{7}, \bibinfo{pages}{511--519}.
\bibitem[{Soloveychik and Trushin(2016)}]{soloveychik2016gaussian}
\bibinfo{author}{Soloveychik, I.}, \bibinfo{author}{Trushin, D.}, \bibinfo{year}{2016}.
\newblock \bibinfo{title}{Gaussian and robust {K}ronecker product covariance estimation: Existence and uniqueness}.
\newblock \bibinfo{journal}{Journal of Multivariate Analysis} \bibinfo{volume}{149}, \bibinfo{pages}{92--113}.
\bibitem[{Sung and Hoff(2025)}]{sung2025testing}
\bibinfo{author}{Sung, B.}, \bibinfo{author}{Hoff, P.D.}, \bibinfo{year}{2025}.
\newblock \bibinfo{title}{Testing separability of high-dimensional covariance matrices}.
\newblock \bibinfo{journal}{arXiv preprint arXiv:2506.17463} .
\bibitem[{Van~der Vaart(2000)}]{van2000asymptotic}
\bibinfo{author}{Van~der Vaart, A.W.}, \bibinfo{year}{2000}.
\newblock \bibinfo{title}{Asymptotic Statistics}.
\newblock \bibinfo{publisher}{Cambridge University Press}.
\bibitem[{Van~Loan and Pitsianis(1993)}]{van1993approximations}
\bibinfo{author}{Van~Loan, C.}, \bibinfo{author}{Pitsianis, N.}, \bibinfo{year}{1993}.
\newblock \bibinfo{title}{Approximations with {K}ronecker products}.
\newblock \bibinfo{journal}{Linear Algebra for Large Scale and Real-Time Apprications} , \bibinfo{pages}{293--314}.
\bibitem[{Van~Niekerk et~al.(2016)Van~Niekerk, Bekker, Arashi and De~Waal}]{van2016estimation}
\bibinfo{author}{Van~Niekerk, J.}, \bibinfo{author}{Bekker, A.}, \bibinfo{author}{Arashi, M.}, \bibinfo{author}{De~Waal, D.J.}, \bibinfo{year}{2016}.
\newblock \bibinfo{title}{Estimation under the matrix variate elliptical model}.
\newblock \bibinfo{journal}{South African Statistical Journal} \bibinfo{volume}{50}, \bibinfo{pages}{149--171}.
\bibitem[{Werner et~al.(2008)Werner, Jansson and Stoica}]{werner2008estimation}
\bibinfo{author}{Werner, K.}, \bibinfo{author}{Jansson, M.}, \bibinfo{author}{Stoica, P.}, \bibinfo{year}{2008}.
\newblock \bibinfo{title}{On estimation of covariance matrices with {Kronecker} product structure}.
\newblock \bibinfo{journal}{IEEE Transactions on Signal Processing} \bibinfo{volume}{56}, \bibinfo{pages}{478--491}.
\bibitem[{Wiesel(2012)}]{wiesel2012geodesic}
\bibinfo{author}{Wiesel, A.}, \bibinfo{year}{2012}.
\newblock \bibinfo{title}{Geodesic convexity and covariance estimation}.
\newblock \bibinfo{journal}{IEEE Transactions on Signal Processing} \bibinfo{volume}{60}, \bibinfo{pages}{6182--6189}.

\end{thebibliography}



\end{document}